\documentclass{amsart}
\newtheorem{theorem}{Theorem}[section]
\newtheorem{lemma}[theorem]{Lemma}

\theoremstyle{definition}

\theoremstyle{remark}
\newtheorem{remark}[theorem]{Remark}
\usepackage{amssymb, graphicx, amsmath,epstopdf,bm}
\usepackage{amsthm}
\usepackage{algorithm}
\usepackage{algpseudocode}
\usepackage{cases}
\usepackage{amsfonts,amsmath,amsthm,amsfonts,amssymb}
\usepackage{graphics,graphicx,subfigure,color}
\numberwithin{equation}{section}
\allowdisplaybreaks[4]
\begin{document}

% \title[short text for running head]{full title}
\title[Bilinear Optimal Control of an Advection-reaction-diffusion System]{Bilinear Optimal Control of an Advection-reaction-diffusion System}

%    Only \author and \address are required; other information is
%    optional.  Remove any unused author tags.

%    author one information
% \author[short version for running head]{name for top of paper}
\author[R. Glowinski]{Roland Glowinski}
\address{Department of Mathematics, University of Houston, 44800 Calhoun Road, Houston, TX 77204, USA. }
\curraddr{}
\email{roland@math.uh.edu}
\thanks{The work of R.G is supported by the US Department of Energy (ORNL) and the Hong Kong based Kennedy Wong Foundation. The work of X.Y is supported by the General Research Fund 12302318 from the Hong Kong Research Grants Council.}

%    author two information
\author[Y. Song]{Yongcun Song}
\address{Department of Mathematics, The University of Hong Kong, Pok Fu Lam, Hong Kong, PRC.}
\curraddr{}
\email{ysong307$@$hku.hk.}
\thanks{}

\author[X. Yuan]{Xiaoming Yuan}
\address{Department of Mathematics, The University of Hong Kong, Pok Fu Lam, Hong Kong, PRC.}
\curraddr{}
\email{xmyuan$@$hku.hk.}
\thanks{}

\author[H. Yue]{Hangrui Yue}
\address{Department of Mathematics, The University of Hong Kong, Pok Fu Lam, Hong Kong, PRC.}
\curraddr{}
\email{yuehangrui$@$gmail.com.}
\thanks{}
%    \subjclass is required.
\subjclass[2020]{49M41,35Q93,49J20}
\keywords{Bilinear optimal control, advection-reaction-diffusion system, conjugate gradient method, nested iteration, finite element method, finite difference method}
\date{January 7, 2021}
\dedicatory{To the memory of J. L. Lions (1928-2001) who suggested investigating problems like (BCP) below.}

   % Abstract is required.
\begin{abstract}
We consider the bilinear optimal control of an advection-reaction-diffusion system, where the control arises as the velocity field in the advection term. Such a problem is generally challenging from both theoretical analysis and algorithmic design perspectives mainly because the state variable depends nonlinearly on the control variable and an additional divergence-free constraint on the control is coupled together with the state equation.
 Mathematically, the proof of the existence of optimal solutions is delicate, and up to now, only some results are known for a few special cases where additional restrictions are imposed on the space dimension and the regularity of the control.  We prove the existence of optimal controls and derive the first-order optimality conditions in general settings without any extra assumption. Computationally, the well-known conjugate gradient (CG) method can be applied conceptually. However, due to the additional divergence-free constraint on the control variable and the nonlinear relation between the state and control variables, it is challenging to compute the gradient and the optimal stepsize at each CG iteration, and thus nontrivial to implement the CG method. To address these issues, we advocate a fast inner preconditioned CG method to ensure the  divergence-free constraint and an efficient inexactness strategy to determine an appropriate stepsize. An easily implementable nested CG method is thus proposed for solving such a complicated problem. For the numerical discretization, we combine finite difference methods for the time discretization and finite element methods for the space discretization. Efficiency of the proposed nested CG method is promisingly validated by the results of some preliminary numerical experiments.
\end{abstract}

\maketitle
\section{Introduction}
\subsection{Background and Motivation}
The optimal control of distributed parameter systems has important applications in various scientific areas, such as physics, chemistry, engineering, medicine, and finance. We refer to, e.g. \cite{glowinski1994exact, glowinski1995exact, glowinski2008exact, lions1971optimal, troltzsch2010optimal,zuazua2006}, for a few references. In a typical mathematical model of a controlled distributed parameter system, either boundary or internal locally distributed controls are usually used; these controls  have localized support and  are called additive controls because they arise in the model equations as additive terms.   Optimal control problems with additive controls have received
a significant attention in past decades following the pioneering work of J. L. Lions \cite{lions1971optimal}, and many mathematical and computational tools have been developed, see e.g., \cite{glowinski1994exact,glowinski1995exact,glowinski2008exact,lions1988,zuazua2005,zuazua2007}. However, it is worth noting that additive controls describe the effect of external added sources or forces and they do not change the principal intrinsic properties of the controlled system. Hence, they are not
suitable to deal with processes whose principal intrinsic properties should be changed by some control actions. For instance, if we aim at changing the reaction rate in some chain reaction-type processes from biomedical, nuclear, and chemical applications, additive controls amount to controlling the chain reaction by
adding into or withdrawing out of a certain amount of the reactants, which is not realistic. To address this issue, a natural idea is to use certain catalysts or smart materials to control the systems, which can be mathematically modeled by optimal control problems with bilinear controls. We refer to \cite{khapalov2010} for more detailed discussions.

Bilinear controls, also known as multiplicative controls, enter the model as coefficients of the corresponding partial differential equations (PDEs). These bilinear controls can change some
 main physical
characteristics of the system under investigation, such as a natural frequency response of a beam or the rate of a chemical reaction. In the literature, bilinear controls of
distributed parameter systems have become an increasingly popular topic and bilinear optimal control problems constrained by various PDEs, such as elliptic equations \cite{kroner2009}, convection-diffusion equations \cite{borzi2015},
parabolic equations \cite{khapalov2003}, the Schr{\"o}dinger equation \cite{kunisch2007} and the Fokker-Planck equation \cite{fleig2017}, have been widely studied both mathematically and computationally.

In particular, bilinear controls play a crucial role in optimal control problems modeled by advection-reaction-diffusion systems. 
On one hand, the control can be the coefficient of the diffusion or the reaction term. For instance, a system controlled by the so-called catalysts that can accelerate or
slow down various chemical or biological reactions can be modeled by a bilinear optimal control problem for an advection-reaction-diffusion equation  where the control arises as the coefficient of the reaction term \cite{khapalov2003}; this kind of bilinear optimal control problems have been studied in e.g., \cite{borzi2015,cannarsa2017,khapalov2003,khapalov2010}. 
On the other hand, the systems can also be controlled by the velocity field in the advection term, which captures important applications in e.g., bioremediation \cite{lenhart1998}, environmental remediation process \cite{lenhart1995}, and mixing enhancement of different fluids \cite{liu2008}. We note that there is a very limited research being done on the velocity field controlled bilinear optimal control problems;  and only some special one-dimensional space cases have been studied in \cite{lenhart1998,joshi2005,lenhart1995} for the existence of an optimal control and the derivation of first-order optimality conditions.  To the best of our knowledge, no work has been done yet to develop efficient numerical methods for solving multi-dimensional bilinear optimal control problems controlled by the velocity field in the advection term.
All these facts motivate us to study bilinear optimal control problems constrained by an advection-reaction-diffusion equation, where the control enters into the model as the velocity field in the advection term. Actually, investigating this kind of problems was suggested to one of us (R. Glowinski), in the late 1990's, by J. L. Lions (1928-2001).

\subsection{Model}
Let $\Omega$ be a bounded domain of $\mathbb{R}^d$ with $d\geq 1$ and let $\Gamma$ be its
boundary. We consider the following bilinear optimal control problem:
\begin{flalign}\tag{BCP}
& \left\{
\begin{aligned}
& \bm{u}\in \mathcal{U}, \\
&J(\bm{u})\leq J(\bm{v}), \forall \bm{v}\in \mathcal{U},
\end{aligned}
\right.
\end{flalign}
with the objective functional $J$ defined by
\begin{equation}\label{objective_functional}
J(\bm{v})=\frac{1}{2}\iint_Q|\bm{v}|^2dxdt+\frac{\alpha_1}{2}\iint_Q|y-y_d|^2dxdt+\frac{\alpha_2}{2}\int_\Omega|y(T)-y_T|^2dx,
\end{equation}
and $y=y(t;\bm{v})$ the solution of the
following advection-reaction-diffusion equation
\begin{flalign}\label{state_equation}
& \left\{
\begin{aligned}
\frac{\partial y}{\partial t}-\nu \nabla^2y+\bm{v}\cdot \nabla y+a_0y&=f\quad \text{in}\quad Q, \\
y&=g\quad \text{on}\quad \Sigma,\\
y(0)&=\phi.
\end{aligned}
\right.
\end{flalign}
Above and below, $Q=\Omega\times (0,T)$ and $\Sigma=\Gamma\times (0,T)$ with $0<T<+\infty$; $\alpha_1\geq 0, \alpha_2\geq 0, \alpha_1+\alpha_2>0$; the target functions $y_d$ and $y_T$ are given in $L^2(Q)$ and $L^2(\Omega)$, respectively; the diffusion coefficient $\nu>0$ and the reaction coefficient $a_0$ are assumed to be constants; the functions $f\in L^2(Q)$, $g\in L^2(0,T;H^{1/2}(\Gamma))$ and $\phi\in L^2(\Omega)$. The set $\mathcal{U}$ of the admissible controls is defined by
\begin{equation*}
\mathcal{U}:=\{\bm{v}|\bm{v}\in [L^2(Q)]^d, \nabla\cdot\bm{v}=0\}.
\end{equation*}
Clearly, the control variable $\bm{v}$ arises in (BCP) as a flow velocity field in the advection term of (\ref{state_equation}), and the divergence-free constraint $\nabla\cdot\bm{v}=0$ implies that the flow is incompressible. One can control the system by changing the flow velocity $\bm{v}$ in order that $y$ and $y(T)$ are good approximations to $y_d$ and $y_T$, respectively.  

\subsection{Difficulties and Goals}

In this paper, we intend to study the bilinear optimal control problem (BCP) in the general case of $d\geq 2$ both mathematically and computationally. Precisely, we first study the well-posedness of (\ref{state_equation}), the existence of an optimal control $\bm{u}$, and its first-order optimality condition. Then, computationally,
we propose an efficient and relatively easy to implement numerical method to solve (BCP). For this purpose, we advocate combining a conjugate gradient (CG) method with a finite difference method (for the time discretization) and a finite element method (for the space discretization) for the numerical solution of (BCP). Although these numerical approaches have been well developed in the literature, it is nontrivial to implement them to solve (BCP) as discussed below, due to the complicated problem settings.

\subsubsection{Difficulties in Algorithmic Design}
Conceptually, a CG method for solving (BCP) can be easily derived following \cite{glowinski2008exact}. However, CG algorithms are challenging to implement
numerically for the following reasons: 1). The state $y$ depends non-linearly on the control $\bm{v}$ despite the fact that the state equation (\ref{state_equation}) is linear. 2). The additional divergence-free constraint on the control $\bm{v}$, i.e., $\nabla\cdot\bm{v}=0$, is coupled together with the state equation (\ref{state_equation}). 

To be more precise, the fact that the state $y$ is a nonlinear function of the control $\bm{v}$ makes the optimality 
system a nonlinear problem. Hence, seeking a suitable stepsize in each CG iteration requires solving an optimization problem and it can not be as easily computed as in the linear case \cite{glowinski2008exact}. Note that commonly used line search strategies are too expensive to employ in our settings because they require evaluating the objective functional value $J(\bm{v})$ repeatedly and every evaluation of $J(\bm{v})$ entails solving the state equation (\ref{state_equation}). The same concern on the computational cost also applies when the Newton method is employed to solve the corresponding optimization problem for finding a stepsize. To tackle this issue, we propose an efficient inexact stepsize strategy which requires solving only one additional linear parabolic problem and is cheap to implement as shown in Section \ref{se:cg}.

Furthermore, due to the divergence-free constraint $\nabla\cdot\bm{v}=0$, an extra projection onto the admissible set $\mathcal{U}$ is required to compute the first-order differential of $J$ at each CG iteration in order that all iterates of the CG method are feasible. Generally, this projection subproblem has no closed-form solution and has to be solved iteratively. Here, we introduce a Lagrange multiplier associated with the constraint $\nabla\cdot\bm{v}=0$, then the computation of the first-order differential $DJ(\bm{v})$ of $J$ at $\bm{v}$ is equivalent to solving a Stokes type problem. Inspired by \cite{glowinski2003}, we advocate employing a preconditioned CG method, which operates on the space of the Lagrange multiplier, to solve the resulting Stokes type problem. With an appropriately chosen preconditioner, a fast convergence of the resulting preconditioned CG method can be expected in practice (and indeed, has been observed). 

%this preconditioned CG method in continuous settings can converge in one iteration, and a fast convergence of its discrete analogue in practice can be expected. 

\subsubsection{Difficulties in Numerical Discretization}

For the numerical discretization of (BCP), we note that if an implicit finite difference scheme is used for the time discretization of the state equation (\ref{state_equation}), a stationary advection-reaction-diffusion equation should be solved at each time step.  To solve this stationary advection-reaction-diffusion equation, it is well known that standard finite element techniques may lead to strongly oscillatory solutions unless the mesh-size is sufficiently
small with respect to the ratio between $\nu$ and $\|\bm{v}\|$. In the context of optimal control problems, to overcome such difficulties, different stabilized finite element methods
have been proposed and analyzed, see e.g., \cite{BV07,DQ05}. Different from the above references, we implement the time discretization by a semi-implicit finite difference method for simplicity, namely, we use explicit advection and reaction terms and treat the diffusion term implicitly. Consequently, only a simple linear elliptic equation is required to be solved at each time step. We then implement the space discretization of the resulting elliptic equation at each time step by a standard piecewise linear finite element method and the resulting linear system is very easy to solve.

Moreover, we recall that the divergence-free constraint $\nabla\cdot \bm{v}=0$ leads to a projection subproblem, which is equivalent to a Stokes type problem, at each iteration of the CG algorithm. As discussed in \cite{glowinski1992}, to discretize a Stokes type problem, direct applications of standard finite element methods always lead to an ill-posed discrete problem. To overcome this difficulty, one can use different types of element approximations for pressure and velocity.
Inspired by \cite{glowinski1992,glowinski2003}, we employ the Bercovier-Pironneau finite element pair \cite{BP79} (also known as $P_1$-$P_1$ iso $P_2$ finite element) to approximate the control $\bm{v}$  and the Lagrange multiplier associated with the divergence-free constraint. More concretely, we approximate the Lagrange multiplier by a piecewise linear finite element space which is twice coarser than the one for the control $\bm{v}$.  In this way, the discrete problem is well-posed and can be solved by a preconditioned CG method. As a byproduct of the above discretization, the total number of degrees of freedom of the discrete Lagrange multiplier is only $\frac{1}{d2^d}$ of the number of the discrete control. Hence, the inner preconditioned CG method is implemented in a lower-dimensional space than that of the state equation (\ref{state_equation}), implying a computational cost reduction.  With the above mentioned discretization schemes, we can relatively easily obtain the fully discrete version of (BCP) and derive the discrete analogue of our proposed nested CG method.

\subsection{Organization}
An outline of this paper is as follows. In Section \ref{se:existence and oc}, we prove the existence of optimal controls for (BCP) and derive the associated first-order optimality conditions. An easily implementable nested CG method is proposed in Section \ref{se:cg} for solving  (BCP) numerically. In Section \ref{se:discretization}, we discuss the numerical discretization of (BCP) by finite difference and finite element methods. Some preliminary numerical results are reported in Section \ref{se:numerical} to validate the efficiency of our proposed numerical approach. Finally, some conclusions are drawn in Section \ref{se:conclusion}.

\section{Existence of optimal controls and first-order optimality conditions}\label{se:existence and oc}
In this section, first we present some notation and known results from the literature that will be used in later analysis. Then, we prove the existence of optimal controls for (BCP) and derive the associated first-order optimality conditions. Without loss of generality, we assume that $f=0$ and $g=0$ in (\ref{state_equation}) for convenience. 
\subsection{Preliminaries}
Throughout, we denote by $L^s(\Omega)$ and $H^s(\Omega)$ the
usual Sobolev spaces for any $s>0$. The space $H_0^s(\Omega)$ denotes the completion
of $C_0^{\infty}(\Omega)$ in $H^s(\Omega)$, where $C_0^{\infty}(\Omega)$ denotes the space of all infinitely differentiable
functions over $\Omega$ with a compact support in $\Omega$. In addition, we shall also use the following vector-valued function spaces:
\begin{eqnarray*}
	&&\bm{L}^2(\Omega):=[L^2(\Omega)]^d,\\
	&&\bm{L}_{div}^2(\Omega):=\{\bm{v}\in \bm{L}^2(\Omega),\nabla\cdot\bm{v}=0~\text{in}~\Omega\}.
\end{eqnarray*}
Let $X$ be a Banach space with a norm $\|\cdot\|_X$, then the space $L^2(0, T;X)$ consists of all measurable functions $z:(0,T)\rightarrow X$ satisfying
$$
\|z\|_{L^2(0, T;X)}:=\left(\int_0^T\|z(t)\|_X^2dt \right)^{\frac{1}{2}}<+\infty.
$$
%and the space $H^1(0, T;X)$ consists of all functions $\bm{v}\in L^2(0, T;X)$ such that $\frac{\partial\bm{v}}{\partial t}\in L^2(0, T;X)$ exists in the weak sense. The norm associated with $H^1(0, T;X)$ is defined as
%$$
%\|\bm{v}\|_{H^1(0, T;X)}=\left(\int_0^T\|\bm{v}\|_X^2+\|\frac{\partial\bm{v}}{\partial t}\|_X^2dt \right)^{\frac{1}{2}}.
%$$
With the above notation, it is clear that the admissible set $\mathcal{U}$ can be denoted as $\mathcal{U}:=L^2(0,T; \bm{L}_{div}^2(\Omega))$.
Moreover, the space $W(0,T)$ consists of all functions $z\in L^2(0, T; H_0^1(\Omega))$ such that $\frac{\partial z}{\partial t}\in L^2(0, T; H^{-1}(\Omega))$ exists in a weak sense, i.e.  
$$
W(0,T):=\{z|z\in L^2(0,T; H_0^1(\Omega)), \frac{\partial z}{\partial t}\in L^2(0,T; H^{-1}(\Omega))\},
$$
where $H^{-1}(\Omega)(=H_0^1(\Omega)^\prime)$ is the dual space of $H_0^1(\Omega)$.

Next, we summarize some known results for the advection-reaction-diffusion equation (\ref{state_equation}) in the literature for the convenience of further analysis. 

%We introduce the bilinear form $a(\cdot,\cdot)$ such that
%$$
%a(w,z)=\nu\iint_{\Omega\times(0,T)} \nabla w\cdot\nabla z dxdt+\iint_{\Omega\times(0,T)}\bm{v}\cdot\nabla wzdxdt+a_0\iint_{\Omega\times(0,T)} wzdxdt, \forall w,z\in L^2(0,T;H_0^1(\Omega)),
%$$
The variational formulation of the state equation (\ref{state_equation}) reads: find $y\in W(0,T)$ such that $y(0)=\phi$ and $\forall z\in L^2(0,T;H_0^1(\Omega))$,
\begin{equation}\label{weak_form}
\int_0^T\left\langle\frac{\partial y}{\partial t}, z \right\rangle_{H^{-1}(\Omega),H_0^1(\Omega)} dt+\nu\iint_{Q} \nabla y\cdot\nabla z dxdt+\iint_{Q}\bm{v}\cdot\nabla yzdxdt+a_0\iint_{Q} yzdxdt=0,
\end{equation}
where $\left\langle\cdot,\cdot\right\rangle_{H^{-1}(\Omega),H_0^1(\Omega)}$ denotes the duality pairing between $H^{-1}(\Omega)$ and $H_0^1(\Omega)$.
The existence and uniqueness of the solution $y\in W(0,T)$ to problem (\ref{weak_form}) can be proved by standard arguments relying on the Lax-Milgram theorem, we refer to \cite{lions1971optimal} for the details. Moreover, we can define the control-to-state operator $S:\mathcal{U}\rightarrow W(0,T)$, which maps $\bm{v}$ to $y=S(\bm{v})$. Then, the objective functional $J$ in (BCP) can be reformulated as 
\begin{equation*}
J(\bm{v})=\frac{1}{2}\iint_Q|\bm{v}|^2dxdt+\frac{\alpha_1}{2}\iint_Q|S(\bm{v})-y_d|^2dxdt+\frac{\alpha_2}{2}\int_\Omega|S(\bm{v})(T)-y_T|^2dx,
\end{equation*}
and the nonlinearity of the solution operator $S$ implies that (BCP) is nonconvex.

For the solution $y\in W(0,T)$, we have the following estimates.

\begin{lemma}
	Let $\bm{v}\in L^2(0,T; \bm{L}^2_{div}(\Omega))$, then the solution $y\in W(0,T)$ of the state equation (\ref{state_equation}) satisfies the following estimate:
	\begin{equation}\label{est_y}
	\|y(t)\|_{L^2(\Omega)}^2+2\nu\int_0^t\|\nabla y(s)\|_{L^2(\Omega)}^2ds+2a_0\int_0^t\| y(s)\|_{L^2(\Omega)}^2ds=\|\phi\|_{L^2(\Omega)}^2.
	\end{equation}
\end{lemma}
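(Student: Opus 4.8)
The plan is to derive \eqref{est_y} as an energy identity obtained by testing the weak formulation \eqref{weak_form} against the solution $y$ itself. Since $y\in W(0,T)$, I would first invoke the standard regularity result (see e.g. Lions--Magenes) that for any $z\in W(0,T)$ the map $t\mapsto \|z(t)\|_{L^2(\Omega)}^2$ is absolutely continuous on $[0,T]$ and satisfies
\begin{equation*}
\frac{d}{dt}\|z(t)\|_{L^2(\Omega)}^2 = 2\left\langle \frac{\partial z}{\partial t}(t), z(t)\right\rangle_{H^{-1}(\Omega),H_0^1(\Omega)}\quad\text{for a.e. } t\in(0,T).
\end{equation*}
This is precisely what legitimizes the (formally obvious) choice $z=y$ in \eqref{weak_form}; it also guarantees that $y(t)$ makes pointwise sense in $L^2(\Omega)$ so that the left-hand side of \eqref{est_y} is well defined. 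Substituting $z=y$ into the variational identity, and reading it pointwise in time, I obtain
\begin{equation*}
\frac12\frac{d}{dt}\|y(t)\|_{L^2(\Omega)}^2 + \nu\|\nabla y(t)\|_{L^2(\Omega)}^2 + \int_\Omega (\bm{v}\cdot\nabla y)\,y\,dx + a_0\|y(t)\|_{L^2(\Omega)}^2 = 0.
\end{equation*}

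The crux of the argument is to show that the advection term vanishes, i.e. $\int_\Omega (\bm{v}\cdot\nabla y)\,y\,dx = 0$ for a.e. $t$. Writing $(\bm{v}\cdot\nabla y)\,y = \bm{v}\cdot\nabla\!\left(\tfrac12 y^2\right)$ and integrating by parts gives
\begin{equation*}
\int_\Omega \bm{v}\cdot\nabla\!\left(\tfrac12 y^2\right)dx = -\int_\Omega (\nabla\cdot\bm{v})\,\tfrac12 y^2\,dx + \int_\Gamma \tfrac12 y^2\,(\bm{v}\cdot\bm{n})\,d\sigma,
\end{equation*}
and both terms on the right drop out: the volume term because $\nabla\cdot\bm{v}=0$ (this is exactly where the divergence-free constraint defining $\mathcal{U}$ enters), and the boundary term because $y=0$ on $\Gamma$ (recall we have reduced to $g=0$). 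Hence the advection contribution disappears identically, which is the structural reason the estimate is an \emph{equality} rather than an inequality.

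Finally I would integrate the resulting pointwise identity over $(0,t)$, using $y(0)=\phi$, to get
\begin{equation*}
\tfrac12\|y(t)\|_{L^2(\Omega)}^2 + \nu\int_0^t\|\nabla y(s)\|_{L^2(\Omega)}^2\,ds + a_0\int_0^t\|y(s)\|_{L^2(\Omega)}^2\,ds = \tfrac12\|\phi\|_{L^2(\Omega)}^2,
\end{equation*}
and multiplying by $2$ yields \eqref{est_y}. I expect the main technical obstacle to be the rigorous justification of the integration by parts in the advection term: with only $\bm{v}\in[L^2(Q)]^d$ and $y\in H_0^1(\Omega)$, the triple product $(\bm{v}\cdot\nabla y)\,y$ need not be integrable a priori in high dimensions, so the cancellation must be established by a density argument—approximating $\bm{v}$ (or $y$) by smooth divergence-free fields (resp. $C_0^\infty$ functions), performing the integration by parts where everything is classical, and passing to the limit. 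This is the same low-regularity difficulty flagged in the introduction as making the analysis delicate, but for the energy identity it is handled cleanly once the divergence-free and zero-boundary conditions are used.
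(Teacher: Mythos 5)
Your proposal is correct and follows essentially the same route as the paper: the paper's proof simply multiplies the state equation by $y(t)$, applies Green's formula in space (where the advection term drops out thanks to $\nabla\cdot\bm{v}=0$ and $y=0$ on $\Gamma$, exactly as you argue), and integrates the resulting identity over $[0,t]$. The only difference is that your version makes explicit the justifications the paper leaves formal --- the Lions--Magenes lemma legitimizing $\frac{d}{dt}\|y(t)\|_{L^2(\Omega)}^2 = 2\langle \partial_t y, y\rangle$ for $y\in W(0,T)$, and the density argument needed to integrate by parts in the advection term when $\bm{v}$ is merely in $[L^2(Q)]^d$ --- which is added rigor on the same argument, not a different approach.
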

\begin{proof}
	We first multiply the state equation (\ref{state_equation}) by $y(t)$, then
	applying the Green's formula in space yields
	\begin{equation}\label{e1}
	\frac{1}{2}\frac{d}{dt}\|y(t)\|_{L^2(\Omega)}^2=-\nu\|\nabla y(t)\|_{L^2(\Omega)}^2-a_0\|y(t)\|_{L^2(\Omega)}^2.
	\end{equation}
	The desired result (\ref{est_y}) can be directly obtained by integrating (\ref{e1}) over $[0,t]$.
\end{proof}

Above estimate implies that
\begin{equation}\label{bd_y}
y~\text{is bounded in}~L^2(0,T; H_0^1(\Omega)).
\end{equation} 
On the other hand, 
$$
\frac{\partial y}{\partial t}=\nu \nabla^2y-\bm{v}\cdot \nabla y-a_0y,
$$
and the right hand side is bounded in $L^2(0,T; H^{-1}(\Omega))$. Hence, 
\begin{equation}\label{bd_yt}
\frac{\partial y}{\partial t}~\text{is bounded in}~ L^2(0,T; H^{-1}(\Omega)).
\end{equation}

Furthermore, since $\nabla\cdot\bm{v}=0$, it is clear that
	$$\iint_Q\bm{v}\cdot\nabla yzdxdt=\iint_Q\nabla y\cdot (\bm{v}z)dxdt=-\iint_Q y\nabla\cdot(\bm{v}z)dxdt=-\iint_Q y(\bm{v}\cdot\nabla z)dxdt,\forall z\in L^2(0,T;H_0^1(\Omega)).$$
	
	Hence, the variational formulation (\ref{weak_form}) can be equivalently written as:“
	find $y\in W(0,T)$ such that $y(0)=\phi$ and
    $\forall z\in L^2(0,T;H_0^1(\Omega))$,
	\begin{equation*}
	\int_0^T\left\langle\frac{\partial y}{\partial t}, z \right\rangle_{H^{-1}(\Omega),H_0^1(\Omega)} dt+\nu\iint_{Q} \nabla y\cdot\nabla z dxdt-\iint_Q(\bm{v}\cdot\nabla z) ydxdt+a_0\iint_{Q} yzdxdt=0.
	\end{equation*}

\subsection{Existence of Optimal Controls}
With above preparations, we prove in this subsection the existence of optimal controls for (BCP).
For this purpose, we first show that the objective functional $J$ is weakly lower semi-continuous.
\begin{lemma}\label{wlsc}
	The objective functional $J$ given by (\ref{objective_functional}) is weakly lower semi-continuous. That is, if a sequence $\{\bm{v}_n\}$ converges weakly to $\bar{\bm{v}}$ in $L^2(0,T; \bm{L}^2_{div}(\Omega))$, we have
	$$
	J(\bar{\bm{v}})\leq \underset{n\rightarrow \infty}{\lim\inf} J(\bm{v}_n).
	$$
\end{lemma}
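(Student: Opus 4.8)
The plan is to split $J$ into its three summands and treat them separately, the only genuinely nonlinear difficulty being the dependence of the state $y_n = S(\bm v_n)$ on the control. The first term $\frac12\iint_Q|\bm v|^2\,dxdt = \frac12\|\bm v\|_{L^2(Q)}^2$ is a continuous convex functional of $\bm v$, hence weakly lower semi-continuous on $L^2(0,T;\bm L^2_{div}(\Omega))$; this is standard and gives $\frac12\|\bar{\bm v}\|^2 \le \liminf_n \frac12\|\bm v_n\|^2$. The substance of the proof is to understand how the states behave along the weakly convergent sequence, and in particular to identify the weak limit of $\{y_n\}$ as the state $S(\bar{\bm v})$ associated with $\bar{\bm v}$.

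First I would exploit the uniform a priori bounds. Since $\bm v_n \rightharpoonup \bar{\bm v}$, the sequence $\{\bm v_n\}$ is bounded in $L^2(0,T;\bm L^2_{div}(\Omega))$. Crucially, the energy identity (\ref{est_y}) has a right-hand side $\|\phi\|_{L^2(\Omega)}^2$ that is independent of the control, so the bounds (\ref{bd_y}) and (\ref{bd_yt}) hold uniformly in $n$: $\{y_n\}$ is bounded in $L^2(0,T;H_0^1(\Omega))$ and $\{\partial y_n/\partial t\}$ is bounded in $L^2(0,T;H^{-1}(\Omega))$, i.e. $\{y_n\}$ is bounded in $W(0,T)$. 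Invoking the Aubin--Lions compactness lemma (the embedding $W(0,T)\hookrightarrow L^2(Q)$ is compact), I would extract a subsequence, not relabeled, such that $y_n \rightharpoonup \bar y$ in $L^2(0,T;H_0^1(\Omega))$, $\partial y_n/\partial t \rightharpoonup \partial \bar y/\partial t$ in $L^2(0,T;H^{-1}(\Omega))$, and $y_n \to \bar y$ strongly in $L^2(Q)$. Since the trace map $W(0,T)\ni y\mapsto y(T)\in L^2(\Omega)$ is linear and continuous, hence weakly continuous, this also yields $y_n(T)\rightharpoonup \bar y(T)$ weakly in $L^2(\Omega)$.

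The main obstacle is to pass to the limit in the weak formulation to show $\bar y = S(\bar{\bm v})$, and this is where the divergence-free structure is essential. I would work with the equivalent form of (\ref{weak_form}) in which the advection term is written as $-\iint_Q y\,(\bm v\cdot\nabla z)\,dxdt$. For the linear terms, weak convergence of $y_n$ and of $\partial y_n/\partial t$ passes to the limit directly. For the bilinear term I would first take a smooth test function $z$ with $\nabla z\in L^\infty(Q)$ and split
\[
\iint_Q y_n(\bm v_n\cdot\nabla z) - \iint_Q \bar y(\bar{\bm v}\cdot\nabla z) = \iint_Q (y_n-\bar y)(\bm v_n\cdot\nabla z) + \iint_Q \bar y\big((\bm v_n-\bar{\bm v})\cdot\nabla z\big).
\]
The first integral tends to zero because $\|y_n-\bar y\|_{L^2(Q)}\to 0$ while $\{\bm v_n\}$ is bounded in $L^2(Q)$ and $\nabla z$ is bounded; the second tends to zero because $\bar y\,\nabla z\in L^2(Q)$ is a fixed object against which $\bm v_n-\bar{\bm v}\rightharpoonup 0$ weakly. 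Passing to the limit in the whole identity for such $z$ and then extending by density to all $z\in L^2(0,T;H_0^1(\Omega))$, I conclude that $\bar y$ solves (\ref{weak_form}) with control $\bar{\bm v}$; by the uniqueness of the solution, $\bar y = S(\bar{\bm v})$.

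With the limit identified, the three terms are handled as follows. The control term is weakly lower semi-continuous as noted. The tracking term converges fully, $\iint_Q|y_n-y_d|^2\to\iint_Q|\bar y - y_d|^2$, because $y_n\to\bar y$ strongly in $L^2(Q)$. The terminal term is weakly lower semi-continuous since the squared $L^2(\Omega)$-norm is and $y_n(T)\rightharpoonup \bar y(T)$. Combining these---pulling out the convergent middle term and using the superadditivity of $\liminf$ for the remaining two---gives $J(\bar{\bm v})\le \liminf_n J(\bm v_n)$ along the extracted subsequence. Finally, a routine subsequence argument, choosing at the outset a subsequence of $\{\bm v_n\}$ realizing $\liminf_n J(\bm v_n)$ and then extracting the compactness subsequence from it, promotes the inequality to the full sequence and completes the proof. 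I expect the delicate point to be precisely the passage to the limit in the bilinear advection term, which is exactly where the incompressibility of $\bm v$ and the compactness of the state sequence must be used in tandem.
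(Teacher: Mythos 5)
Your proposal is correct and follows essentially the same route as the paper's own proof: uniform a priori bounds on the states, extraction of a subsequence with weak convergence in $L^2(0,T;H_0^1(\Omega))$ and strong convergence in $L^2(Q)$ by compactness, passage to the limit in the divergence-free form of the weak formulation to identify $\bar{y}=S(\bar{\bm{v}})$, and weak lower semi-continuity of the norms to conclude. You are in fact somewhat more careful than the paper at a few points (weak continuity of the trace $y\mapsto y(T)$ for the terminal term, the explicit splitting and density argument for the bilinear advection term, and the preliminary extraction of a subsequence realizing the $\liminf$ so the inequality holds for the full sequence), but the skeleton of the argument is identical.
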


\begin{proof}
	Let $\{\bm{v}_n\}$ be a sequence that converges weakly to $\bar{\bm{v}}$ in $L^2(0,T;\bm{L}^2_{div}(\Omega))$ and $y_n:=y(x,t;\bm{v}_n)$ the solution of the following variational problem:
	find $y_n\in W(0,T)$ such that $y_n(0)=\phi$ and $\forall z\in L^2(0,T;H_0^1(\Omega))$,
\begin{equation}\label{seq_state}
\int_0^T\left\langle\frac{\partial y_n}{\partial t}, z \right\rangle_{H^{-1}(\Omega),H_0^1(\Omega)} dt+\nu\iint_{Q} \nabla y_n\cdot\nabla z dxdt-\iint_Q(\bm{v}_n\cdot\nabla z) y_ndxdt+a_0\iint_{Q} y_nzdxdt=0.
\end{equation}
 Moreover, it follows from (\ref{bd_y}) and (\ref{bd_yt}) that there exists a subsequence of $\{y_n\}$, still denoted by $\{y_n\}$ for convenience, such that 
	$$y_n\rightarrow\bar{y}~\text{weakly in}~ L^2(0,T; H_0^1(\Omega)),$$ 
	and 
	$$\frac{\partial y_n}{\partial t}\rightarrow\frac{\partial \bar{y}}{\partial t} ~\text{weakly in}~L^2(0,T; H^{-1}(\Omega)).$$ 
	Since $\Omega$ is bounded, it follows directly from the compactness property (also known as Rellich's Theorem) that
		$$y_n\rightarrow\bar{y}~\text{strongly in}~ L^2(0,T; L^2(\Omega)).$$ 
	Taking $\bm{v}_n\rightarrow \bar{\bm{v}}$ weakly in $L^2(0,T; \bm{L}_{div}^2(\Omega))$ into account, we can pass the limit in (\ref{seq_state}) and derive that $\bar{y}(0)=\phi$ and $\forall z\in L^2(0,T;H_0^1(\Omega))$,
\begin{equation*}
\int_0^T\left\langle\frac{\partial \bar{y}}{\partial t}, z \right\rangle_{H^{-1}(\Omega),H_0^1(\Omega)} dt+\nu\iint_{Q} \nabla \bar{y}\cdot\nabla z dxdt-\iint_Q(\bar{\bm{v}}\cdot\nabla z) \bar{y}dxdt+a_0\iint_{Q} \bar{y}zdxdt=0,
\end{equation*}
which implies that $\bar{y}$ is the solution of the state equation (\ref{state_equation}) associated with $\bar{\bm{v}}$.

	Since any norm of a Banach space is weakly lower semi-continuous, we have that
	\begin{equation*}
	\begin{aligned}
	&\underset{n\rightarrow \infty}{\lim\inf} J(\bm{v}_n)\\
	= &\underset{n\rightarrow \infty}{\lim\inf}\left( \frac{1}{2}\iint_Q|\bm{v}_n|^2dxdt+\frac{\alpha_1}{2}\iint_Q|y_n-y_d|^2dxdt+\frac{\alpha_2}{2}\int_\Omega|y_n(T)-y_T|^2dx\right)\\
	\geq& \frac{1}{2}\iint_Q|\bar{\bm{v}}|^2dxdt+\frac{\alpha_1}{2}\iint_Q|\bar{y}-y_d|^2dxdt+\frac{\alpha_2}{2}\int_\Omega|\bar{y}(T)-y_T|^2dx\\
	=& J(\bar{\bm{v}}).
	\end{aligned}
	\end{equation*}
	We thus obtain that the objective functional $J$ is weakly lower semi-continuous and complete the proof.
\end{proof}

Now, we are in a position to prove the existence of an optimal solution $\bm{u}$ to (BCP).

\begin{theorem}\label{thm_existence}
	There exists at least one optimal control $\bm{u}\in \mathcal{U}=L^2(0,T; \bm{L}_{div}^2(\Omega))$ such that $J(\bm{u})\leq J(\bm{v}),\forall\bm{v}\in \mathcal{U}$.
\end{theorem}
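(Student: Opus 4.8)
The plan is to use the direct method of the calculus of variations, for which the weak lower semi-continuity established in Lemma \ref{wlsc} is the essential ingredient. First I would observe that $J(\bm{v})\geq 0$ for every $\bm{v}\in\mathcal{U}$, so that $j:=\inf_{\bm{v}\in\mathcal{U}}J(\bm{v})$ is a finite nonnegative number, and I would select a minimizing sequence $\{\bm{v}_n\}\subset\mathcal{U}$ with $J(\bm{v}_n)\to j$ as $n\to\infty$.

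The next step is to extract weak limits. Because the three terms in (\ref{objective_functional}) are all nonnegative, the convergence $J(\bm{v}_n)\to j$ forces the control cost $\frac{1}{2}\iint_Q|\bm{v}_n|^2\,dxdt$ to remain bounded; hence $\{\bm{v}_n\}$ is bounded in the Hilbert space $L^2(0,T;\bm{L}^2_{div}(\Omega))$. Reflexivity then yields a subsequence, still denoted $\{\bm{v}_n\}$, and an element $\bm{u}$ with $\bm{v}_n\rightharpoonup\bm{u}$ weakly. I must check that $\bm{u}\in\mathcal{U}$; this is immediate since $\mathcal{U}=L^2(0,T;\bm{L}^2_{div}(\Omega))$ is a closed linear subspace of $[L^2(Q)]^d$ (the constraint $\nabla\cdot\bm{v}=0$ being linear and closed in the distributional sense), and closed subspaces of a Hilbert space are weakly sequentially closed. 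The a priori bounds (\ref{bd_y}) and (\ref{bd_yt}) simultaneously guarantee that the corresponding states $y_n=S(\bm{v}_n)$ admit a subsequence converging weakly in $W(0,T)$, exactly as in the proof of Lemma \ref{wlsc}.

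Finally I would invoke Lemma \ref{wlsc} directly: applied to the minimizing sequence it gives
\begin{equation*}
J(\bm{u})\leq\liminf_{n\to\infty}J(\bm{v}_n)=j=\inf_{\bm{v}\in\mathcal{U}}J(\bm{v}).
\end{equation*}
Since $\bm{u}\in\mathcal{U}$, the reverse inequality $J(\bm{u})\geq j$ holds by the definition of the infimum, whence $J(\bm{u})=j$ and $\bm{u}$ is an optimal control.

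I expect the only genuine subtlety to have been already absorbed into Lemma \ref{wlsc}, namely the passage to the limit in the bilinear advection term $\iint_Q(\bm{v}_n\cdot\nabla z)\,y_n\,dxdt$, where a product of two merely weakly convergent sequences appears; there one upgrades the convergence of $\{y_n\}$ to strong convergence in $L^2(0,T;L^2(\Omega))$ via Rellich--type compactness, so that the weak convergence of $\{\bm{v}_n\}$ suffices to identify the limiting state. Granting that lemma, the remaining work for the theorem is routine, reducing to coercivity of the control cost and the weak sequential closedness of $\mathcal{U}$, both of which are elementary.
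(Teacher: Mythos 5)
Your proposal is correct and follows essentially the same route as the paper's proof: the direct method, with nonnegativity of $J$ giving a finite infimum, the control term $\frac{1}{2}\iint_Q|\bm{v}_n|^2dxdt\leq J(\bm{v}_n)$ giving boundedness of the minimizing sequence, weak compactness yielding a limit $\bm{u}$, and Lemma \ref{wlsc} supplying the weak lower semi-continuity that closes the argument. The extra details you supply---the weak sequential closedness of $\mathcal{U}$ as a closed subspace and the reminder that the compactness upgrade of $\{y_n\}$ is already absorbed into the lemma---are points the paper leaves implicit, not a different approach.
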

\begin{proof}
	We first observe that $J(\bm{v})\geq 0,\forall\bm{v}\in \mathcal{U}$, then the infimum of $J(\bm{v})$ exists and we denote it as
	$$
	j=\inf_{\bm{v}\in\mathcal{U}}J(\bm{v}),
	$$
	and there is a minimizing sequence $\{\bm{v}_n\}\subset\mathcal{U}$ such that
	$$
	\lim_{n\rightarrow \infty}J(\bm{v}_n)=j.
	$$
	This fact, together with $\frac{1}{2}\|\bm{v}_n\|^2_{L^2(0,T; \bm{L}^2_{div}(\Omega))}\leq J(\bm{v}_n)$, implies that $\{\bm{v}_n\}$ is bounded in $L^2(0,T;\bm{L}^2_{div}(\Omega))$. Hence, there exists a subsequence, still denoted by $\{\bm{v}_n\}$, that converges weakly to $\bm{u}$ in $L^2(0,T; \bm{L}^2_{div}(\Omega))$. It follows from Lemma \ref{wlsc} that $J$ is weakly lower semi-continuous and we thus have
	$$
	J(\bm{u})\leq \underset{n\rightarrow \infty}{\lim\inf} J(\bm{v}_n)=j.
	$$
	Since $\bm{u}\in\mathcal{U}$, we must have $J(\bm{u})=j$, and $\bm{u}$ is therefore an optimal control. 
\end{proof}
We  note that the uniqueness of optimal control $\bm{u}$ cannot be guaranteed and only a local optimal solution can be pursued because the objective functional $J$ is nonconvex due to the nonlinear relationship between the state $y$ and the control $\bm{v}$. 

\subsection{First-order Optimality Conditions}
Let $DJ(\bm{v})$ be the first-order differential of $J$ at $\bm{v}$ and $\bm{u}$ an optimal control of (BCP). It is clear that the first-order optimality condition of (BCP) reads
\begin{equation*}
DJ(\bm{u})=0.
\end{equation*}
In the sequel of this subsection, we discuss the computation of $DJ(\bm{v})$, which will play an important role in subsequent sections.

To compute $DJ(\bm{v})$, we employ a formal perturbation analysis as in \cite{glowinski2008exact}. First, let $\delta \bm{v}\in \mathcal{U}$ be a perturbation of $\bm{v}\in \mathcal{U}$, we clearly have
\begin{equation}\label{Dj and delta j}
\delta J(\bm{v})=\iint_{Q}DJ(\bm{v})\cdot\delta \bm{v} dxdt,
\end{equation}
and also
\begin{eqnarray}{\label{def_delta_j}}
\begin{aligned}
&\delta J(\bm{v})=\iint_{Q}\bm{v}\cdot\delta \bm{v} dxdt+\alpha_1\iint_{Q}(y-y_d)\delta y dxdt+\alpha_2\int_\Omega(y(T)-y_T)\delta y(T)dx,
\end{aligned}
\end{eqnarray}
in which $\delta y$ is the solution of
\begin{flalign}\label{perturbation_state_eqn}
&\left\{
\begin{aligned}
\frac{\partial \delta y}{\partial t}-\nu \nabla^2\delta y+\delta \bm{v}\cdot \nabla y+\bm{v}\cdot\nabla\delta y+a_0\delta y&=0\quad \text{in}\quad Q, \\
\delta y&=0\quad \text{on}\quad \Sigma,\\
\delta y(0)&=0.
\end{aligned}
\right.
\end{flalign}

Consider now a function $p$ defined over $\overline{Q}$ (the closure of $Q$); and assume that $p$ is a differentiable function of $x$ and $t$. Multiplying both sides of the first equation in (\ref{perturbation_state_eqn}) by $p$ and integrating over $Q$, we obtain
\begin{equation*}
\iint_{Q}p\frac{\partial }{\partial t}\delta ydxdt-\nu\iint_{Q}p \nabla^2\delta ydxdt+\iint_Q\delta \bm{v}\cdot \nabla ypdxdt+\iint_Q\bm{v}\cdot\nabla\delta ypdxdt+a_0\iint_{Q}p\delta ydxdt=0.
\end{equation*}
Integration by parts in time and application of Green's formula in space yield
\begin{eqnarray}{\label{weakform_p}}
\begin{aligned}
% \nonumber to remove numbering (before each equation)
\int_\Omega p(T)\delta y(T)dx-\int_\Omega p(0)\delta y(0)dx+\iint_{Q}\Big[ -\frac{\partial p}{\partial t}
-\nu\nabla^2p-\bm{v}\cdot\nabla p+a_0p\Big]\delta ydxdt\\
+\iint_Q\delta \bm{v}\cdot \nabla ypdxdt-\nu\iint_{\Sigma}(\frac{\partial\delta y}{\partial \bm{n}}p-\frac{\partial p}{\partial \bm{n}}\delta y)dxdt+\iint_\Sigma p\delta y\bm{v}\cdot \bm{n}dxdt=0.
\end{aligned}
\end{eqnarray}
where $\bm{n}$ is the unit outward  normal vector at $\Gamma$.

Next, let us assume that the function $p$ is the solution to the following adjoint system
\begin{flalign}\label{adjoint_equation}
&\qquad \left\{
\begin{aligned}
-\frac{\partial p}{\partial t}
-\nu\nabla^2p-\bm{v}\cdot\nabla p +a_0p&=\alpha_1(y-y_d)~ \text{in}~ Q, \\
p&=0~\qquad\quad\quad\text{on}~ \Sigma,\\
p(T)&=\alpha_2(y(T)-y_T).
\end{aligned}
\right.
\end{flalign}
It follows from (\ref{def_delta_j}), (\ref{perturbation_state_eqn}), (\ref{weakform_p}) and (\ref{adjoint_equation}) that
\begin{equation*}
\delta J(\bm{v})=\iint_{Q}(\bm{v}-p\nabla y)\cdot\delta \bm{v} dxdt.
\end{equation*}
%In a way similar as what we have done in Remark \ref{div_formula}, we have
%\begin{equation*}
%\delta J(\bm{v})=\iint_{Q}(\bm{v}+y\nabla p)\cdot\delta \bm{v} dxdt.
%\end{equation*}
which, together with (\ref{Dj and delta j}), implies that
\begin{equation}\label{gradient}
\left\{
\begin{aligned}
&DJ(\bm{v})\in \mathcal{U},\\
&\iint_QDJ(\bm{v})\cdot \bm{z}dxdt=\iint_Q(\bm{v}-p\nabla y)\cdot \bm{z}dxdt,\forall \bm{z}\in \mathcal{U}.
\end{aligned}
\right.
\end{equation}

From the discussion above, the first-order optimality condition of (BCP) can be summarized as follows.
\begin{theorem}
	Let $\bm{u}\in \mathcal{U}$ be a solution of (BCP). Then, it satisfies the following optimality condition
	\begin{equation*}
	\iint_Q(\bm{u}-p\nabla y)\cdot \bm{z}dxdt=0,\forall \bm{z}\in \mathcal{U},
	\end{equation*}
	where $y$ and $p$ are obtained from $\bm{u}$ via the solutions of the
	following two parabolic equations:
	\begin{flalign*}\tag{state equation}
	&\quad\qquad\qquad\qquad\left\{
	\begin{aligned}
\frac{\partial y}{\partial t}-\nu \nabla^2y+\bm{u}\cdot \nabla y+a_0y&=f\quad \text{in}~ Q, \\
y&=g\quad \text{on}~\Sigma,\\
y(0)&=\phi,
	\end{aligned}
	\right.&
	\end{flalign*}
	and
	\begin{flalign*}\tag{adjoint equation}
	&\qquad\qquad\qquad\left\{
	\begin{aligned}
	-\frac{\partial p}{\partial t}
	-\nu\nabla^2p-\bm{u}\cdot\nabla p +a_0p&=\alpha_1(y-y_d)\quad \text{in}~ Q, \\
	p&=0
	 \quad\qquad\qquad\text{on}~\Sigma,\\
	p(T)&=\alpha_2(y(T)-y_T).
	\end{aligned}
	\right.&
	\end{flalign*}
\end{theorem}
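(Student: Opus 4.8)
The plan is to obtain the stated identity as the first-order necessary condition for $\bm{u}$ to minimize $J$ over the closed linear subspace $\mathcal{U}=L^2(0,T;\bm{L}^2_{div}(\Omega))$ of $L^2(0,T;\bm{L}^2(\Omega))$. Since $\mathcal{U}$ is a subspace and not merely a convex set, optimality forces the directional derivative to vanish in \emph{every} admissible direction, that is $\delta J(\bm{u})=0$ for all $\delta\bm{v}\in\mathcal{U}$, equivalently $DJ(\bm{u})=0$. The whole task therefore reduces to producing a usable expression for $\delta J(\bm{v})$ and then specializing to $\bm{v}=\bm{u}$. Accordingly, I would first fix an arbitrary direction $\delta\bm{v}\in\mathcal{U}$ and compute the directional derivative $\delta J(\bm{v})$.

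The first substantive step is to differentiate the control-to-state map $S$. Because the state equation (\ref{state_equation}) is linear in $y$ and the nonlinearity enters only through the product $\bm{v}\cdot\nabla y$, I would subtract the equations satisfied by $y(\bm{v}+\tau\delta\bm{v})$ and $y(\bm{v})$, divide by $\tau$, and pass to the limit $\tau\to 0$ to arrive at the sensitivity system (\ref{perturbation_state_eqn}) for $\delta y$. This limit must be justified in $W(0,T)$ by applying the a priori bounds (\ref{bd_y})--(\ref{bd_yt}) to the difference quotients, which yields Gâteaux differentiability of $S$ and identifies $\delta y=S'(\bm{v})\delta\bm{v}$. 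Granting this, the chain rule produces formula (\ref{def_delta_j}) for $\delta J(\bm{v})$, in which $\delta y$ still appears explicitly and must be eliminated.

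The heart of the argument is the elimination of $\delta y$ via the adjoint state $p$. I would multiply the sensitivity equation by $p$, integrate over $Q$, integrate by parts in time, and apply Green's formula in space to reach (\ref{weakform_p}). The condition $\delta y(0)=0$ removes the initial time-boundary term, while the homogeneous Dirichlet data $\delta y=0$ and $p=0$ on $\Sigma$ annihilate all surface integrals, including the advective boundary term $\iint_\Sigma p\,\delta y\,\bm{v}\cdot\bm{n}$. Choosing $p$ to solve the backward adjoint problem (\ref{adjoint_equation}) --- well-posed in $W(0,T)$ after the time reversal $t\mapsto T-t$ --- makes the bracketed volume term reproduce exactly $\alpha_1(y-y_d)$ in the interior and $\alpha_2(y(T)-y_T)$ at the terminal time, so that the $\delta y$ contributions in (\ref{def_delta_j}) cancel against those in (\ref{weakform_p}). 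What survives is $\delta J(\bm{v})=\iint_Q(\bm{v}-p\nabla y)\cdot\delta\bm{v}\,dxdt$. Comparing with (\ref{Dj and delta j}) and using that $\delta\bm{v}\in\mathcal{U}$ is arbitrary identifies $DJ(\bm{v})$ as the orthogonal projection of $\bm{v}-p\nabla y$ onto $\mathcal{U}$, which is precisely (\ref{gradient}); evaluating at $\bm{u}$ and invoking $DJ(\bm{u})=0$ gives the claimed identity.

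I expect the main obstacle to be the rigorous justification of the differentiability step together with the integration by parts. The formal computation treats $p$ as a classical differentiable function, whereas in the variational framework $p\in W(0,T)$ only, so the manipulations leading to (\ref{weakform_p}) must be reinterpreted through the weak formulation and the duality pairing $\langle\cdot,\cdot\rangle_{H^{-1}(\Omega),H_0^1(\Omega)}$ rather than pointwise. In particular, the surviving term $\iint_Q p\nabla y\cdot\delta\bm{v}$ requires enough integrability of the product $p\nabla y$ to be tested against $\delta\bm{v}\in L^2$, which is exactly where the regularity of the state and adjoint, and the divergence-free structure $\nabla\cdot\bm{v}=0$ used earlier to recast the advection term, become essential.
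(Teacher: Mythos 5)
Your proposal is correct and follows essentially the same route as the paper: a perturbation analysis yielding the sensitivity equation for $\delta y$, elimination of $\delta y$ by testing against the adjoint state $p$ solving (\ref{adjoint_equation}), the identification $\delta J(\bm{v})=\iint_Q(\bm{v}-p\nabla y)\cdot\delta\bm{v}\,dxdt$, and finally the optimality condition $DJ(\bm{u})=0$ on the subspace $\mathcal{U}$. The only difference is cosmetic: you sketch a justification of the differentiability of the control-to-state map via difference quotients and flag the weak-formulation issues, whereas the paper carries out the same computation formally, as it states explicitly.
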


\section{An Implementable Nested Conjugate Gradient Method}\label{se:cg}
In this section, we discuss the application of a CG strategy to solve (BCP). In particular, we elaborate on the computation of the gradient and the stepsize at each CG iteration, and thus obtain an easily implementable algorithm.
\subsection{A Generic Conjugate Gradient Method for (BCP)}
Conceptually, implementing the CG method to (BCP), we readily obtain the following  algorithm:
\begin{enumerate}
	\item[\textbf{(a)}] Given $\bm{u}^0\in \mathcal{U}$.
	\item [\textbf{(b)}] Compute $\bm{g}^0=DJ(\bm{u}^0)$. If $DJ(\bm{u}^0)=0$, then $\bm{u}=\bm{u}^0$; otherwise set $\bm{w}^0=\bm{g}^0$.
	\item[]\noindent For $k\geq 0$, $\bm{u}^k,\bm{g}^k$ and $\bm{w}^k$ being known, the last two different from $\bm{0}$, one computes $\bm{u}^{k+1}, \bm{g}^{k+1}$ and $\bm{w}^{k+1}$ as follows:
	\item[\textbf{(c)}] Compute the stepsize $\rho_k$ by solving the following optimization problem
	\begin{flalign}\label{op_step}
	&\left\{
	\begin{aligned}
	& \rho_k\in \mathbb{R}, \\
	&J(\bm{u}^k-\rho_k\bm{w}^k)\leq J(\bm{u}^k-\rho \bm{w}^k), \forall \rho\in \mathbb{R}.
	\end{aligned}
	\right.
	\end{flalign}
    \item[\textbf{(d)}] Update $\bm{u}^{k+1}$ and $\bm{g}^{k+1}$, respectively, by
    $$\bm{u}^{k+1}=\bm{u}^k-\rho_k \bm{w}^k,$$
     and
     $$\bm{g}^{k+1}=DJ(\bm{u}^{k+1}).$$
    \item[] If $DJ(\bm{u}^{k+1})=0$, take $\bm{u}=\bm{u}^{k+1}$; otherwise,
    \item[\textbf{(e)}] Compute $$\beta_k=\frac{\iint_{Q}|\bm{g}^{k+1}|^2dxdt}{\iint_{Q}|\bm{g}^k|^2dxdt},$$
    and then update
    $$\bm{w}^{k+1}=\bm{g}^{k+1}+\beta_k \bm{w}^k.$$
    \item[] Do $k+1\rightarrow k$ and return to (\textbf{c}).
\end{enumerate}

The above iterative method looks very simple, but practically, the implementation of the CG method (\textbf{a})--(\textbf{e}) for the solution of (BCP) is nontrivial. In particular, it is numerically challenging to compute $DJ(\bm{v})$, $\forall \bm{v}\in\mathcal{U}$ and $\rho_k$ as illustrated below. We shall discuss how to address these two issues in the following part of this section.

\subsection{Computation of $DJ(\bm{v})$}\label{com_gra}
It is clear that the implementation of the generic CG method (\textbf{a})--(\textbf{e}) for the solution of (BCP) requires the knowledge of $DJ(\bm{v})$ for various $\bm{v}\in \mathcal{U}$, and this has been conceptually provided in (\ref{gradient}). However, it is numerically challenging to compute $DJ(\bm{v})$ by (\ref{gradient}) due to the restriction $\nabla\cdot DJ(\bm{v})=0$ which ensures that all iterates $\bm{u}^k$ of the CG method meet the additional divergence-free constraint $\nabla\cdot \bm{u}^k=0$. In this subsection, we show that equation (\ref{gradient}) can be reformulated as a saddle point problem by introducing a Lagrange multiplier associated with the constraint $\nabla\cdot DJ(\bm{v})=0$. Then, a preconditioned CG method is proposed to solve this saddle point problem.

We first note that equation (\ref{gradient}) can be equivalently reformulated as
\begin{equation}\label{gradient_e}
\left\{
\begin{aligned}
&DJ(\bm{v})(t)\in \mathbb{S},\\
&\int_\Omega DJ(\bm{v})(t)\cdot \bm{z}dx=\int_\Omega(\bm{v}(t)-p(t)\nabla y(t))\cdot \bm{z}dx,\forall \bm{z}\in \mathbb{S},
\end{aligned}
\right.
\end{equation}
where 
\begin{equation*}
\mathbb{S}=\{\bm{z}|\bm{z}\in [L^2(\Omega)]^d, \nabla\cdot\bm{z}=0\}.
\end{equation*}
Clearly, problem (\ref{gradient_e}) is a particular case of 
\begin{equation}\label{gradient_e2}
\left\{
\begin{aligned}
&\bm{g}\in \mathbb{S},\\
&\int_\Omega \bm{g}\cdot \bm{z}dx=\int_\Omega\bm{f}\cdot \bm{z}dx,\forall \bm{z}\in \mathbb{S},
\end{aligned}
\right.
\end{equation}
with $\bm{f}$ given in $[L^2(\Omega)]^d$.

 Introducing a Lagrange multiplier $\lambda\in H_0^1(\Omega)$ associated with the constraint $\nabla\cdot\bm{z}=0$, it is clear that problem (\ref{gradient_e2}) is equivalent to the following saddle point problem
\begin{equation}\label{gradient_e3}
\left\{
\begin{aligned}
&(\bm{g},\lambda)\in [L^2(\Omega)]^d\times H_0^1(\Omega),\\
&\int_\Omega \bm{g}\cdot \bm{z}dx=\int_\Omega\bm{f}\cdot \bm{z}dx+\int_\Omega\lambda\nabla\cdot \bm{z}dx,\forall \bm{z}\in [L^2(\Omega)]^d,\\
&\int_\Omega\nabla\cdot \bm{g}qdx=0,\forall q\in H_0^1(\Omega),
\end{aligned}
\right.
\end{equation}
which is actually a Stokes type problem.

In order to solve problem (\ref{gradient_e3}), we advocate a CG method inspired from \cite{glowinski2003, glowinski2015}.
For this purpose,
one has to specify the inner product to be used over $H_0^1(\Omega)$.
As discussed in \cite{glowinski2003}, the usual $L^2$-inner product,
namely, $\{q,q'\}\rightarrow\int_\Omega qq'dx$
leads to a CG method with poor convergence properties. Indeed, using some arguments similar to those in \cite{glowinski1992,glowinski2003}, we can show that the saddle point problem (\ref{gradient_e3}) can be reformulated as a linear variational problem in terms of the Lagrange multiplier $\lambda$. The corresponding coefficient matrix after space
discretization with mesh size $h$ has a condition number of the order of $h^{-2}$, which is ill-conditioned especially for small $h$ and makes the CG method converges fairly slow. Hence, preconditioning is necessary for solving problem (\ref{gradient_e3}). As suggested in \cite{glowinski2003}, we choose $-\nabla\cdot\nabla$ as a preconditioner for  problem (\ref{gradient_e3}), and the corresponding preconditioned CG method operates in the space $H_0^1(\Omega)$ equipped with the inner product $\{q,q'\}\rightarrow\int_\Omega\nabla q\cdot\nabla q'dx$ and the associated norm $\|q\|_{H_0^1(\Omega)}=(\int_\Omega|\nabla q|^2dx)^{1/2}, \forall q,q'\in H_0^1(\Omega)$. The resulting algorithm reads as:
\begin{enumerate}
	\item [\textbf{G1}] Choose $\lambda^0\in H_0^1(\Omega)$.
	\item [\textbf{G2}] Solve 
	\begin{equation*}
	\left\{
	\begin{aligned}
	&\bm{g}^0\in [L^2(\Omega)]^d,\\
	&\int_\Omega \bm{g}^0\cdot \bm{z}dx=\int_\Omega\bm{f}\cdot \bm{z}dx+\int_\Omega\lambda^0\nabla\cdot \bm{z}dx,\forall \bm{z}\in [L^2(\Omega)]^d,
	\end{aligned}
	\right.
	\end{equation*}
	and
	\begin{equation*}
	\left\{
	\begin{aligned}
	&r^0\in H_0^1(\Omega),\\
	&\int_\Omega \nabla r^0\cdot \nabla qdx=\int_\Omega\nabla\cdot \bm{g}^0qdx,\forall q\in H_0^1(\Omega).
	\end{aligned}
	\right.
	\end{equation*}
	\smallskip
	If $\frac{\int_\Omega|\nabla r^0|^2dx}{\max\{1,\int_\Omega|\nabla \lambda^0|^2dx\}}\leq tol_1$, take $\lambda=\lambda^0$ and $\bm{g}=\bm{g}^0$; otherwise set $w^0=r^0$.
	For $k\geq 0$, $\lambda^k,\bm{g}^k, r^k$ and $w^k$ being known with the last two different from 0, we compute
	$\lambda^{k+1},\bm{g}^{k+1}, r^{k+1}$ and if necessary $w^{k+1}$, as follows:
	\smallskip
	\item[\textbf{G3}] Solve 
	\begin{equation*}
	\left\{
	\begin{aligned}
	&\bar{\bm{g}}^k\in [L^2(\Omega)]^d,\\
	&\int_\Omega \bar{\bm{g}}^k\cdot \bm{z}dx=\int_\Omega w^k\nabla\cdot \bm{z}dx,\forall \bm{z}\in [L^2(\Omega)]^d,
	\end{aligned}
	\right.
	\end{equation*}
	and
	\begin{equation*}
	\left\{
	\begin{aligned}
	&\bar{r}^k\in H_0^1(\Omega),\\
	&\int_\Omega \nabla \bar{r}^k\cdot \nabla qdx=\int_\Omega\nabla\cdot \bar{\bm{g}}^kqdx,\forall q\in H_0^1(\Omega),
	\end{aligned}
	\right.
	\end{equation*}
	and compute the stepsize via
	$$
	\eta_k=\frac{\int_\Omega|\nabla r^k|^2dx}{\int_\Omega\nabla\bar{r}^k\cdot\nabla w^kdx}.
	$$
	\item[\textbf{G4}] Update $\lambda^k, \bm{g}^k$ and $r^k$ via
	$$\lambda^{k+1}=\lambda^k-\eta_kw^k,\bm{g}^{k+1}=\bm{g}^k-\eta_k \bar{\bm{g}}^k,~\text{and}~r^{k+1}=r^k-\eta_k \bar{r}^k.$$
	\smallskip
	If $\frac{\int_\Omega|\nabla r^{k+1}|^2dx}{\max\{1,\int_\Omega|\nabla r^0|^2dx\}}\leq tol_1$, take $\lambda=\lambda^{k+1}$ and $\bm{g}=\bm{g}^{k+1}$; otherwise, 
	\item[\textbf{G5}] Compute
	$$\gamma_k=\frac{\int_\Omega|\nabla r^{k+1}|^2dx}{\int_\Omega|\nabla r^k|^2dx},$$
	and update $w^k$ via
	$$w^{k+1}=r^{k+1}+\gamma_k w^{k}.$$
	Do $k+1\rightarrow k$ and return to \textbf{G3}.
\end{enumerate}

Clearly, one only needs to solve two simple linear equations at each iteration of the preconditioned CG algorithm (\textbf{G1})-(\textbf{G5}), which implies that the algorithm is easy and cheap to implement. Moreover, due to the well-chosen preconditioner $-\nabla\cdot\nabla$, one can expect the above preconditioned CG algorithm to have a fast convergence; this will be validated by the numerical experiments reported in Section \ref{se:numerical}.

\subsection{Computation of the Stepsize $\rho_k$}\label{com_step}
Another crucial step to implement the CG method \textbf{(a)}--\textbf{(e)} is the computation of the stepsize $\rho_k$. It is the solution of the optimization problem (\ref{op_step}) which is numerically expensive to be solved exactly or up to a high accuracy. For instance, to solve (\ref{op_step}), one may consider the Newton method applied to the
solution of
$$
H_k'(\rho_k)=0,
$$
where
$$H_k(\rho)=J(\bm{u}^k-\rho\bm{w}^k).$$
The Newton method requires the second-order derivative $H_k''(\rho)$ which can be computed
via an iterated adjoint technique requiring the solution of \emph{four} parabolic
problems per Newton's iteration. Hence, the implementation of the Newton method is numerically expensive. 

The high computational load for solving (\ref{op_step}) motivates us to implement certain
stepsize rule to determine an approximation of $\rho_k$. Here, we advocate the following procedure to
compute an approximate stepsize $\hat{\rho}_k$. 

 For a given $\bm{w}^k\in\mathcal{U}$, we replace the state $y=S(\bm{u}^k-\rho\bm{w}^k)$ in the objective functional $J(\bm{u}^k-\rho\bm{w}^k)$ by
$$
S(\bm{u}^k)-\rho S'(\bm{u}^k)\bm{w}^k,
$$
which is indeed the linearization of the mapping $\rho \mapsto S(\bm{u}^k - \rho \bm{w}^k)$ at $\rho= 0$. We thus obtain the following quadratic approximation of $H_k(\rho)$:
\begin{equation}\label{q_rho}
Q_k(\rho):=\frac{1}{2}\iint_Q|\bm{u}^k-\rho \bm{w}^k|^2dxdt+\frac{\alpha_1}{2}\iint_Q|y^k-\rho z^k-y_d|^2dxdt+\frac{\alpha_2}{2}\int_\Omega|y^k(T)-\rho z^k(T)-y_T|^2dx,
\end{equation}
where $y^k=S(\bm{u}^k)$ is the solution of the state equation (\ref{state_equation}) associated with $\bm{u}^k$, and $z^k=S'(\bm{u}^k)\bm{w}^k$ satisfies the following linear parabolic problem
\begin{flalign}\label{linear_state}
&\left\{
\begin{aligned}
\frac{\partial  z^k}{\partial t}-\nu \nabla^2 z^k+\bm{w}^k\cdot \nabla y^k    +\bm{u}^k\cdot\nabla z^k+a_0 z^k&=0\quad \text{in}\quad Q, \\
z^k&=0\quad \text{on}\quad \Sigma,\\
z^k(0)&=0.
\end{aligned}
\right.
\end{flalign}

Then, it is easy to show that the equation
$
Q_k'(\rho)=0
$
admits a unique solution
\begin{equation}\label{step_size}
\hat{\rho}_k =\frac{\iint_Q\bm{g}^k\cdot \bm{w}^k dxdt}{\iint_Q|\bm{w}^k|^2dxdt+ \alpha_1\iint_Q|z^k|^2dxdt+\alpha_2\int_\Omega|z^k(T)|^2dx},
\end{equation}
and we take $\hat{\rho}_k$, which is clearly an approximation of $\rho_k$, as the stepsize in each CG iteration. 

Altogether, with the stepsize given by (\ref{step_size}), every iteration of the resulting CG algorithm requires solving only  \emph{three} parabolic problems, namely,  the state equation (\ref{state_equation}) forward in time and the associated adjoint equation (\ref{adjoint_equation})
backward in time for the computation of $\bm{g}^k$, and to solving the linearized
parabolic equation (\ref{linear_state}) forward in time for the stepsize $\hat{\rho}_k$. For comparison, if the Newton method is employed to compute the stepsize $\rho_k$ by solving (\ref{op_step}), at least \emph{six} parabolic problems are required to be solved at each iteration of the CG method, which is much more  expensive numerically.
\begin{remark}
To find an appropriate stepsize, a natural idea is to employ some line search strategies, such as the backtracking strategy based on the Armijo--Goldstein condition or the Wolf condition, see e.g., \cite{nocedal2006}. It is worth noting that these line search strategies require the evaluation of $J(\bm{v})$ repeatedly, which is numerically expensive because every evaluation of $J(\bm{v})$ for a given $\bm{v}$ requires solving the state equation (\ref{state_equation}). Moreover, we have implemented the CG method for solving (BCP) with various line search strategies and observed from the numerical results that line search strategies always lead to tiny stepsizes making extremely slow the convergence of the CG method.
\end{remark}
\subsection{A Nested CG Method for Solving (BCP)}
Following Sections \ref{com_gra} and \ref{com_step}, we advocate the following nested CG method for solving (BCP):
\begin{enumerate}
	\item[\textbf{I.}] Given $\bm{u}^0\in \mathcal{U}$.
	\item[\textbf{II.}] Compute $y^0$ and $p^0$ by solving the state equation (\ref{state_equation}) and the adjoint equation (\ref{adjoint_equation}) corresponding to $\bm{u}^0$.
%		\begin{flalign*}
%	&\left\{
%	\begin{aligned}
%	\frac{\partial y^0}{\partial t}-\nu \nabla^2y^0+\bm{u}^0\cdot \nabla y^0+a_0y^0=f,&~ \text{in}~ Q, \\
%	y^0=g,&~ \text{on}~ \Sigma,\\
%	y^0(0)=\phi.
%	\end{aligned}
%	\right.
%	\end{flalign*}
%	and
%	\begin{flalign*}
%	&\left\{
%	\begin{aligned}
%	-\frac{\partial p^0}{\partial t}
%	-\nu\nabla^2p^0-\bm{u}^0\cdot\nabla p^0 +a_0p^0&=\alpha_1(y^0-y_d),~ \text{in}~ Q, \\
%	p^0&=0,~~
%\qquad\qquad~\qquad\text{on}~\Sigma,\\
%	p^0(T)&=\alpha_2(y^0(T)-y_T).
%	\end{aligned}
%	\right.
%	\end{flalign*}
	Then, for a.e. $t \in(0, T)$, solve 
	\begin{equation*}
	\left\{
	\begin{aligned}
	&\bm{g}^0(t)\in \mathbb{S},\\
	&\int_\Omega \bm{g}^0(t)\cdot \bm{z}dx=\int_\Omega(\bm{u}^0(t)-p^0(t)\nabla y^0(t))\cdot \bm{z}dx,\forall \bm{z}\in \mathbb{S},
	\end{aligned}
	\right.
	\end{equation*}
	by the preconditioned CG algorithm (\textbf{G1})--(\textbf{G5}); and set $\bm{w}^0=\bm{g}^0.$
	
	\medskip
	\noindent For $k\geq 0$, $\bm{u}^k, \bm{g}^k$ and $\bm{w}^k$ being known, the last two different from $\bm{0}$, one computes $\bm{u}^{k+1}, \bm{g}^{k+1}$ and $\bm{w}^{k+1}$ as follows:
	\medskip
	\item[\textbf{III.}] Compute the stepsize $\hat{\rho}_k$ by (\ref{step_size}).
	\item[\textbf{IV.}] Update $\bm{u}^{k+1}$ by $$\bm{u}^{k+1}=\bm{u}^k-\hat{\rho}_k\bm{w}^k.$$
	Compute $y^{k+1}$ and $p^{k+1}$ by solving the state equation (\ref{state_equation}) and the adjoint equation (\ref{adjoint_equation}) corresponding to $\bm{u}^{k+1}$;
%	
%	Solve
%		\begin{flalign*}
%	&\left\{
%	\begin{aligned}
%	\frac{\partial y^{k+1}}{\partial t}-\nu \nabla^2y^{k+1}+\bm{u}^{k+1}\cdot \nabla y^{k+1}+a_0y^{k+1}=f,&~ \text{in}~ Q, \\
%	y^{k+1}=g,&~ \text{on}~ \Sigma,\\
%	y^{k+1}(0)=\phi.
%	\end{aligned}
%	\right.
%	\end{flalign*}
%	then
%	\begin{flalign*}
%	&\left\{
%	\begin{aligned}
%	-\frac{\partial p^{k+1}}{\partial t}
%	-\nu\nabla^2p^{k+1}-\bm{u}^{k+1}\cdot\nabla p^{k+1} +a_0p^{k+1}&=\alpha_1(y^{k+1}-y_d),~ \text{in}~ Q, \\
%	p^{k+1}&=0,~~
%	\qquad\qquad~\qquad\quad\text{on}~\Sigma,\\
%	p^{k+1}(T)&=\alpha_2(y^{k+1}(T)-y_T).
%	\end{aligned}
%	\right.
%	\end{flalign*}
	and for a.e. $t \in(0, T)$, solve 
	\begin{equation*}
	\left\{
	\begin{aligned}
	&\bm{g}^{k+1}(t)\in \mathbb{S},\\
	&\int_\Omega \bm{g}^{k+1}(t)\cdot \bm{z}dx=\int_\Omega(\bm{u}^{k+1}(t)-p^{k+1}(t)\nabla y^{k+1}(t))\cdot \bm{z}dx,\forall \bm{z}\in \mathbb{S},
	\end{aligned}
	\right.
	\end{equation*}
	by the preconditioned CG algorithm (\textbf{G1})--(\textbf{G5}).
	
	\medskip
	\noindent If $\frac{\iint_Q|\bm{g}^{k+1}|^2dxdt}{\iint_Q|\bm{g}^{0}|^2dxdt}\leq tol$, take $\bm{u} = \bm{u}^{k+1}$; else
	\medskip
	\item[\textbf{V.}] Compute $$\beta_k=\frac{\iint_Q|\bm{g}^{k+1}|^2dxdt}{\iint_Q|\bm{g}^{k}|^2dxdt},~\text{and}~\bm{w}^{k+1} = \bm{g}^{k+1} + \beta_k\bm{w}^k.$$
	 Do $k+1\rightarrow k$ and return to \textbf{III}.
\end{enumerate}

\section{Space and time discretizations}\label{se:discretization}
In this section, we discuss first the numerical discretization of the bilinear optimal control problem (BCP). We achieve the
time discretization by a semi-implicit finite difference method and the space discretization by a piecewise linear finite element
method. Then, we discuss an implementable nested CG method for solving the fully discrete bilinear optimal control problem.
\subsection{Time Discretization of (BCP)}
First, we define a time discretization step $\Delta t$ by $\Delta t= T/N$,
with $N$ a positive integer. Then, we approximate the control space $\mathcal{U}=L^2(0, T;\mathbb{S})$ by 
$
\mathcal{U}^{\Delta t}:=(\mathbb{S})^N;
$
and
equip $\mathcal{U}^{\Delta t}$ with the following inner product
$$
(\bm{v},\bm{w})_{\Delta t} = \Delta t \sum^N_{n=1}\int_\Omega \bm{v}_n\cdot \bm{w}_ndx, \quad\forall \bm{v}= \{\bm{v}_n\}^N_{n=1}, \bm{w} = \{\bm{w}_n\}^N_{n=1}
 \in\mathcal{U}^{\Delta t},
$$
and the norm
$$
\|\bm{v}\|_{\Delta t} = \left(\Delta t \sum^N_{n=1}\int_\Omega |\bm{v}_n|^2dx\right)^{\frac{1}{2}}, \quad\forall \bm{v}= \{\bm{v}_n\}^N_{n=1}
\in\mathcal{U}^{\Delta t}.
$$
Then, (BCP) is approximated by the following semi-discrete bilinear control problem (BCP)$^{\Delta t}$:
\begin{flalign*}
&\hspace{-4.5cm}\text{(BCP)}^{\Delta t}\qquad\qquad\qquad\qquad\left\{
\begin{aligned}
& \bm{u}^{\Delta t}\in \mathcal{U}^{\Delta t}, \\
&J^{\Delta t}(\bm{u}^{\Delta t})\leq J^{\Delta t}(\bm{v}),\forall \bm{v}=\{\bm{v}_n\}_{n=1}^N\in\mathcal{U}^{\Delta t},
\end{aligned}
\right.
\end{flalign*}
where the cost functional $J^{\Delta t}$ is defined by
\begin{equation*}
 J^{\Delta t}(\bm{v})=\frac{1}{2}\Delta t \sum^N_{n=1}\int_\Omega |\bm{v}_n|^2dx+\frac{\alpha_1}{2}\Delta t \sum^N_{n=1}\int_\Omega |y_n-y_d^n|^2dx+\frac{\alpha_2}{2}\int_\Omega|y_N-y_T|^2dx,
\end{equation*}
with $\{y_n\}^N_{n=1}$ the solution of the following semi-discrete state
equation: $y_0=\phi$; then for $n=1,\ldots,N$, with $y_{n-1}$ being known, we obtain $y_n$ from the
solution of the following linear elliptic problem:
\begin{flalign}\label{state_semidis}
&\left\{
\begin{aligned}
\frac{{y}_n-{y}_{n-1}}{\Delta t}-\nu \nabla^2{y}_n+\bm{v}_n\cdot\nabla{y}_{n-1}+a_0{y}_{n-1}&= f_n\quad \text{in}\quad \Omega, \\
y_n&=g_n\quad \text{on}\quad \Gamma.
\end{aligned}
\right.
\end{flalign}
\begin{remark}
	For simplicity, we have chosen a one-step semi-explicit
	scheme to discretize system (\ref{state_equation}). This scheme is
	first-order accurate and reasonably robust, once combined to an appropriate
	space discretization. The application of second-order accurate time discretization schemes to optimal control problems has been discussed in e.g., \cite{carthelglowinski1994}.
\end{remark}
\begin{remark}
	At each step of scheme (\ref{state_semidis}), we only need to solve a simple
	linear elliptic problem to obtain $y_n$ from $y_{n-1}$, and there is no particular
	difficulty in solving such a problem.
\end{remark} 
The existence of a solution to the semi-discrete bilinear optimal control problem (BCP)$^{\Delta t}$ can be proved in a similar way as what we have done for the continuous case. 
Let $\bm{u}^{\Delta t}$ be a solution of (BCP)$^{\Delta t}$, then it verifies the following first-order optimality condition:
\begin{equation*}
DJ^{\Delta t}(\bm{u}^{\Delta t}) = 0,
\end{equation*}
where $DJ^{\Delta t}(\bm{v})$ is the first-order differential of the functional $J^{\Delta t}$ at $\bm{v}\in\mathcal{U}^{\Delta t}$.

Proceeding as in the continuous case, we can show that $DJ^{\Delta t}(\bm{v})=\{\bm{g}_n\}_{n=1}^N\in\mathcal{U}^{\Delta t}$ where
\begin{equation*}
\left\{
\begin{aligned}
&\bm{g}_n\in \mathbb{S},\\
&\int_\Omega \bm{g}_n\cdot \bm{w}dx=\int_\Omega(\bm{v}_n-p_n\nabla y_{n-1})\cdot \bm{w}dx,\forall\bm{w}\in \mathbb{S},
\end{aligned}
\right.
\end{equation*}
and the vector-valued function $\{p_n\}^N_{n=1}$ is the solution of the semi-discrete adjoint system below:
\begin{equation*}
{p}_{N+1}=\alpha_2({y}_N-y_T);
\end{equation*}
for $n=N$, solve
 \begin{flalign*}
\qquad \left\{
\begin{aligned}
\frac{{p}_N-{p}_{N+1}}{\Delta t}-\nu \nabla^2{p}_N&= \alpha_1({y}_N-y_d^N)&\quad \text{in}\quad \Omega, \\
p_N&=0&\quad \text{on}\quad \Gamma,
\end{aligned}
\right.
\end{flalign*}
and for $n=N-1,\cdots,1,$ solve
\begin{flalign*}
\qquad \left\{
\begin{aligned}
\frac{{p}_n-{p}_{n+1}}{\Delta t}-\nu\nabla^2{p}_n-\bm{v}_{n+1}\cdot\nabla{p}_{n+1}+a_0{p}_{n+1}&=  \alpha_1({y}_n-y_d^n)&\quad \text{in}\quad \Omega, \\
p_n&=0&\quad \text{on}\quad \Gamma.
\end{aligned}
\right.
\end{flalign*}

\subsection{Space Discretization of (BCP)$^{\Delta t}$}

In this subsection, we discuss the space discretization of (BCP)$^{\Delta t}$, obtaining thus a full space-time discretization of (BCP).
 For simplicity, we suppose from now on that $\Omega$ is a polygonal
domain of $\mathbb{R}^2$ (or has been approximated by a
family of such domains). 

Let $\mathcal{T}_H$ be a classical triangulation of $\Omega$, with $H$ the largest length of the edges of the triangles of $\mathcal{T}_H$. From $\mathcal{T}_{H}$ we construct $\mathcal{T}_{h}$ with $h=H/2$ by joining the mid-points of the
edges of the triangles of $\mathcal{T}_{H}$.

 We first consider the finite element space $V_h$ defined by
\begin{equation*}
	V_h = \{\varphi_h| \varphi_h\in C^0(\bar{\Omega}); { \varphi_h\mid}_{\mathbb{T}}\in P_1, \forall\, {\mathbb{T}}\in\mathcal{T}_h\}
\end{equation*}
with $P_1$ the space of the polynomials of two variables of degree $\leq 1$. Two useful sub-spaces of $V_h$
are
\begin{equation*}
V_{0h} =\{\varphi_h| \varphi_h\in V_h, \varphi_h\mid_{\Gamma}=0\}:=V_h\cap H_0^1(\Omega),
\end{equation*}
and (assuming that $g(t)\in C^0(\Gamma)$)
\begin{eqnarray*}
V_{gh}(t) =\{\varphi_h| \varphi_h\in V_h, \varphi_h(Q)=g(Q,t), \forall\, Q ~\text{vertex of} ~\mathcal{T}_h~\text{located on}~\Gamma \}.
\end{eqnarray*}
In order to construct the discrete control space, we introduce first
\begin{equation*}
\Lambda_H = \{\varphi_H| \varphi_H\in C^0(\bar{\Omega}); { \varphi_H\mid}_{\mathbb{T}}\in P_1, \forall\, {\mathbb{T}}\in\mathcal{T}_H\},~\text{and}~\Lambda_{0H} =\{\varphi_H| \varphi_H\in \Lambda_H, \varphi_H\mid_{\Gamma}=0\}.
\end{equation*}
Then, the discrete control space $\mathcal{U}_h^{\Delta t}$ is defined by
\begin{equation*}
\mathcal{U}_h^{\Delta t}=(\mathbb{S}_h)^N,~\text{with}~\mathbb{S}_h=\{\bm{v}_h|\bm{v}_h\in V_h\times V_h,\int_\Omega \nabla\cdot\bm{v}_hq_Hdx\left(=-\int_\Omega\bm{v}_h\cdot\nabla q_Hdx\right)=0,\forall q_H\in \Lambda_{0H}\}.
\end{equation*}

With the above finite element spaces, we approximate (BCP) and (BCP)$^{\Delta t}$ by (BCP)$_h^{\Delta t}$ defined by
\begin{flalign*}
&\hspace{-4.2cm}\text{(BCP)}_h^{\Delta t}\qquad\qquad\qquad\qquad\qquad\qquad\left\{
\begin{aligned}
& \bm{u}_h^{\Delta t}\in \mathcal{U}_h^{\Delta t}, \\
&J_h^{\Delta t}(\bm{u}_h^{\Delta t})\leq J_h^{\Delta t}(\bm{v}_h^{\Delta t}),\forall \bm{v}_h^{\Delta t}\in\mathcal{U}_h^{\Delta t},
\end{aligned}
\right.
\end{flalign*}
where the fully discrete cost functional $J_h^{\Delta t}$ is defined by 
\begin{equation}\label{obj_fuldis}
J_h^{\Delta t}(\bm{v}_h^{\Delta t})=\frac{1}{2}\Delta t \sum^N_{n=1}\int_\Omega |\bm{v}_{n,h}|^2dx+\frac{\alpha_1}{2}\Delta t \sum^N_{n=1}\int_\Omega |y_{n,h}-y_d^n|^2dx+\frac{\alpha_2}{2}\int_\Omega|y_{N,h}-y_T|^2dx
\end{equation}
with $\{y_{n,h}\}^N_{n=1}$ the solution of the following fully discrete state
equation: $y_{0,h}=\phi_h\in V_h$, where $\phi_h$ verifies 
$$
\phi_h\in V_h, \forall\, h>0,~\text{and}~\lim_{h\rightarrow 0}\phi_h=\phi,~\text{in}~L^2(\Omega),
$$
then, for $n=1,\ldots,N$, with $y_{n-1,h}$ being known, we obtain $y_{n,h}\in V_{gh}(n\Delta t)$ from the
solution of the following linear variational problem:
\begin{equation}\label{state_fuldis}
\int_\Omega\frac{{y}_{n,h}-{y}_{n-1,h}}{\Delta t}\varphi dx+\nu \int_\Omega\nabla{y}_{n,h}\cdot\nabla\varphi dx+\int_\Omega\bm{v}_n\cdot\nabla{y}_{n-1,h}\varphi dx+\int_\Omega a_0{y}_{n-1,h}\varphi dx= \int_\Omega f_{n}\varphi dx,\forall \varphi\in V_{0h}.
\end{equation}
In the following discussion, the subscript $h$ in all variables will be omitted for simplicity.

In a similar way as what we have done in the continuous case, one can show that the first-order differential of $J_h^{\Delta t}
$ at $\bm{v}\in\mathcal{U}_h^{\Delta t}$ is $DJ_h^{\Delta t}(\bm{v})=\{\bm{g}_n\}_{n=1}^N\in (\mathbb{S}_h)^N$ where
\begin{equation}\label{gradient_ful}
\left\{
\begin{aligned}
&\bm{g}_n\in \mathbb{S}_h,\\
&\int_\Omega \bm{g}_n\cdot \bm{z}dx=\int_\Omega(\bm{v}_n-p_n\nabla y_{n-1})\cdot \bm{z}dx,\forall\bm{z}\in \mathbb{S}_h, 
\end{aligned}
\right.
\end{equation}
and the vector-valued function $\{p_n\}^N_{n=1}$ is the solution of the following fully discrete adjoint system:
\begin{equation}\label{ful_adjoint_1}
{p}_{N+1}=\alpha_2({y}_N-y_T);
\end{equation}
for $n=N$, solve
\begin{flalign}\label{ful_adjoint_2}
\qquad \left\{
\begin{aligned}
&p_N\in V_{0h},\\
&\int_\Omega\frac{{p}_N-{p}_{N+1}}{\Delta t}\varphi dx+\nu\int_\Omega \nabla{p}_N\cdot\nabla \varphi dx= \int_\Omega\alpha_1({y}_N-y_d^N)\varphi dx,\forall \varphi\in V_{0h},
\end{aligned}
\right.
\end{flalign}
then, for $n=N-1,\cdots,1,$,  solve
\begin{flalign}\label{ful_adjoint_3}
\qquad \left\{
\begin{aligned}
&p_n\in V_{0h},\\
&\int_\Omega\frac{{p}_n-{p}_{n+1}}{\Delta t}\varphi dx+\nu\int_\Omega\nabla{p}_n\cdot\nabla\varphi dx-\int_\Omega\bm{v}_{n+1}\cdot\nabla{p}_{n+1}\varphi dx\\
&\qquad\qquad\qquad\qquad\qquad\qquad\qquad+a_0\int_\Omega{p}_{n+1}\varphi dx=\int_\Omega  \alpha_1({y}_n-y_d^n)\varphi dx,\forall \varphi\in V_{0h}.
\end{aligned}
\right.
\end{flalign}

It is worth mentioning that the so-called discretize-then-optimize strategy is employed here, which implies that we first discretize (BCP), and to compute the gradient in a discrete setting, the fully discrete adjoint equation (\ref{ful_adjoint_1})--(\ref{ful_adjoint_3})
has been derived from the fully discrete cost functional $J_h^{\Delta t}(\bm{v})$ (\ref{obj_fuldis}) and
the fully discrete state equation (\ref{state_fuldis}). This implies that the fully discrete
state equation (\ref{state_fuldis}) and the  fully discrete adjoint equation (\ref{ful_adjoint_1})--(\ref{ful_adjoint_3}) are
strictly in duality. This fact guarantees that $-DJ_h^{\Delta t}(\bm{v})$ is a descent direction of the fully discrete bilinear optimal control problem (BCP)$_h^{\Delta t}$.

\begin{remark}
	A natural alternative has been advocated in the literature: (i) Derive the adjoint equation to compute the first-order differential of the cost functional in a continuous setting; (ii) Discretize the state and adjoint state
	equations by certain numerical schemes; (iii) Use the
	resulting discrete analogs of $y$ and $p$ to compute a
	discretization of the differential of the cost functional. The main problem with this optimize-then-discretize approach is
	that it may not preserve a strict duality between the
	discrete state equation and the discrete adjoint
	equation. This fact implies in turn that the resulting discretization of the continuous
	gradient may not be a gradient of the discrete optimal control problem. As a consequence, the resulting algorithm is not a descent algorithm and divergence may take place (see \cite{GH1998} for a related discussion).
\end{remark}

\subsection{A Nested CG Method for Solving the Fully Discrete Problem (BCP)$_h^{\Delta t}$}\label{DCG}
In this subsection, we propose a nested CG method for solving the fully discrete problem (BCP)$_h^{\Delta t}$. As discussed in Section \ref{se:cg}, the implementation of CG requires the knowledge of $DJ_h^{\Delta t}(\bm{v})$ and an appropriate stepsize. In the following discussion, we address these two issues by extending the results for the continuous case in Sections \ref{com_gra} and \ref{com_step} to the fully discrete settings; and derive the corresponding CG algorithm.

First, it is clear that one can compute $DJ_h^{\Delta t}(\bm{v})$ via the solution of the $N$ linear variational problems encountered in (\ref{gradient_ful}). For this purpose, we introduce a Lagrange multiplier $\lambda\in \Lambda_{0H}$ associated with the divergence-free constraint, then problem (\ref{gradient_ful}) is equivalent to the following saddle point system
\begin{equation}\label{fulgradient_e}
\left\{
\begin{aligned}
&(\bm{g}_n,\lambda)\in (V_h\times V_h)\times \Lambda_{0H},\\
&\int_\Omega \bm{g}_n\cdot \bm{z}dx=\int_\Omega (\bm{v}_n-p_n\nabla y_{n-1})\cdot \bm{z}dx+\int_\Omega\lambda\nabla\cdot \bm{z}dx,\forall \bm{z}\in V_h\times V_h,\\
&\int_\Omega\nabla\cdot\bm{g}_nqdx=0,\forall q\in \Lambda_{0H}.
\end{aligned}
\right.
\end{equation}

As discussed in Section \ref{com_gra}, problem (\ref{fulgradient_e}) can be solved 
 by the following preconditioned CG algorithm, which is actually a discrete analogue of (\textbf{G1})--(\textbf{G5}).
\begin{enumerate}
	\item [\textbf{DG1}] Choose $\lambda^0\in \Lambda_{0H}$.
	\item [\textbf{DG2}] Solve 
	\begin{equation*}
	\left\{
	\begin{aligned}
	&\bm{g}_n^0\in V_h\times V_h,\\
	&\int_\Omega\bm{g}_n^0\cdot \bm{z}dx=\int_\Omega(\bm{v}_n-p_n\nabla y_{n-1})\cdot \bm{z}dx+\int_\Omega\lambda^0\nabla\cdot \bm{z}dx,\forall \bm{z}\in V_h\times V_h,
	\end{aligned}
	\right.
	\end{equation*}
	and
	\begin{equation*}
	\left\{
	\begin{aligned}
	&r^0\in \Lambda_{0H},\\
	&\int_\Omega \nabla r^0\cdot \nabla qdx=\int_\Omega\nabla\cdot \bm{g}_n^0qdx,\forall q\in \Lambda_{0H}.
	\end{aligned}
	\right.
	\end{equation*}
	\smallskip
	If $\frac{\int_\Omega|\nabla r^0|^2dx}{\max\{1,\int_\Omega|\nabla \lambda^0|^2dx\}}\leq tol_1$, take $\lambda=\lambda^0$ and $\bm{g}_n=\bm{g}_n^0$; otherwise set $w^0=r^0$.
	For $k\geq 0$, $\lambda^k,\bm{g}_n^k, r^k$ and $w^k$ being known with the last two different from 0, we define
	$\lambda^{k+1},\bm{g}_n^{k+1}, r^{k+1}$ and if necessary $w^{k+1}$, as follows:
	\smallskip
	\item[\textbf{DG3}] Solve 
	\begin{equation*}
	\left\{
	\begin{aligned}
	&\bar{\bm{g}}_n^k\in V_h\times V_h,\\
	&\int_\Omega \bar{\bm{g}}_n^k\cdot \bm{z}dx=\int_\Omega w^k\nabla\cdot \bm{z}dx,\forall \bm{z}\in V_h\times V_h,
	\end{aligned}
	\right.
	\end{equation*}
	and
	\begin{equation*}
	\left\{
	\begin{aligned}
	&\bar{r}^k\in \Lambda_{0H},\\
	&\int_\Omega \nabla \bar{r}^k\cdot \nabla qdx=\int_\Omega\nabla\cdot \bar{\bm{g}}_n^kqdx,\forall  q\in\Lambda_{0H},
	\end{aligned}
	\right.
	\end{equation*}
	and compute
	$$
	\eta_k=\frac{\int_\Omega|\nabla r^k|^2dx}{\int_\Omega\nabla\bar{r}^k\cdot\nabla w^kdx}.
	$$
	\item[\textbf{DG4}] Update $\lambda^k,\bm{g}_n^k$ and $r^k$ via
	$$\lambda^{k+1}=\lambda^k-\eta_kw^k,\bm{g}_n^{k+1}=\bm{g}_n^k-\eta_k\bar{\bm{g}}_n^k,~\text{and}~r^{k+1}=r^k-\eta_k \bar{r}^k.$$
	\smallskip
	If $\frac{\int_\Omega|\nabla r^{k+1}|^2dx}{\max\{1,\int_\Omega|\nabla r^0|^2dx\}}\leq tol_1$, take $\lambda=\lambda^{k+1}$ and $\bm{g}_n=\bm{g}_n^{k+1}$; otherwise, 
	\item[\textbf{DG5}] Compute
	$$\gamma_k=\frac{\int_\Omega|\nabla r^{k+1}|^2dx}{\int_\Omega|\nabla r^k|^2dx},$$
	and update $w^k$ via
	$$w^{k+1}=r^{k+1}+\gamma_k w^{k}.$$
	Do $k+1\rightarrow k$ and return to \textbf{DG3}.
\end{enumerate}

To find an appropriate stepsize in the CG iteration for the solution of (BCP)$_h^{\Delta t}$, we note that, for any $\{\bm{w}_n\}_{n=1}^N\in (\mathbb{S}_h)^N$, the fully discrete analogue of $Q_k(\rho)$ in (\ref{q_rho}) reads as
$$%\begin{equation}\label{obj_semi_dis}
Q_h^{\Delta t}(\rho)=\frac{1}{2}\Delta t \sum^N_{n=1}\int_\Omega |\bm{u}_n-\rho\bm{w}_n|^2dx+\frac{\alpha_1}{2}\Delta t \sum^N_{n=1}\int_\Omega |y_{n}-\rho z_{n}-y_d^n|^2dx+\frac{\alpha_2}{2}\int_\Omega|y_{N}-\rho z_{N}-y_T|^2dx,
$$%\end{equation}
where the vector-valued function $\{z_n\}^N_{n=1}$ is obtained as follows: $z_0=0$; then for $n=1,\ldots,N$, with $z_{n-1}$ being known, $z_n$ is obtained from the
solution of the linear variational problem
$$%\begin{flalign}\label{state_semidis}
\left\{
\begin{aligned}
&z_n\in V_{0h},\\
&\int_\Omega\frac{{z}_n-{z}_{n-1}}{\Delta t}\varphi dx+\nu\int_\Omega \nabla{z}_n\cdot\nabla\varphi dx+\int_\Omega\bm{w}_n\cdot\nabla y_n\varphi dx\\
&\qquad\qquad\qquad\qquad\qquad+\int_\Omega\bm{u}_n\cdot\nabla{z}_{n-1}\varphi dx+a_0\int_\Omega{z}_{n-1}\varphi dx= 0,\forall\varphi\in V_{0h}. \\
\end{aligned}
\right.
$$
As discussed in Section \ref{com_step} for the continuous case, we take the unique solution of ${Q_h^{\Delta t}}'(\rho)=0$ as the stepsize in each CG iteration, that is
\begin{equation}\label{step_ful}
\hat{\rho}_h^{\Delta t} =\frac{\Delta t\sum_{n=1}^{N}\int_\Omega\bm{g}_n\cdot \bm{w}_n dx}{\Delta t\sum_{n=1}^{N}\int_\Omega|\bm{w}_n|^2dxdt+ \alpha_1\Delta t\sum_{n=1}^{N}\int_\Omega|z_n|^2dxdt+\alpha_2\int_\Omega|z_N|^2dx}.
\end{equation}

Finally, with above preparations, we propose the following nested CG algorithm for the solution of the fully discrete control problem
(BCP)$_h^{\Delta t}$.
\begin{enumerate}
	\item[\textbf{DI.}] Given $\bm{u}^0:=\{\bm{u}_n^0\}_{n=1}^N\in (\mathbb{S}_h)^N$.
	\item[\textbf{DII.}] Compute $\{y_n^0\}_{n=0}^N$ and $\{p^0_n\}_{n=1}^{N+1}$ by solving the fully discrete state equation (\ref{state_fuldis}) and the fully discrete adjoint equation (\ref{ful_adjoint_1})--(\ref{ful_adjoint_3}) corresponding to $\bm{u}^0$.
	Then, for $n=1,\cdots, N$ solve 
	\begin{equation*}
	\left\{
	\begin{aligned}
	&\bm{g}_n^0\in \mathbb{S}_h,\\
	&\int_\Omega \bm{g}_n^0\cdot \bm{z}dx=\int_\Omega(\bm{u}_n^0-p_n^0\nabla y_{n-1}^0)\cdot \bm{z}dx,\forall \bm{z}\in \mathbb{S}_h,
	\end{aligned}
	\right.
	\end{equation*}
	by the preconditioned CG algorithm (\textbf{DG1})--(\textbf{DG5}), and set $\bm{w}^0_n=\bm{g}_n^0.$
	
	\medskip
	\noindent For $k\geq 0$, $\bm{u}^k, \bm{g}^k$ and $\bm{w}^k$ being known, the last two different from $\bm{0}$, one computes $\bm{u}^{k+1}, \bm{g}^{k+1}$ and $\bm{w}^{k+1}$ as follows:
	\medskip
	\item[\textbf{DIII.}] Compute the stepsize $\hat{\rho}_k$ by (\ref{step_ful}).
	\item[\textbf{DIV.}] Update $\bm{u}^{k+1}$ by $$\bm{u}^{k+1}=\bm{u}^k-\hat{\rho}_k\bm{w}^k.$$
	Compute $\{y_n^{k+1}\}_{n=0}^N$ and $\{p_n^{k+1}\}_{n=1}^{N+1}$ by solving the fully discrete state equation (\ref{state_fuldis}) and the fully discrete adjoint equation (\ref{ful_adjoint_1})--(\ref{ful_adjoint_3}) corresponding to $\bm{u}^{k+1}$.
	Then, for $n=1,\cdots,N$, solve 
	\begin{equation}\label{dis_gradient}
	\left\{
	\begin{aligned}
	&\bm{g}_n^{k+1}\in \mathbb{S}_h,\\
	&\int_\Omega \bm{g}_n^{k+1}\cdot \bm{z}dx=\int_\Omega(\bm{u}_n^{k+1}-p_n^{k+1}\nabla y_{n-1}^{k+1})\cdot \bm{z}dx,\forall \bm{z}\in \mathbb{S}_h,
	\end{aligned}
	\right.
	\end{equation}
	by the preconditioned CG algorithm (\textbf{DG1})--(\textbf{DG5}).
	
	\medskip
	\noindent If $\frac{\Delta t\sum_{n=1}^N\int_\Omega|\bm{g}_n^{k+1}|^2dx}{\Delta t\sum_{n=1}^N\int_\Omega|\bm{g}_n^{0}|^2dx}\leq tol$, take $\bm{u} = \bm{u}^{k+1}$; else
	\medskip
	\item[\textbf{DV.}] Compute $$\beta_k=\frac{\Delta t\sum_{n=1}^N\int_\Omega|\bm{g}_n^{k+1}|^2dx}{\Delta t\sum_{n=1}^N\int_\Omega|\bm{g}_n^{k}|^2dx},~\text{and}~\bm{w}^{k+1} = \bm{g}^{k+1} + \beta_k\bm{w}^k.$$
	Do $k+1\rightarrow k$ and return to \textbf{DIII}.
\end{enumerate}

 Despite its apparent complexity, the CG algorithm (\textbf{DI})-(\textbf{DV}) is easy to implement. Actually, one of the main computational difficulties in the implementation of the above algorithm seems to be the solution of $N$  linear systems (\ref{dis_gradient}), which is time-consuming. However, it is worth noting that the linear systems (\ref{dis_gradient}) are separable with respect to different $n$ and they can be solved in parallel. As a consequent, one can compute the gradient $\{\bm{g}^{k}_n\}_{n=1}^N$ simultaneously and the computation time can be reduced significantly. 
	
Moreover, it is clear that the computation of $\{\bm{g}^{k}_n\}_{n=1}^N$ requires the storage of the solutions of (\ref{state_fuldis}) and (\ref{ful_adjoint_1})-(\ref{ful_adjoint_3}) at all points in space and time. For large scale problems, especially in three space dimensions, it will be very
memory demanding and maybe even impossible to store the full sets $\{y_n^k\}_{n=0}^N$ and $\{p_n^k\}_{n=1}^{N+1}$ simultaneously. To tackle this issue, one can employ the strategy described in e.g., \cite[Section 1.12]{glowinski2008exact} that can drastically reduce the
storage requirements at the expense of a small CPU increase.

\section{Numerical Experiments}\label{se:numerical}
	In this section, we report some preliminary numerical results validating the efficiency of the proposed CG algorithm (\textbf{DI})--(\textbf{DV}) for (BCP). All codes were written in MATLAB R2016b and numerical experiments were conducted on a Surface Pro 5 laptop with 64-bit Windows 10.0 operation system, Intel(R) Core(TM) i7-7660U CPU (2.50 GHz), and 16 GB RAM.
	
	\medskip
\noindent\textbf{Example 1.} We consider the bilinear optimal control problem (BCP) on the domain $Q=\Omega\times(0,T)$ with $\Omega=(0,1)^2$ and $T=1$. In particular, we take the control $\bm{v}(x,t)$ in a finite-dimensional space, i.e. $\bm{v}\in L^2(0,T;\mathbb{R}^2)$.
In addition, we set $\alpha_2=0$ in (\ref{objective_functional}) and consider the following tracking-type bilinear optimal control problem:
\begin{equation}\label{model_ex1}
\min_{\bm{v}\in L^2(0,T;\mathbb{R}^2)}J(\bm{v})=\frac{1}{2}\int_0^T|\bm{v}(t)|^2dt+\frac{\alpha_1}{2}\iint_Q|y-y_d|^2dxdt,
\end{equation}
where $|\bm{v}(t)|=\sqrt{\bm{v}_1(t)^2+\bm{v}_2(t)^2}$ is the canonical 2-norm, and $y$ is obtained from $\bm{v}$ via the solution of the state equation (\ref{state_equation}).

Since the control $\bm{v}$ is considered in a finite-dimensional space, the divergence-free constraint $\nabla\cdot\bm{v}=0$ is verified automatically. As a consequence, the first-order differential $DJ(\bm{v})$ can be easily computed. Indeed, it is easy to show that
\begin{equation}\label{oc_finite}
DJ(\bm{v})=\left\{\bm{v}_i(t)+\int_\Omega y(t)\frac{\partial p(t)}{\partial x_i}dx\right \}_{i=1}^2,~\text{a.e.~on}~(0,T),\forall \bm{v}\in L^2(0,T;\mathbb{R}^2),
\end{equation}
where $p(t)$ is the solution of the adjoint equation (\ref{adjoint_equation}). The inner preconditioned CG algorithm (\textbf{DG1})-(\textbf{DG5}) for the computation of the gradient $\{\bm{g}_n\}_{n=1}^N$ is thus avoided.

In order to examine the efficiency of the proposed CG algorithm (\textbf{DI})--(\textbf{DV}), we construct an example with a known exact solution. To this end,
we set $\nu=1$ and $a_0=1$ in (\ref{state_equation}), and
$$
y=e^t(-3\sin(2\pi x_1)\sin(\pi x_2)+1.5\sin(\pi x_1)\sin(2\pi x_2)),\quad p=(T-t)\sin \pi x_1 \sin \pi x_2.
$$
Substituting these two functions into the optimality condition $DJ(\bm{u}(t))=0$, we have
$$
\bm{u}=(\bm{u}_1,\bm{u}_2)^\top=(2e^t(T-t),-e^t(T-t))^\top.
$$
We further set
\begin{eqnarray*}
&&f=\frac{\partial y}{\partial t}-\nabla^2y+{\bm{u}}\cdot \nabla y+y,
 \quad\phi=-3\sin(2\pi x_1)\sin(\pi x_2)+1.5\sin(\pi x_1)\sin(2\pi x_2),\\
&&y_d=y-\frac{1}{\alpha_1}\left(-\frac{\partial p}{\partial t}
-\nabla^2p-\bm{u}\cdot\nabla p +p\right),\quad g=0.
\end{eqnarray*}
Then, it is easy to verify that $\bm{u}$ is a solution point of the problem (\ref{model_ex1}). 
We display the solution $\bm{u}$ and the target function $y_d$ at  different instants of time in Figure \ref{exactU_ex1} and Figure \ref{target_ex1}, respectively.
	\begin{figure}[htpb]
	\centering{
		\includegraphics[width=0.43\textwidth]{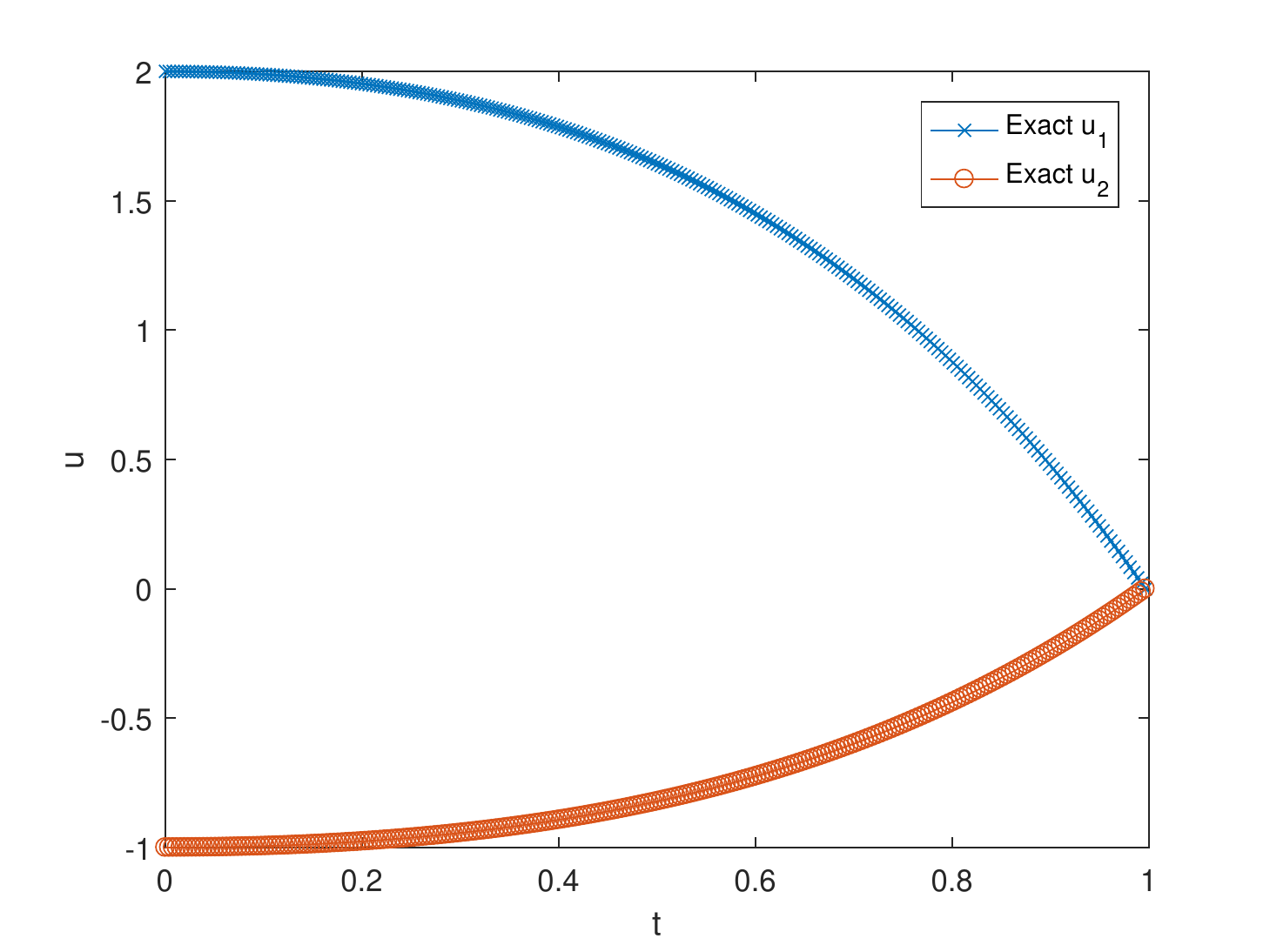}
	}
	\caption{The exact optimal control $\bm{u}$ for Example 1.}
	\label{exactU_ex1}
\end{figure}

	\begin{figure}[htpb]
	\centering{
		\includegraphics[width=0.3\textwidth]{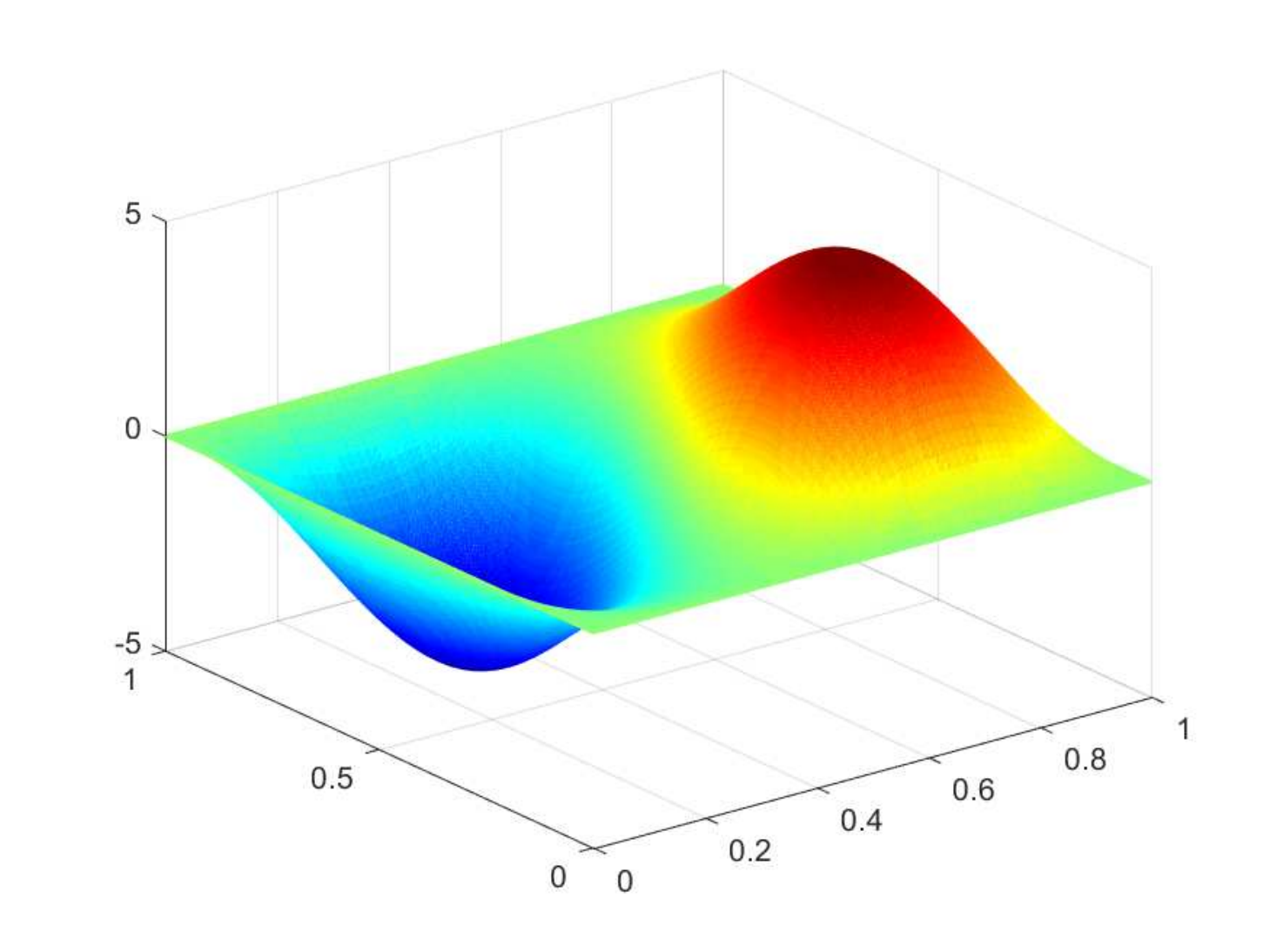}
	\includegraphics[width=0.3\textwidth]{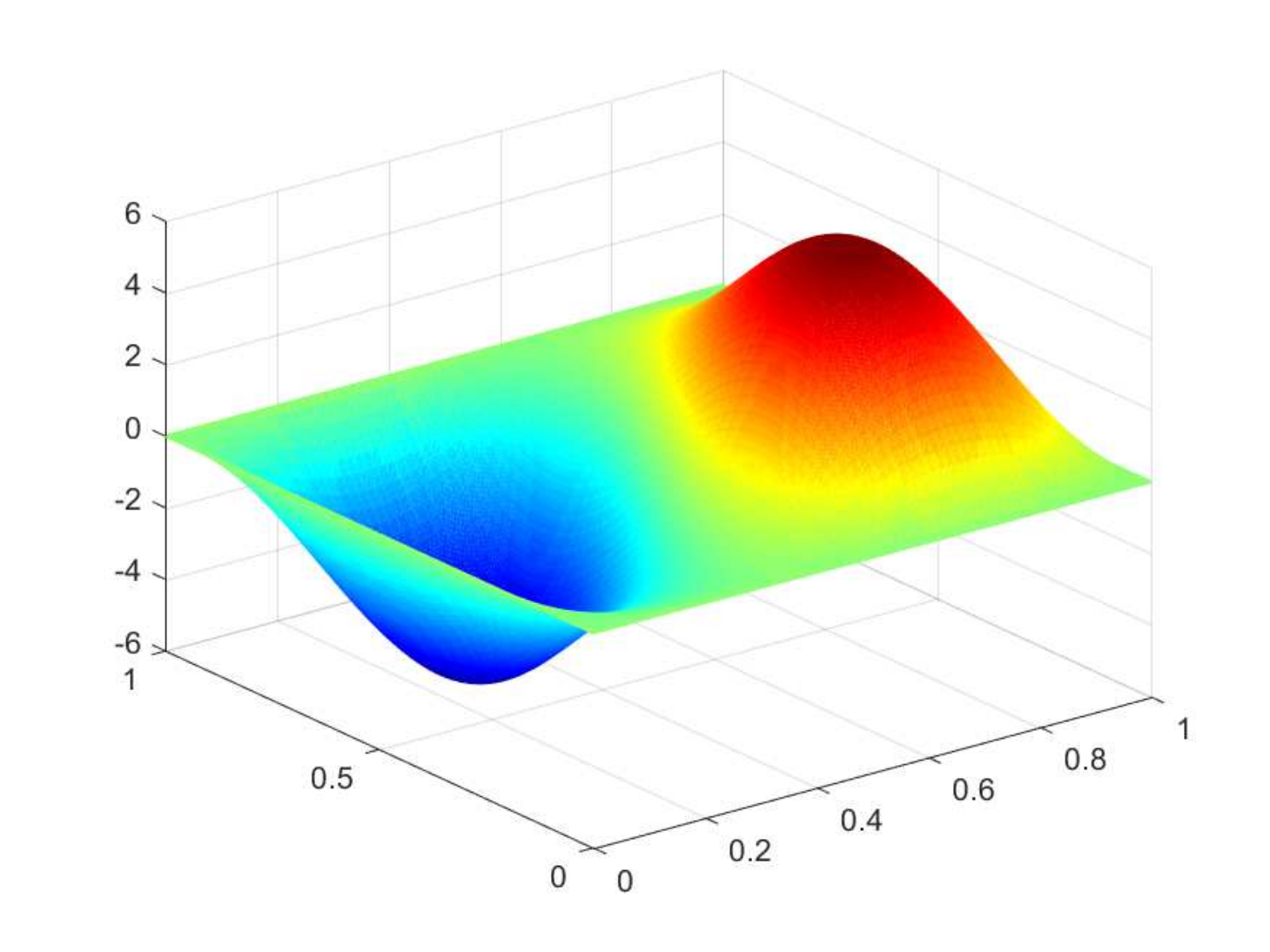}
			\includegraphics[width=0.3\textwidth]{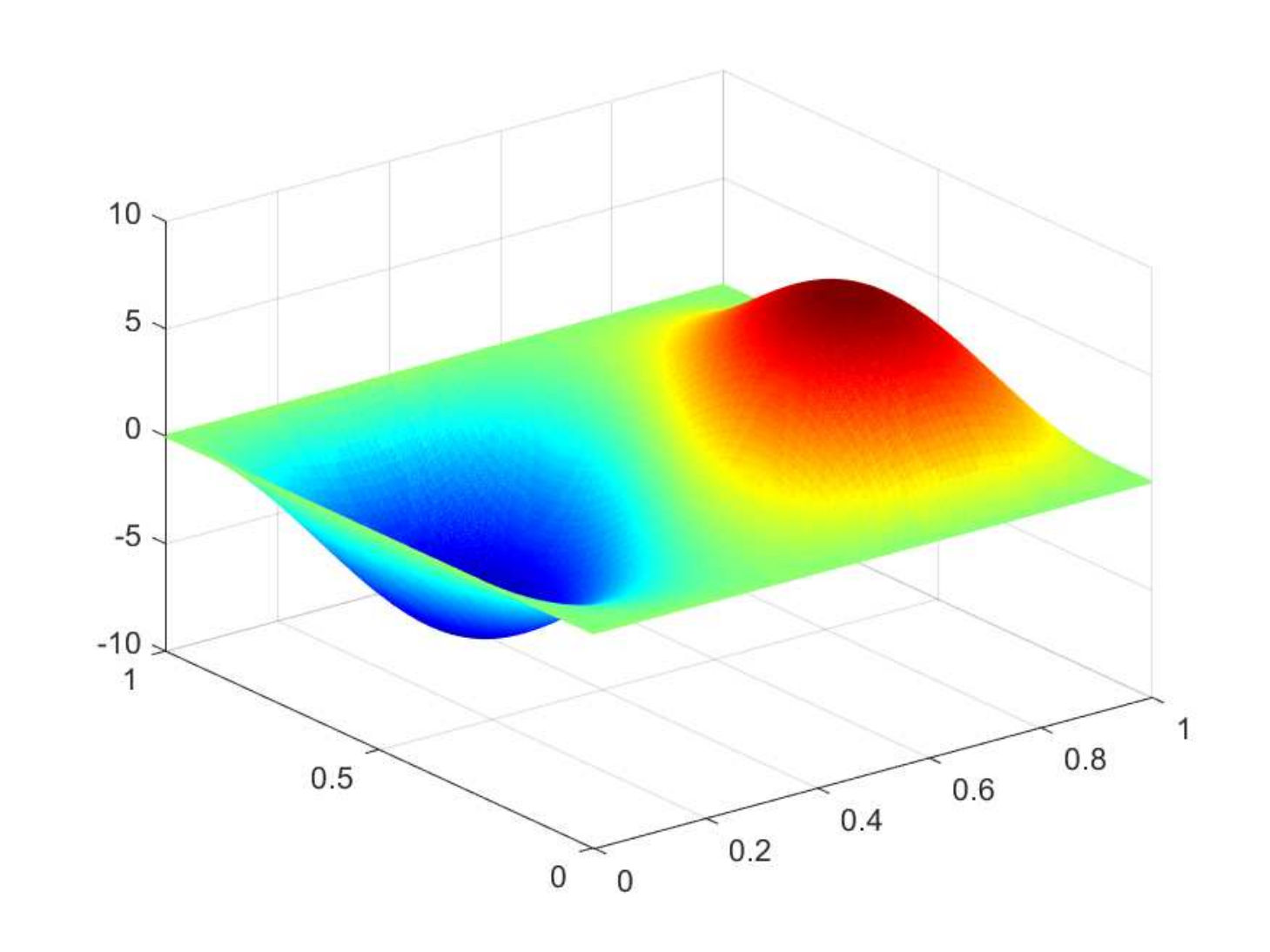}
		}
	\caption{The target function $y_d$ at $t=0.25, 0.5$ and $0.75$ (from left to right) for Example 1.}
	\label{target_ex1}
\end{figure}

The stopping criterion of the CG algorithm (\textbf{DI})--(\textbf{DV}) is set as
$$
\frac{\Delta t\sum_{n=1}^N|\bm{g}^{k+1}_n|^2}{\Delta t\sum_{n=1}^N|\bm{g}^{0}_n|^2}\leq 10^{-5}.
$$
The initial value is chosen as $\bm{u}^0=(0,0)^\top$; and we denote by $\bm{u}^{\Delta t}$ and $y_h^{\Delta t}$ the computed control and state, respectively. 

First, we take $h=\frac{1}{2^i}, i=5,6,7,8$, $\Delta t=\frac{h}{2}$ and $\alpha_1=10^6$, and implement the proposed CG algorithm (\textbf{DI})--(\textbf{DV}) for solving the problem (\ref{model_ex1}). The numerical results reported in Table \ref{tab:mesh_EX1} show that the CG algorithm converges fairly fast and is robust with respect to different mesh sizes. We also observe that the target function $y_d$ has been reached within a good accuracy. Similar comments hold for the approximation of the optimal control $\bm{u}$ and of the state $y$ of problem (\ref{model_ex1}). By taking $h=\frac{1}{2^7}$ and $\Delta t=\frac{1}{2^8}$, the computed state  $y_h^{\Delta t}$ and $y_h^{\Delta t}-y_d$ at $t=0.25,0.5$ and $0.75$ are reported in Figures \ref{stateEx1_1}, \ref{stateEx1_2} and \ref{stateEx1_3}, respectively; and the computed control $\bm{u}^{\Delta t}$ and error $\bm{u}^{\Delta t}-\bm{u}$ are visualized in Figure \ref{controlEx1}.
	\begin{table}[htpb] 
	{\small\centering
	\caption{Results of the CG algorithm (\textbf{DI})--(\textbf{DV}) with different $h$ and $\Delta t$ for Example 1.}	\label{tab:mesh_EX1}
	\begin{tabular}{|c|c|c|c|c|c|}
		\hline Mesh sizes &$Iter$& $\|\bm{u}^{\Delta t}-\bm{u}\|_{L^2(0,T;\mathbb{R}^2)}$&$\|y_h^{\Delta t}-y\|_{L^2(Q)}$& ${\|y_h^{\Delta t}-y_d\|_{L^2(Q)}}/{\|y_d\|_{{L^2(Q)}}}$  \\
		\hline $h=1/2^5,\Delta t=1/2^6$   & 117 &2.8820$\times 10^{-2}$ &1.1569$\times 10^{-2}$&3.8433$\times 10^{-3}$  
		\\
		\hline $h=1/2^6,\Delta t=1/2^7$   &48&1.3912$\times 10^{-2}$& 2.5739$\times 10^{-3}$&8.5623$\times 10^{-4}$  
		\\
		\hline $h=1/2^7,\Delta t=1/2^8$   &48&6.9095$\times 10^{-3}$&  4.8574$\times 10^{-4}$ &1.6516$\times 10^{-4}$  
		\\
		\hline $h=1/2^8,\Delta t=1/2^9$& 31 &3.4845$\times 10^{-3}$ &6.6231$\times 10^{-5}$  &2.2196$\times 10^{-5}$ 
		\\
		\hline
	\end{tabular}}
\end{table}

\begin{figure}[htpb]
	\centering{
		\includegraphics[width=0.3\textwidth]{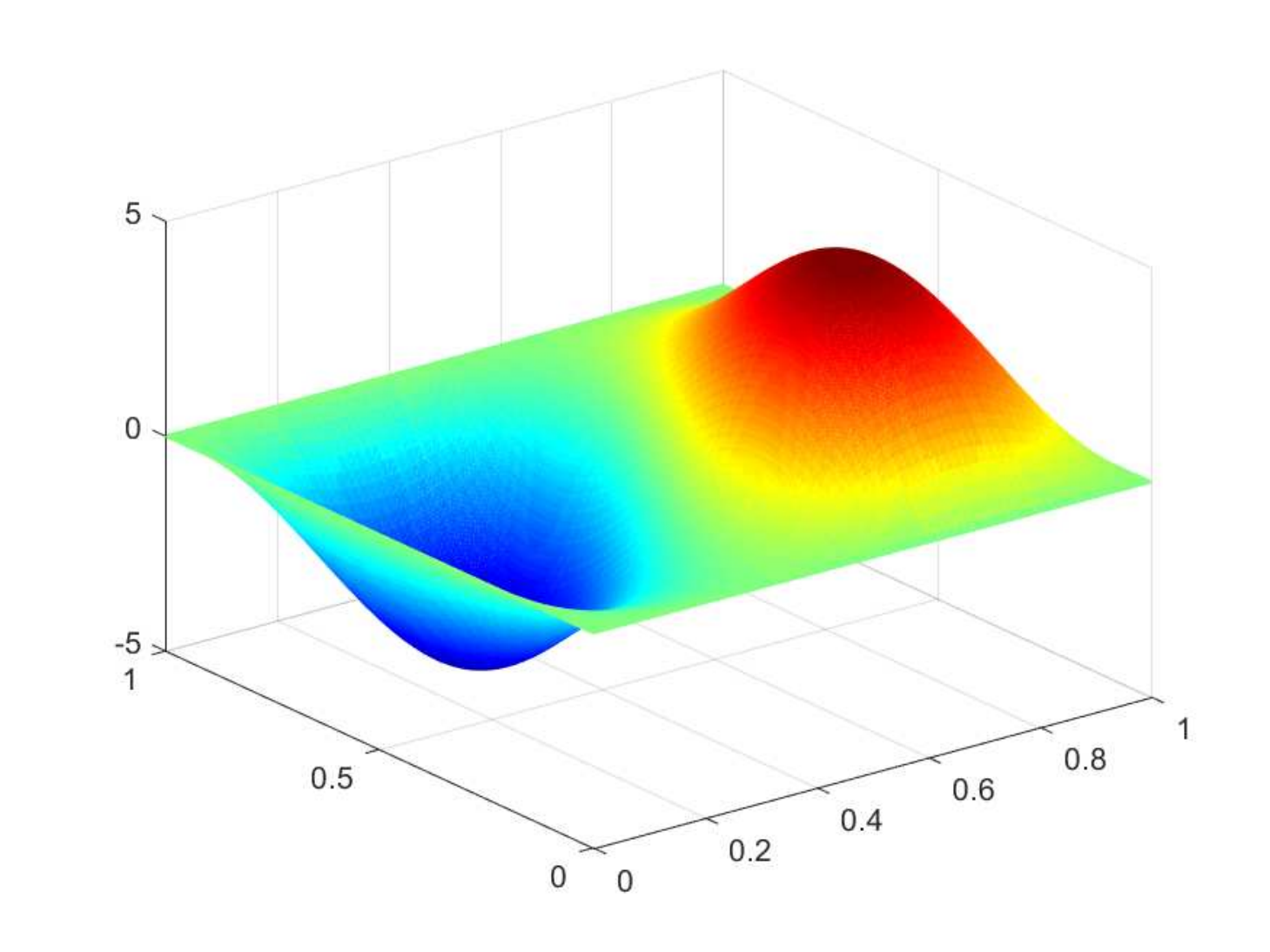}
	   \includegraphics[width=0.3\textwidth]{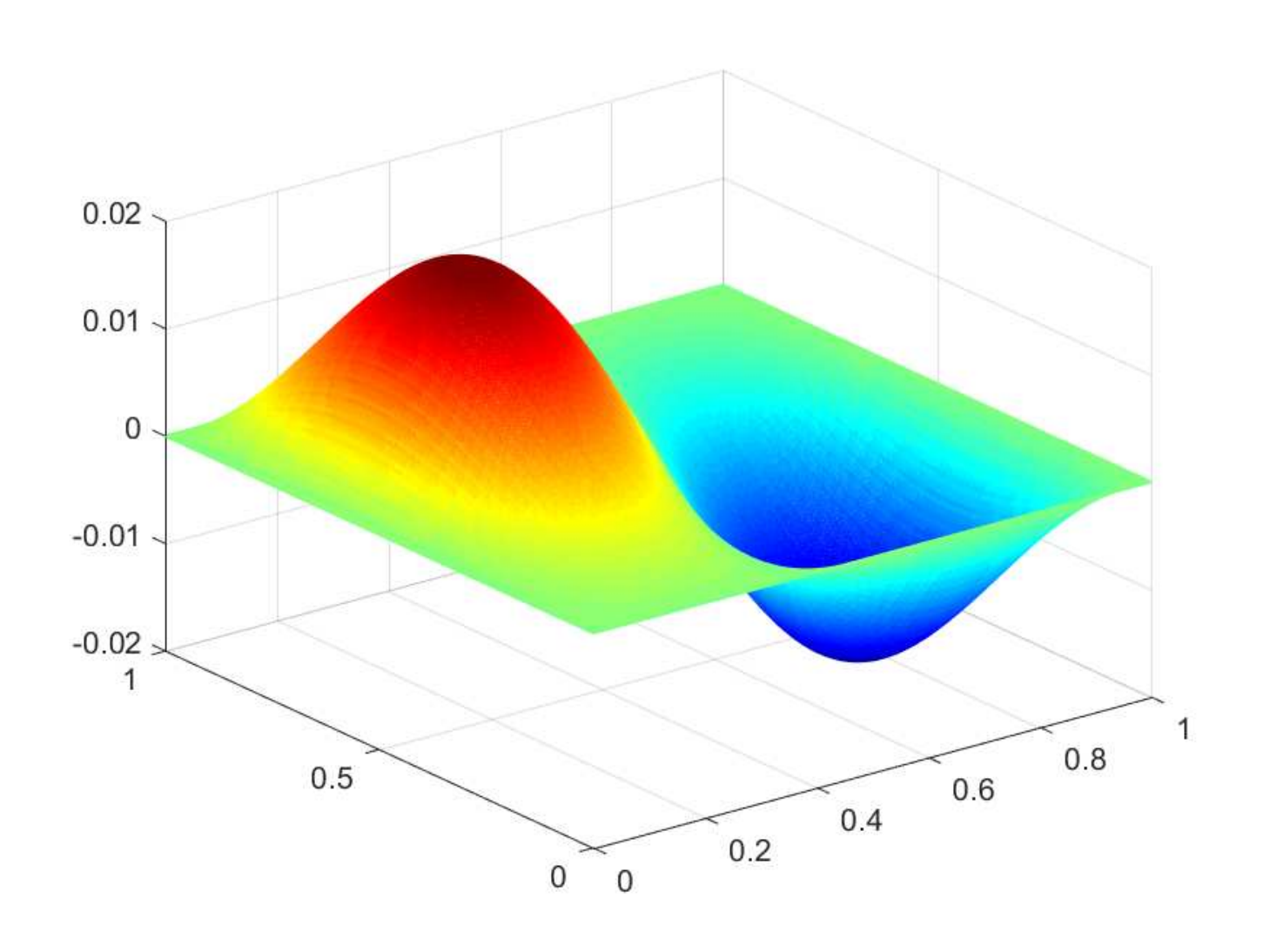}
		\includegraphics[width=0.3\textwidth]{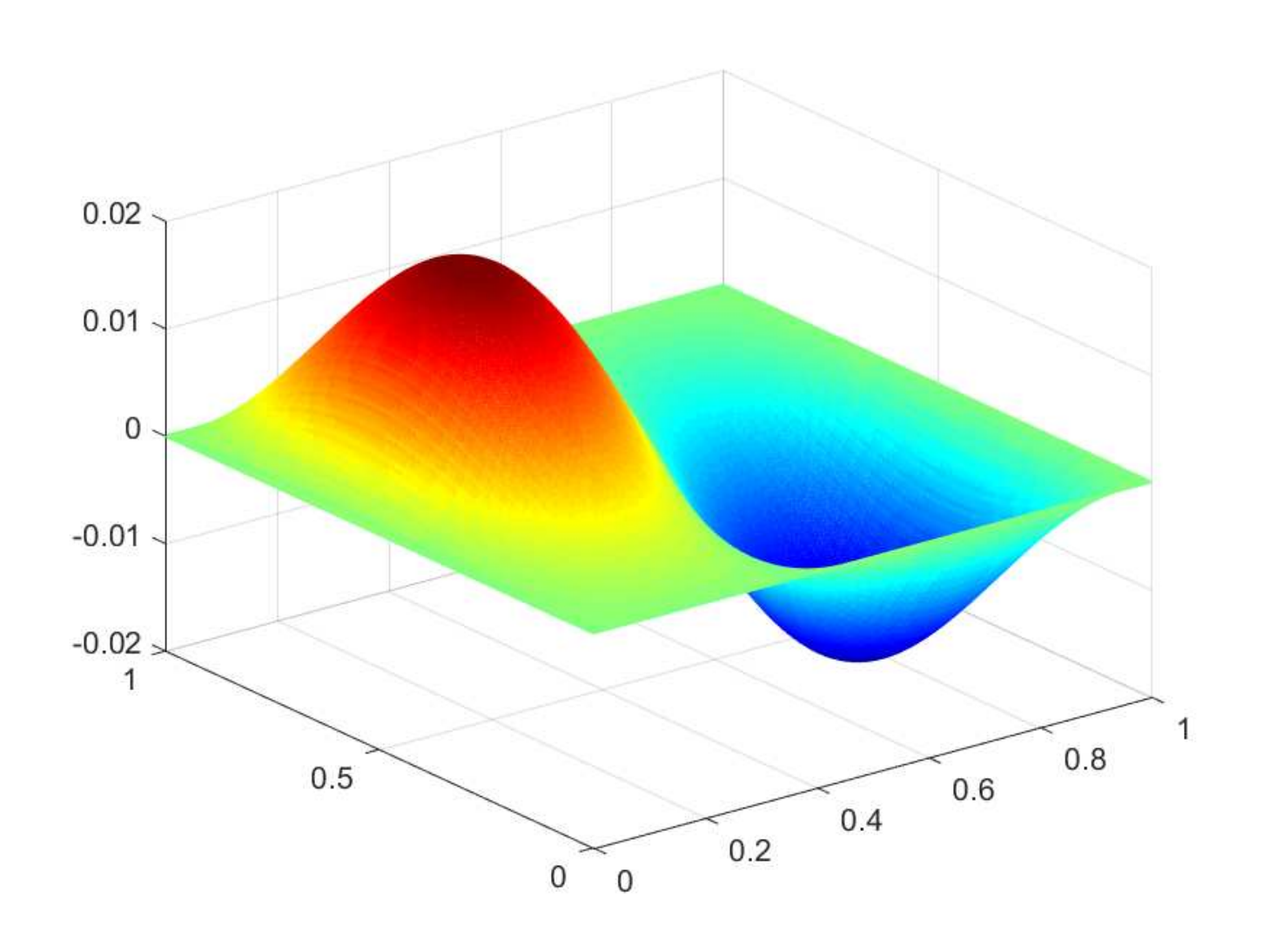}}
	\caption{Computed state $y^{\Delta t}_h$, error $y^{\Delta t}_h-y$ and $y^{\Delta t}_h-y_d$ (from left to right) at $t=0.25$ for Example 1.}
	\label{stateEx1_1}
\end{figure}
\begin{figure}[htpb]
	\centering{
		\includegraphics[width=0.3\textwidth]{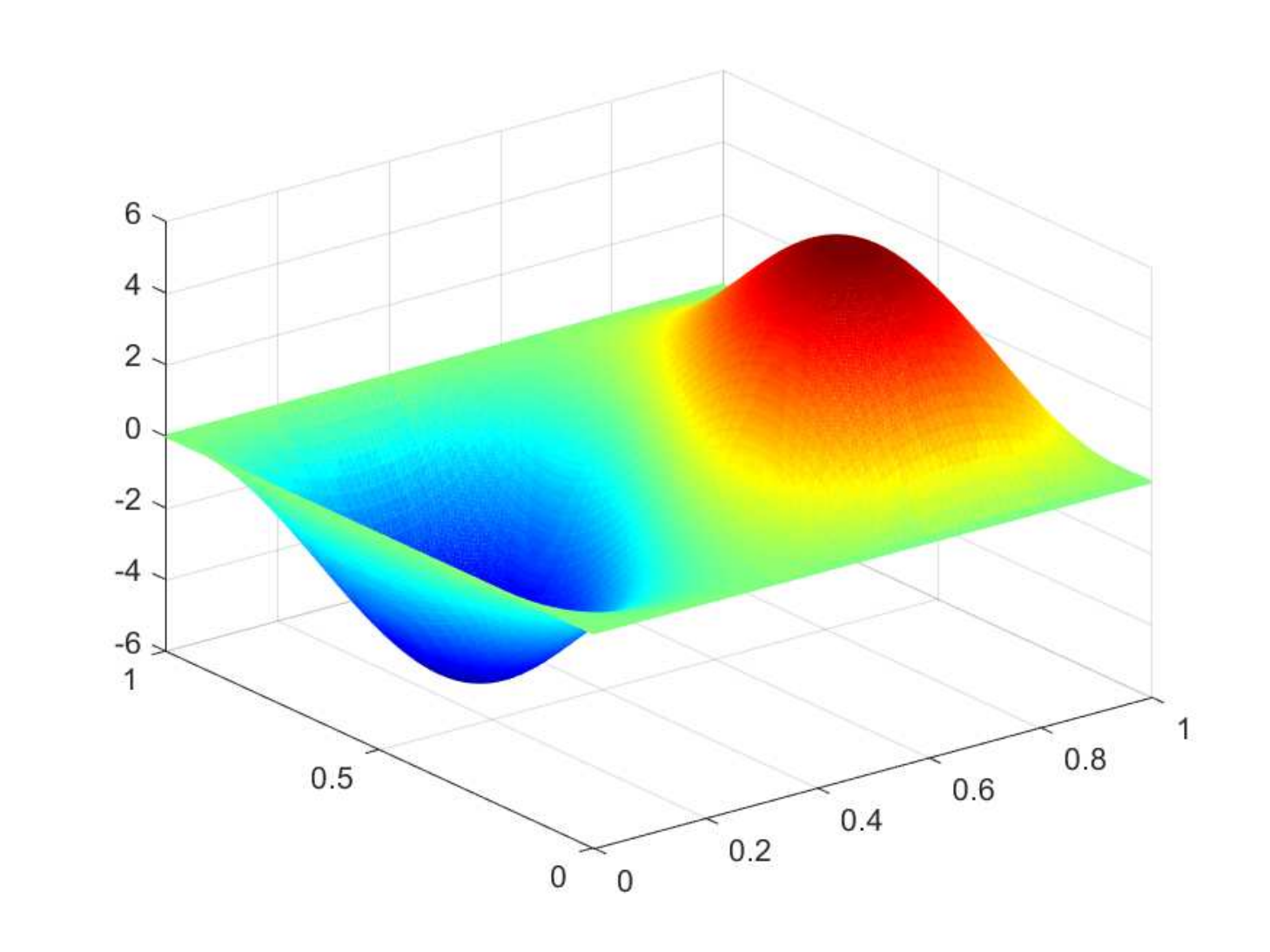}
		\includegraphics[width=0.3\textwidth]{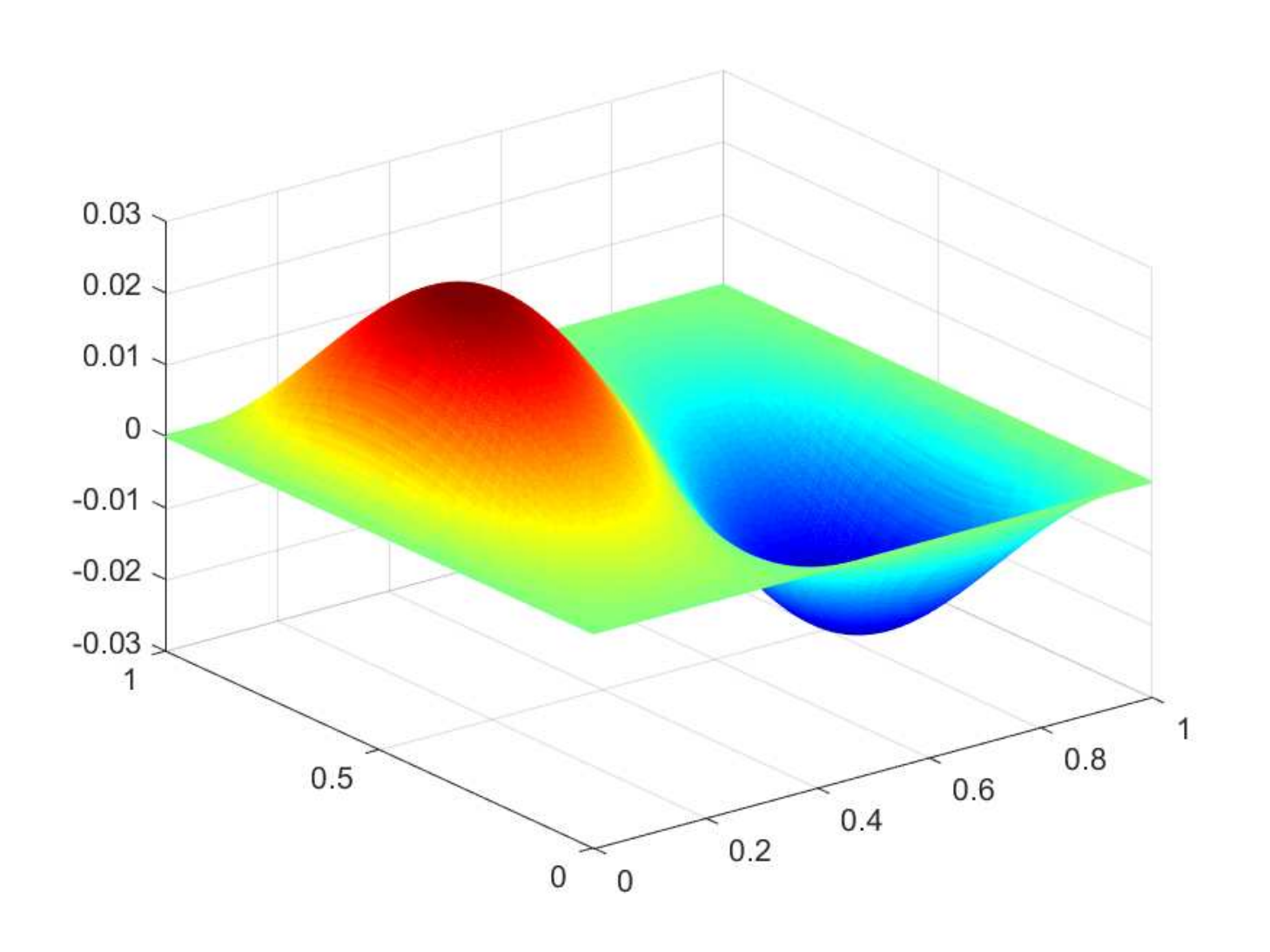}
		\includegraphics[width=0.3\textwidth]{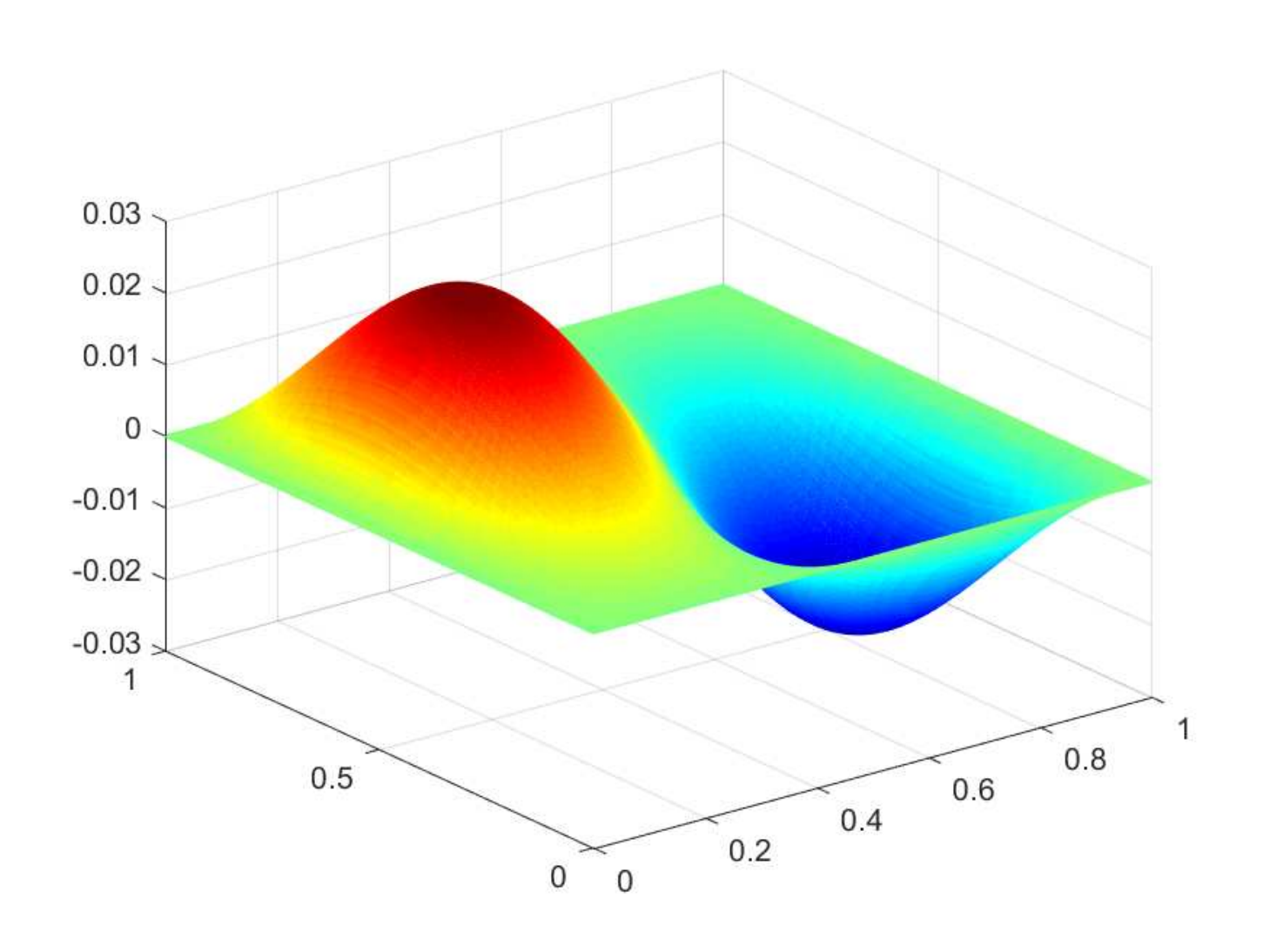}}
	\caption{Computed state $y^{\Delta t}_h$, error $y^{\Delta t}_h-y$ and $y^{\Delta t}_h-y_d$ (from left to right) at $t=0.5$ for Example 1.}
	\label{stateEx1_2}
\end{figure}
\begin{figure}[htpb]
	\centering{
		\includegraphics[width=0.3\textwidth]{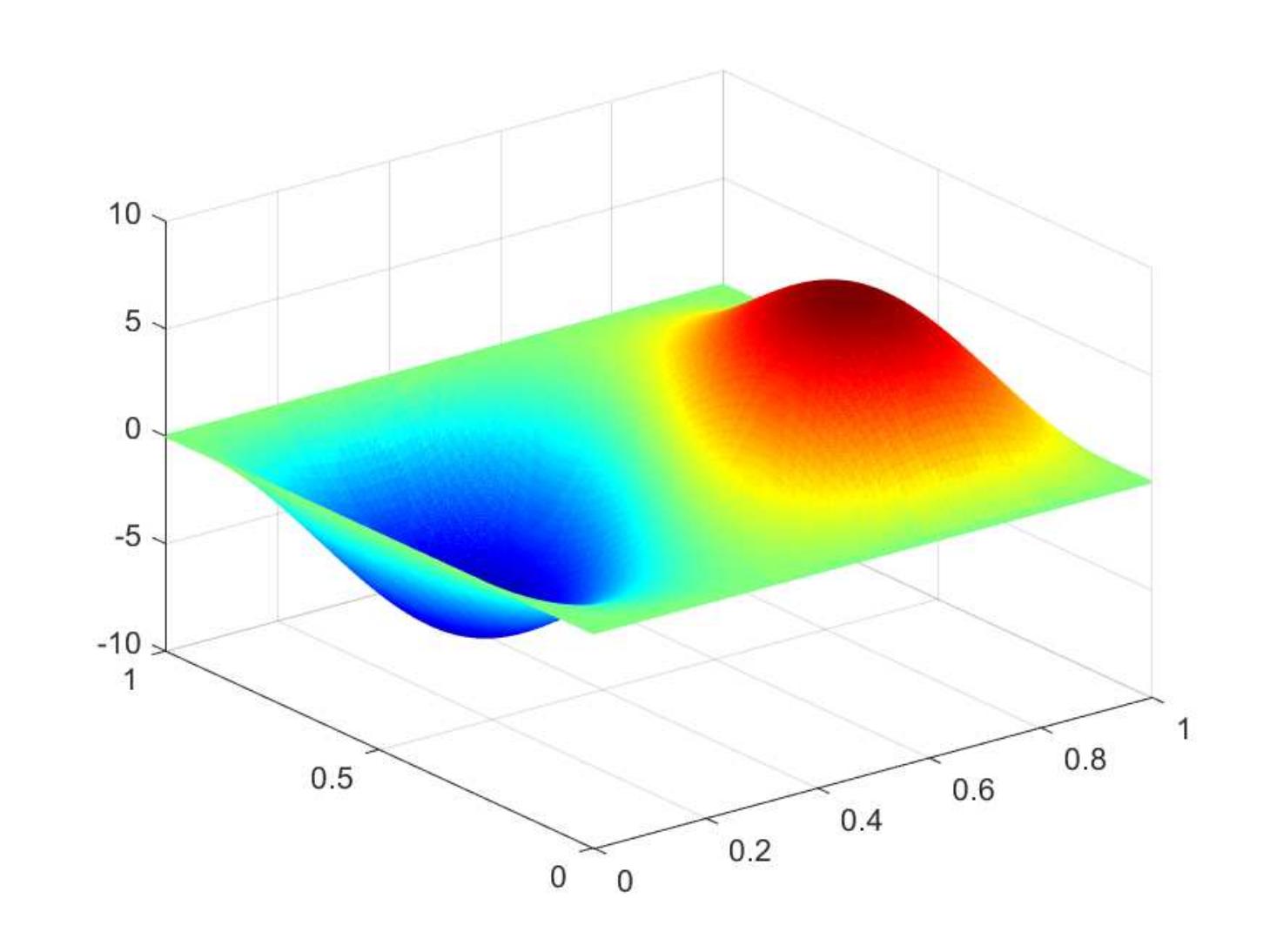}
		\includegraphics[width=0.3\textwidth]{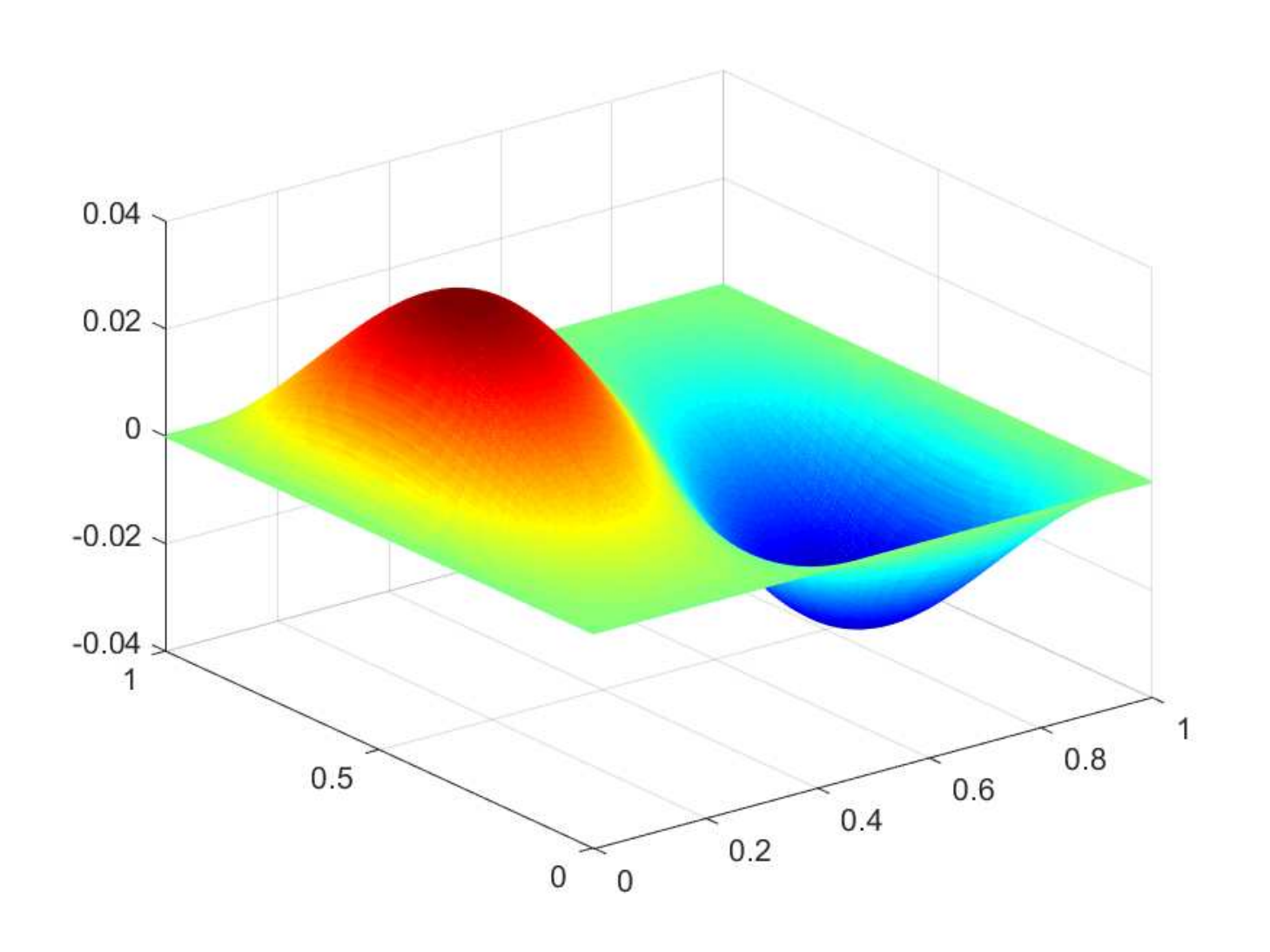}
		\includegraphics[width=0.3\textwidth]{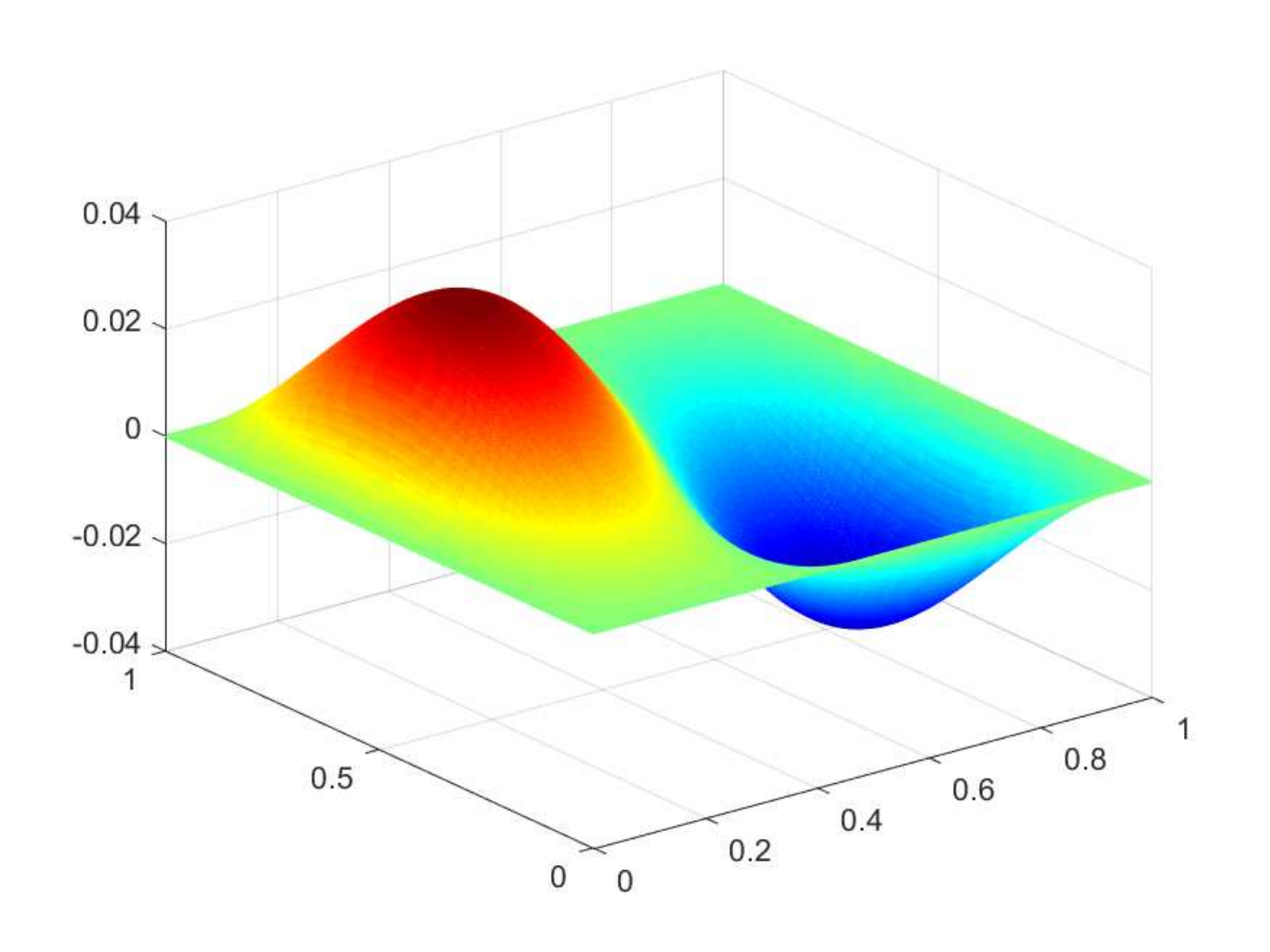}}
	\caption{Computed state $y^{\Delta t}_h$, error $y^{\Delta t}_h-y$ and $y^{\Delta t}_h-y_d$ (from left to right) at $t=0.75$ for Example 1.}
	\label{stateEx1_3}
\end{figure}
	\begin{figure}[htpb]
	\centering{
		\includegraphics[width=0.45\textwidth]{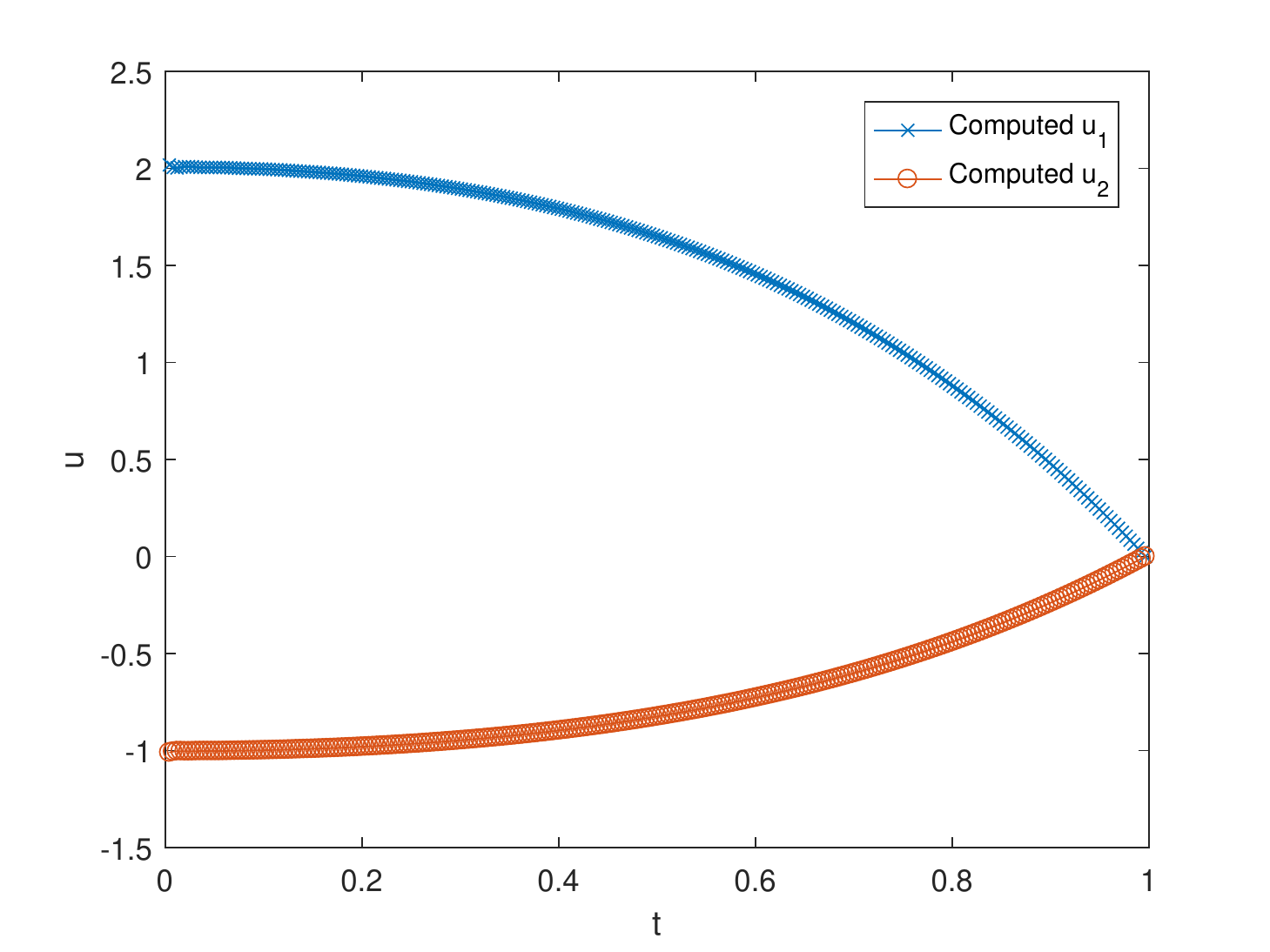}
	  \includegraphics[width=0.45\textwidth]{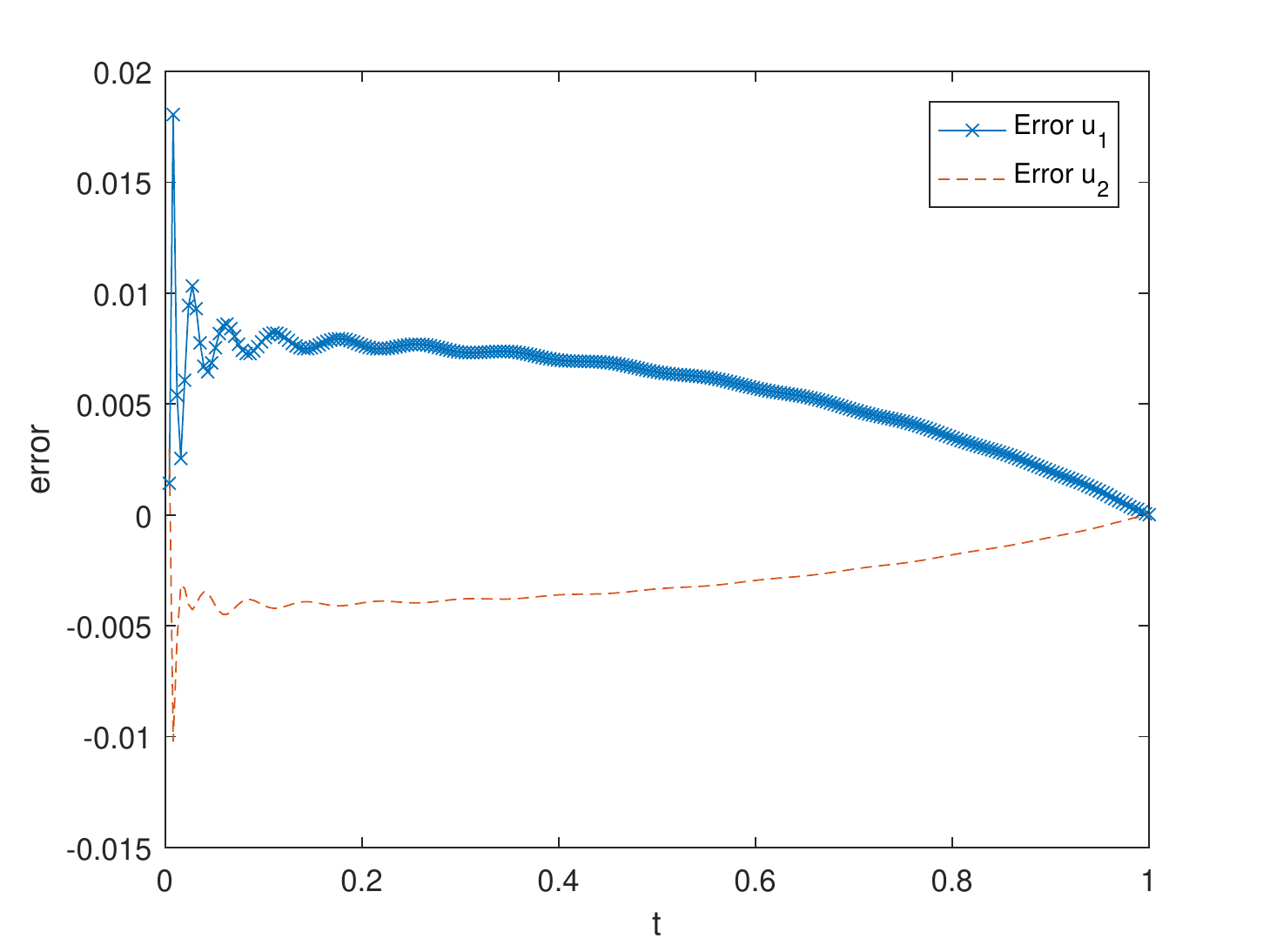}
	}
	\caption{Computed optimal control $\bm{u}^{\Delta t}$ and error $\bm{u}^{\Delta t}-\bm{u}$ for Example 1.}
	\label{controlEx1}
\end{figure}
Furthermore, we tested the proposed CG algorithm (\textbf{DI})--(\textbf{DV}) with $h=\frac{1}{2^6}$ and $\Delta t=\frac{1}{2^7}$ for different penalty parameter $\alpha_1$. The results reported in Table \ref{reg_EX1} show that the performance of the proposed CG algorithm is robust with respect to the penalty parameter, at least for the example being considered. We also observe that as $\alpha_1$ increases, the value of  $\frac{\|y_h^{\Delta t}-y_d\|_{L^2(Q)}}{\|y_d\|_{{L^2(Q)}}}$ decreases. This implies that, as expected, the computed state $y_h^{\Delta t}$ is closer to the target function $y_d$ when the penalty parameter gets larger.

\begin{table}[htpb]
	{\small
	\centering
	\caption{Results of the CG algorithm (\textbf{DI})--(\textbf{DV}) with different $\alpha_1$ for Example 1.} \begin{tabular}{|c|c|c|c|c|c|}
		\hline $\alpha_1$ &$Iter$& $CPU(s)$&$\|\bm{u}^{\Delta t}-\bm{u}\|_{L^2(0,T;\mathbb{R}^2)}$&$\|y_h^{\Delta t}-y\|_{L^2(Q)}$& $\frac{\|y_h^{\Delta t}-y_d\|_{L^2(Q)}}{\|y_d\|_{{L^2(Q)}}}$  \\
			\hline $10^4$   & 46 & 126.0666&1.3872$\times 10^{-2}$ &2.5739$\times 10^{-3}$  & 8.7666$\times 10^{-4}$ 
		\\
		\hline $10^5$   & 48 & 126.4185 &1.3908$\times 10^{-2}$ &2.5739$\times 10^{-3}$  &8.6596$\times 10^{-4}$ 
	\\
	\hline $10^6$   &48&128.2346
	&1.3912$\times 10^{-2}$
	& 2.5739$\times 10^{-3}$
	&8.5623$\times 10^{-4}$ 
	\\
		\hline $10^7$   &48 & 127.1858&1.3912$\times 10^{-2}$&2.5739$\times 10^{-3}$  &8.5612$\times 10^{-4}$  
		\\
		\hline $10^8$& 48 & 124.1160&1.3912$\times 10^{-2}$&2.5739$\times 10^{-3}$ &8.5610$\times 10^{-4}$ 
		\\
		\hline
	\end{tabular}
	\label{reg_EX1}
}
\end{table}

\medskip
\noindent\textbf{Example 2.}  As in Example 1, we consider the bilinear optimal control problem (BCP) on the domain $Q=\Omega\times(0,T)$ with $\Omega=(0,1)^2$ and $T=1$. Now, we take the control $\bm{v}(x,t)$ in the infinite-dimensional space $\mathcal{U}=\{\bm{v}|\bm{v}\in [L^2(Q)]^2, \nabla\cdot\bm{v}=0\}.$ 
We set $\alpha_2=0$ in (\ref{objective_functional}), $\nu=1$ and $a_0=1$ in (\ref{state_equation}), and consider the following tracking-type bilinear optimal control problem:
\begin{equation}\label{model_ex2}
\min_{\bm{v}\in\mathcal{U}}J(\bm{v})=\frac{1}{2}\iint_Q|\bm{v}|^2dxdt+\frac{\alpha_1}{2}\iint_Q|y-y_d|^2dxdt,
\end{equation}
where $y$ is obtained from $\bm{v}$ via the solution of the state equation (\ref{state_equation}).

First, we let
\begin{eqnarray*}
	&&y=e^t(-3\sin(2\pi x_1)\sin(\pi x_2)+1.5\sin(\pi x_1)\sin(2\pi x_2)),\\
	&&p=(T-t)\sin \pi x_1 \sin \pi x_2,
	~
	\text{and} ~\bm{u}=P_{\mathcal{U}}(p\nabla y),
\end{eqnarray*}
where $P_{\mathcal{U}}(\cdot)$ is the projection onto the set $\mathcal{U}$.

We further set
\begin{eqnarray*}
	&&f=\frac{\partial y}{\partial t}-\nabla^2y+{\bm{u}}\cdot \nabla y+y,
	\quad\phi=-3\sin(2\pi x_1)\sin(\pi x_2)+1.5\sin(\pi x_1)\sin(2\pi x_2),\\
	&&y_d=y-\frac{1}{\alpha_1}\left(-\frac{\partial p}{\partial t}
	-\nabla^2p-\bm{u}\cdot\nabla p +p\right),\quad g=0.
\end{eqnarray*}
Then, it is easy to show that $\bm{u}$ is a solution point of the problem (\ref{model_ex2}). We note that  $\bm{u}=P_{\mathcal{U}}(p\nabla y)$ has no analytical solution and it can only be solved numerically. Here, we solve $\bm{u}=P_{\mathcal{U}}(p\nabla y)$ by the preconditioned CG algorithm (\textbf{DG1})--(\textbf{DG5}) with $h=\frac{1}{2^9}$ and $\Delta t=\frac{1}{2^{10}}$, and use the resulting control $\bm{u}$ as a reference solution for the example we considered.
	\begin{figure}[htpb]
	\centering{
		\includegraphics[width=0.3\textwidth]{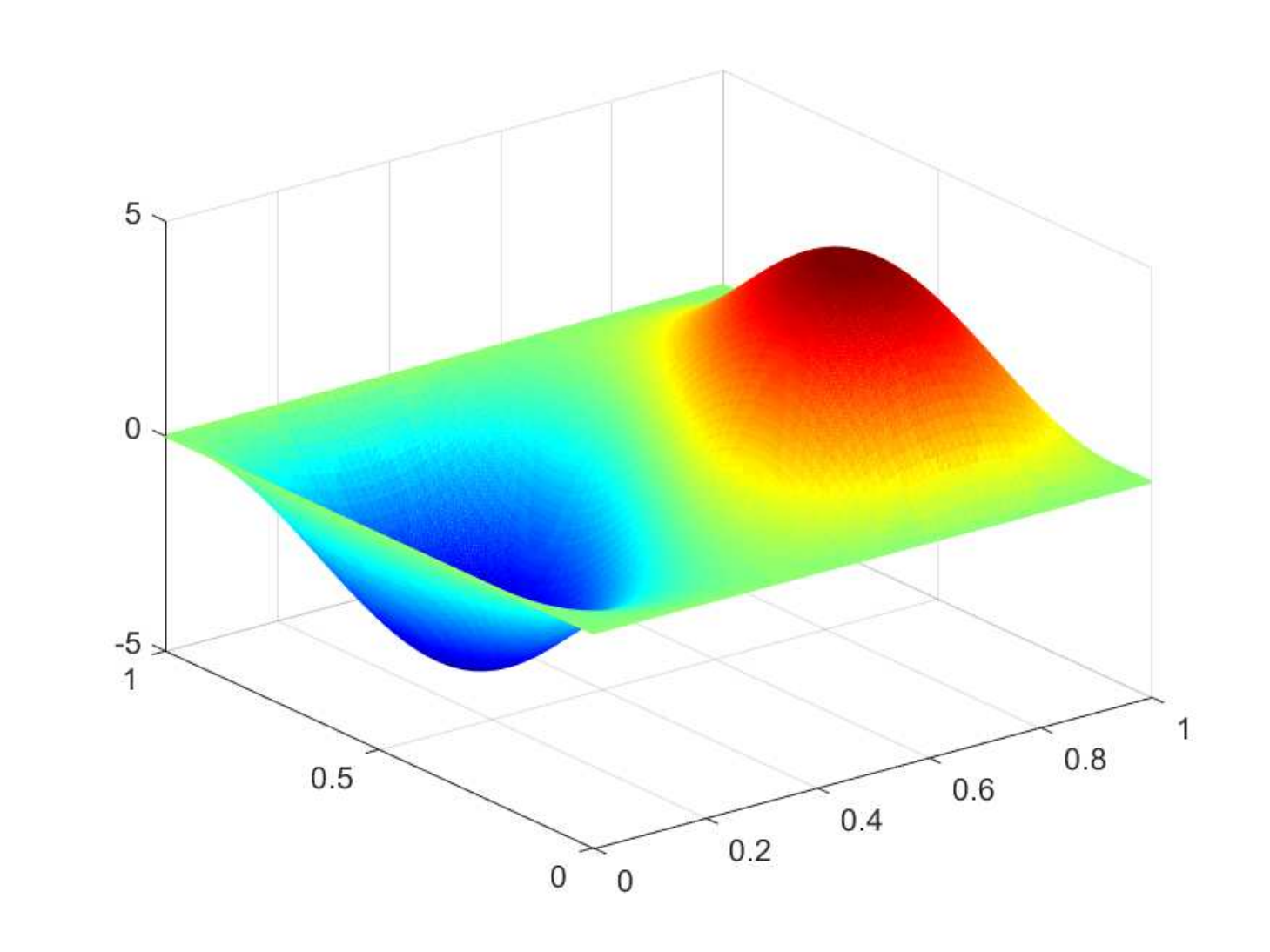}
		\includegraphics[width=0.3\textwidth]{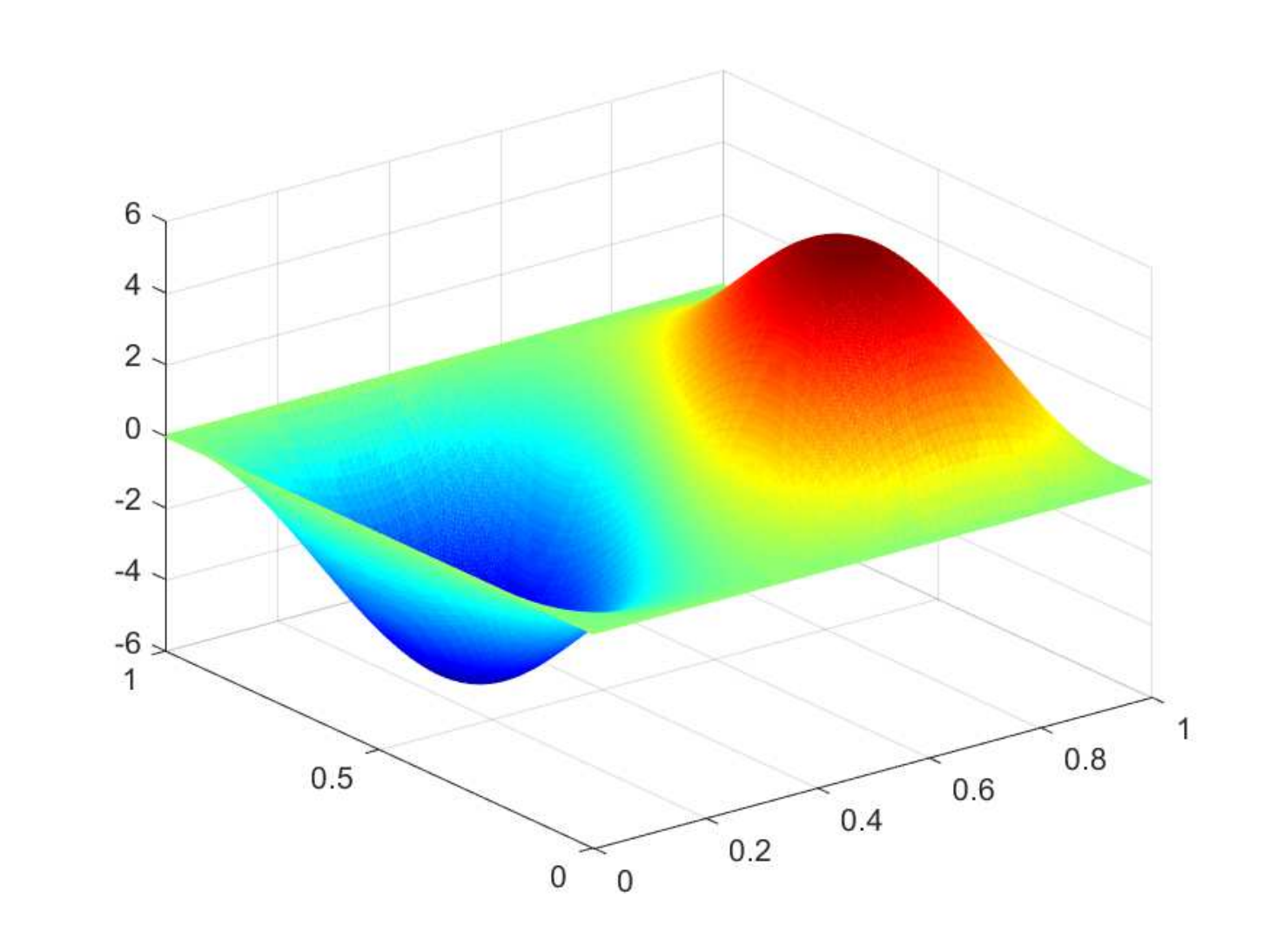}
		\includegraphics[width=0.3\textwidth]{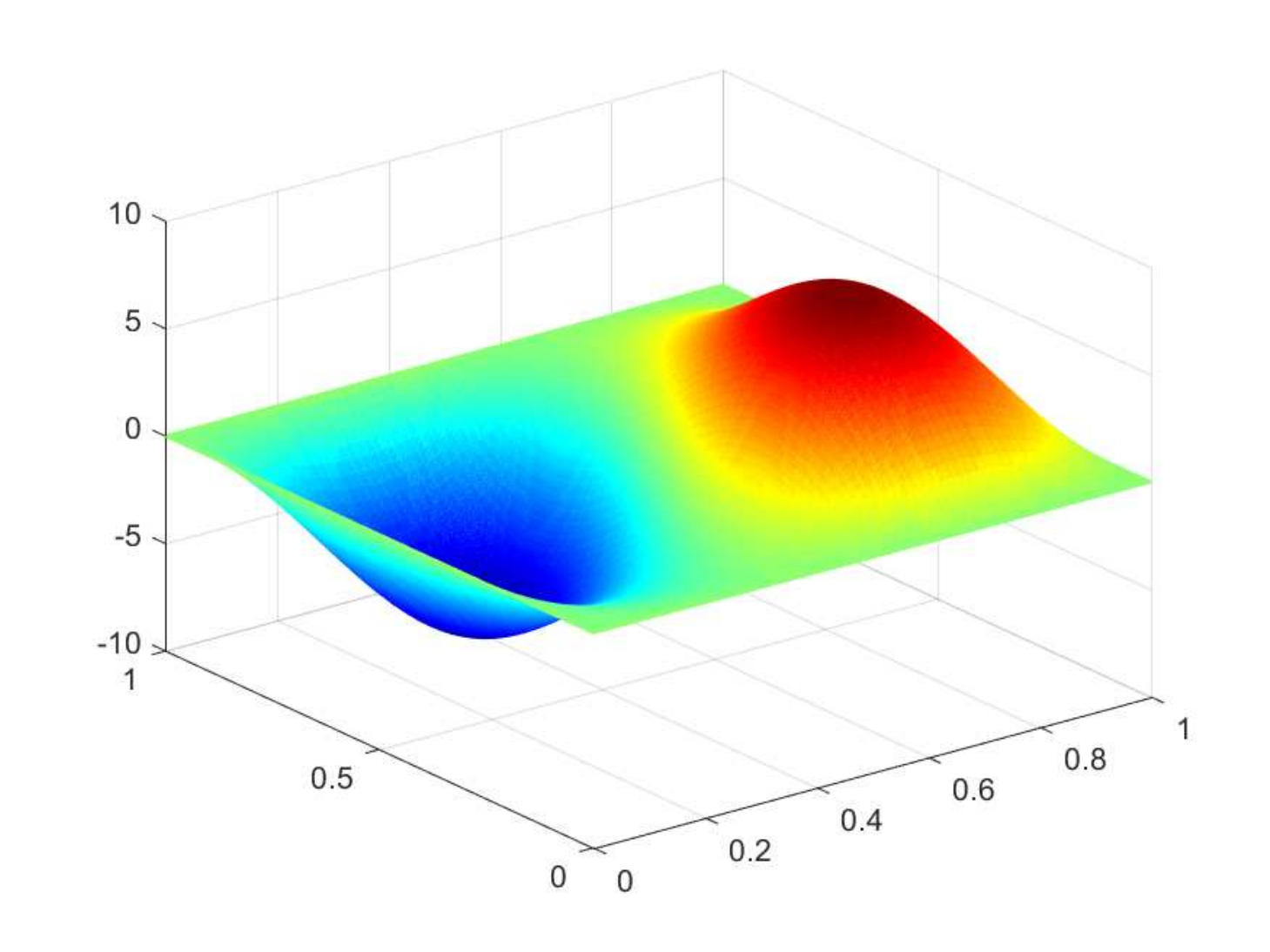}
	}
	\caption{The target function $y_d$ with $h=\frac{1}{2^7}$ and $\Delta t=\frac{1}{2^8}$  at $t=0.25, 0.5$ and $0.75$ (from left to right) for Example 2.}
	\label{target_ex2}
\end{figure}

The stopping criteria of the outer CG algorithm (\textbf{DI})--(\textbf{DV}) and the inner preconditioned CG algorithm (\textbf{DG1})--(\textbf{DG5}) are respectively set as
$$
\frac{\Delta t\sum_{n=1}^N\int_\Omega|\bm{g}_n^{k+1}|^2dx}{\Delta t\sum_{n=1}^N\int_\Omega|\bm{g}_n^{0}|^2dx}\leq 5\times10^{-8}, ~\text{and}~\frac{\int_\Omega|\nabla r^{k+1}|^2dx}{\max\{1,\int_\Omega|\nabla r^0|^2dx\}}\leq 10^{-8}.
$$
The initial values are chosen as $\bm{u}^0=(0,0)^\top$ and $\lambda^0=0$; and we denote by $\bm{u}_h^{\Delta t}$ and $y_h^{\Delta t}$ the computed control and state, respectively. 

First, we take $h=\frac{1}{2^i}, i=6,7,8$, $\Delta t=\frac{h}{2}$, $\alpha_1=10^6$, and implement the proposed nested CG algorithm (\textbf{DI})--(\textbf{DV}) for solving the problem (\ref{model_ex2}). The numerical results reported in Table \ref{tab:mesh_EX2} show that the CG algorithm converges fast and is robust with respect to different mesh sizes. In addition, the preconditioned CG algorithm (\textbf{DG1})--(\textbf{DG5}) converges within 10 iterations for all cases and thus is efficient for computing the gradient $\{\bm{g}_n\}_{n=1}^N$.
We also observe that the target function $y_d$ has been reached within a good accuracy. Similar comments hold for the approximation of the optimal control $\bm{u}$ and of the state $y$ of problem (\ref{model_ex2}).

	\begin{table}[htpb]
		{\small
	\centering
	\caption{Results of the nested CG algorithm (\textbf{DI})--(\textbf{DV}) with different $h$ and $\Delta t$ for Example 2.}
	\begin{tabular}{|c|c|c|c|c|c|c|}
		\hline Mesh sizes &{{$Iter_{CG}$}}&$MaxIter_{PCG}$& $\|\bm{u}_h^{\Delta t}-\bm{u}\|_{L^2(Q)}$&$\|y_h^{\Delta t}-y\|_{L^2(Q)}$& $\frac{\|y_h^{\Delta t}-y_d\|_{L^2(Q)}}{\|y_d\|_{{L^2(Q)}}}$
		\\
		\hline $h=1/2^6,\Delta t=1/2^7$   &443&9&3.7450$\times 10^{-3}$& 9.7930$\times 10^{-5}$&1.0906$\times 10^{-6}$  
		\\
		\hline $h=1/2^7,\Delta t=1/2^8$   &410&9&1.8990$\times 10^{-3}$&  1.7423$\times 10^{-5}$ & 3.3863$\times 10^{-7}$  
		\\
		\hline $h=1/2^8,\Delta t=1/2^9$& 405&8 &1.1223$\times 10^{-3}$ &4.4003$\times 10^{-6}$  &1.0378$\times 10^{-7}$ 
		\\
		\hline
	\end{tabular}
	\label{tab:mesh_EX2}
}

\end{table}

Taking $h=\frac{1}{2^7}$ and $\Delta t=\frac{1}{2^8}$, the computed state  $y_h^{\Delta t}$, the error $y_h^{\Delta t}-y$ and $y_h^{\Delta t}-y_d$ at $t=0.25,0.5,0.75$ are reported in Figures \ref{stateEx2_1}, \ref{stateEx2_2} and \ref{stateEx2_3}, respectively; and the computed control $\bm{u}_h^{\Delta t}$, the exact control $\bm{u}$, and the error $\bm{u}_h^{\Delta t}-\bm{u}$ at $t=0.25,0.5,0.75$ are presented in Figures \ref{controlEx2_1}, \ref{controlEx2_2} and \ref{controlEx2_3}.
\begin{figure}[htpb]
	\centering{
		\includegraphics[width=0.3\textwidth]{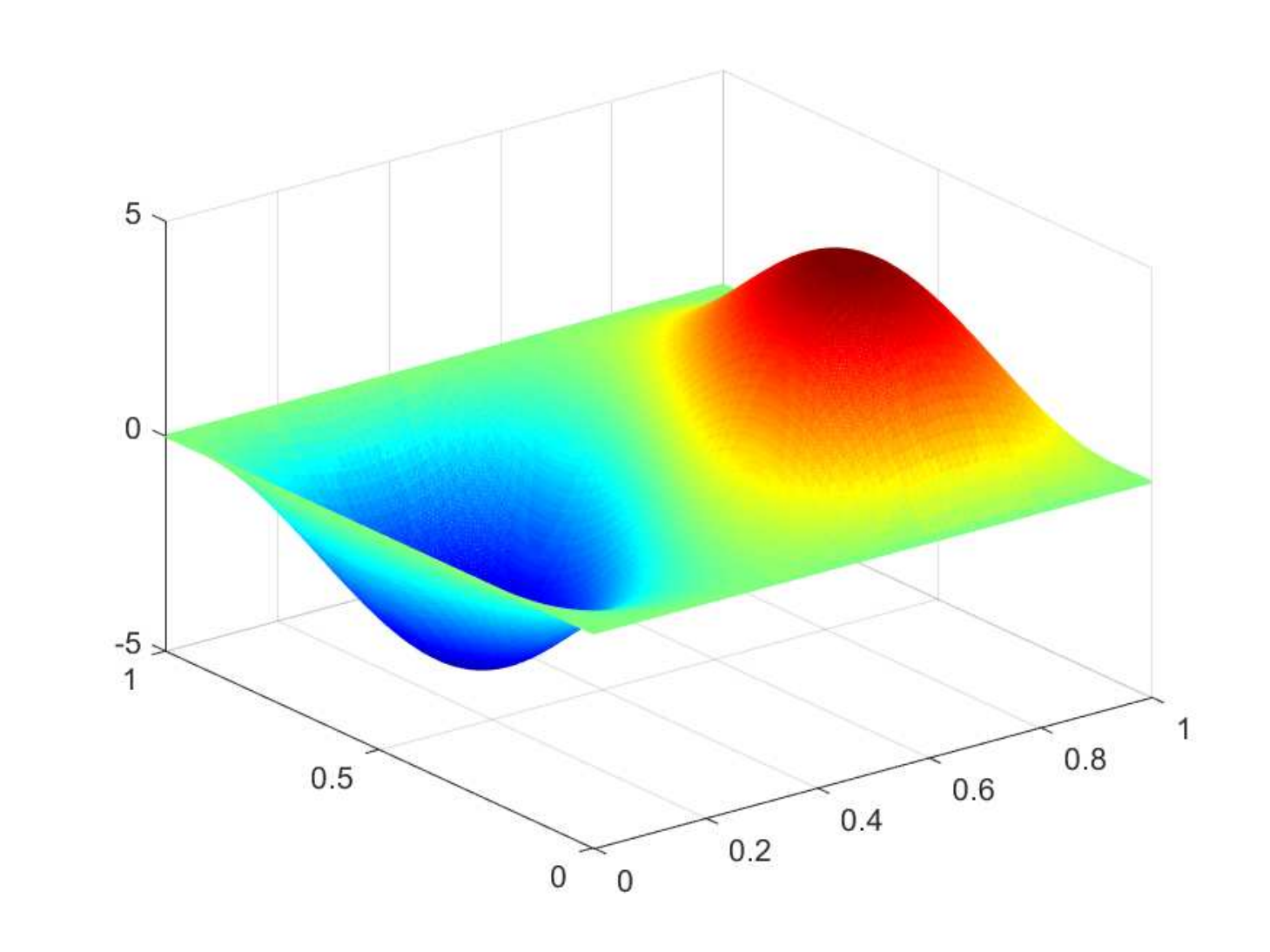}
		\includegraphics[width=0.3\textwidth]{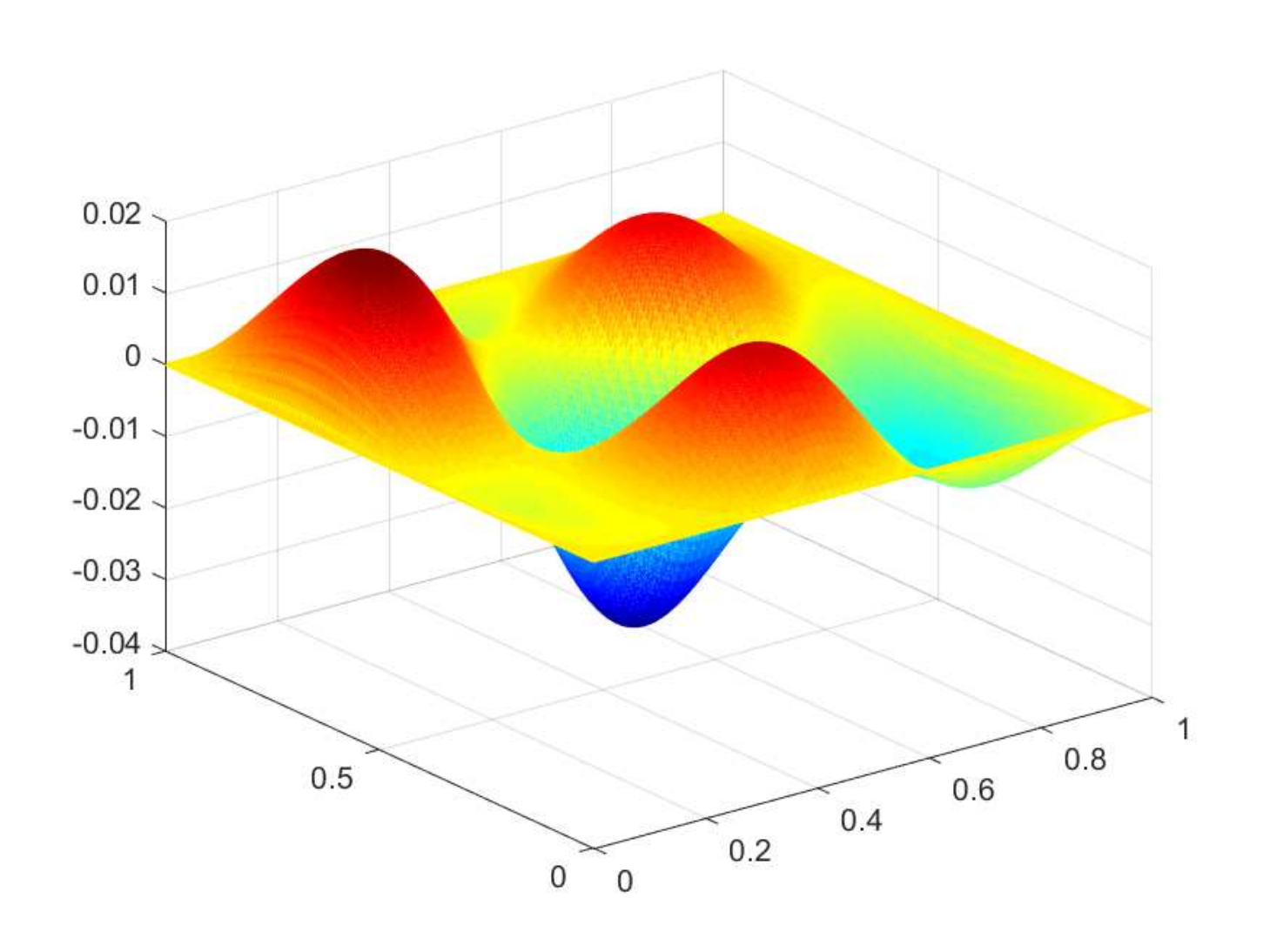}
		\includegraphics[width=0.3\textwidth]{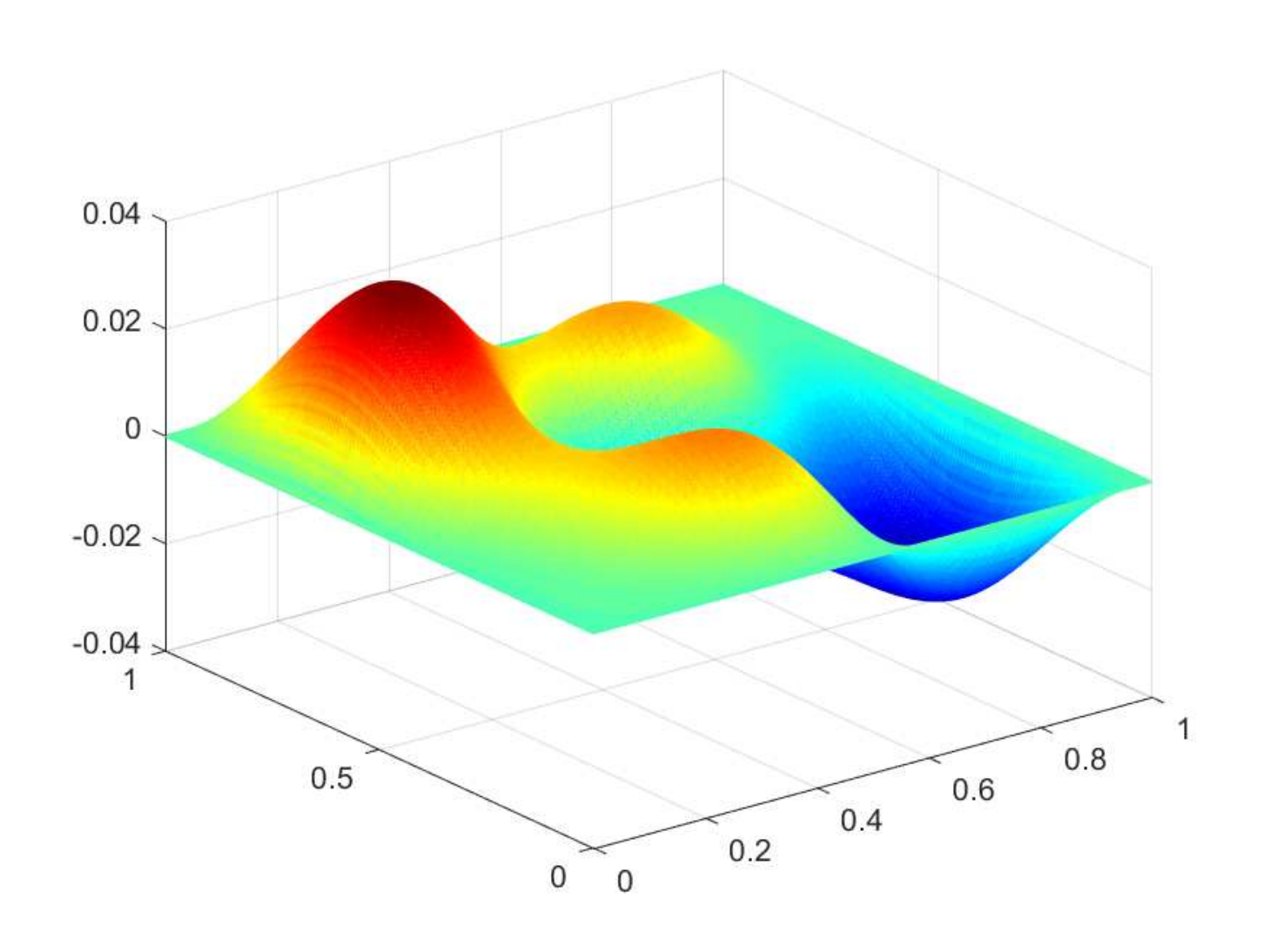}}
	\caption{Computed state $y^{\Delta t}_h$, error $y^{\Delta t}_h-y$ and $y^{\Delta t}_h-y_d$ with $h=\frac{1}{2^7}$ and $\Delta t=\frac{1}{2^8}$  (from left to right) at $t=0.25$ for Example 2.}
	\label{stateEx2_1}
\end{figure}
\begin{figure}[htpb]
	\centering{
		\includegraphics[width=0.3\textwidth]{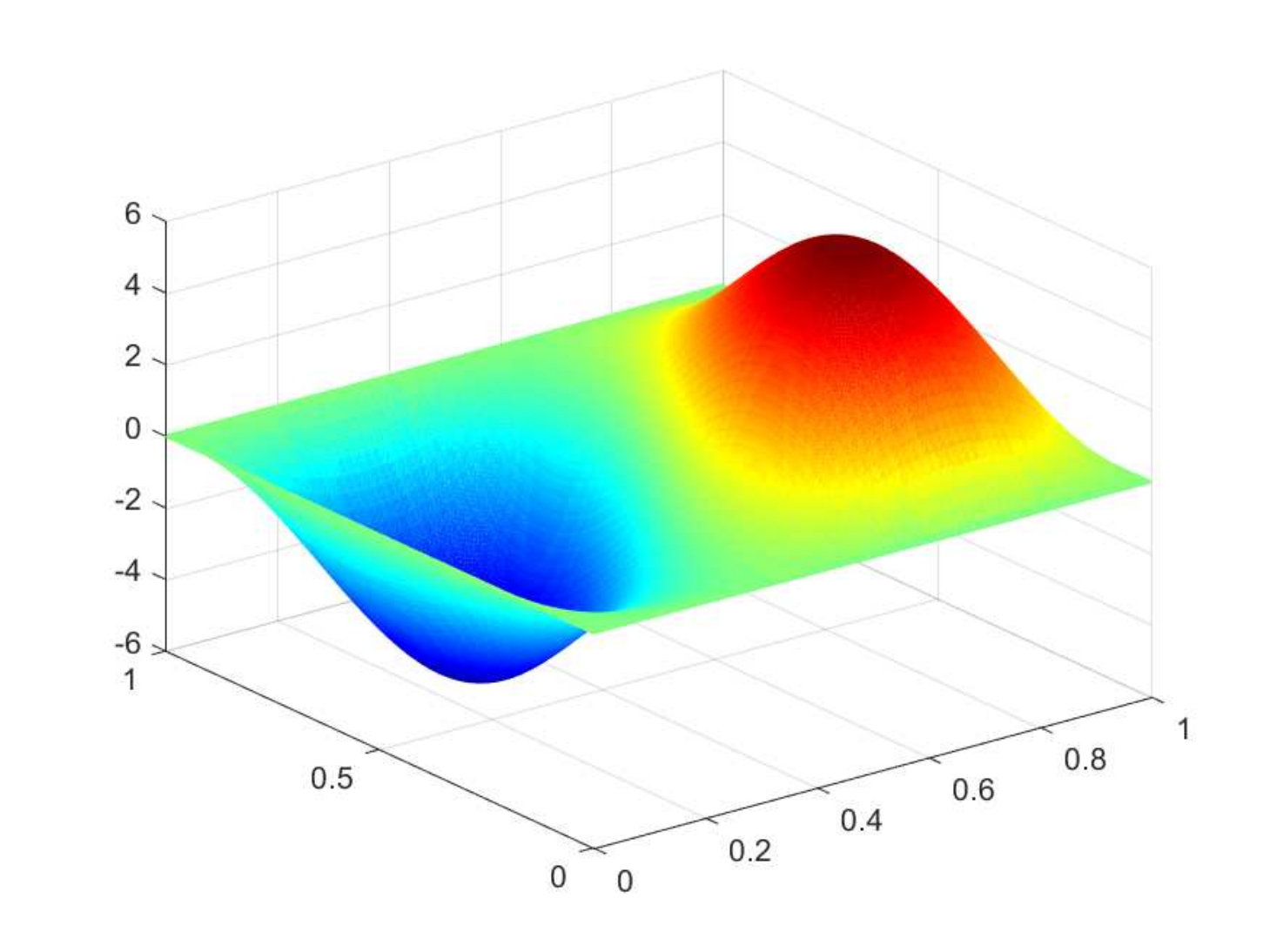}
		\includegraphics[width=0.3\textwidth]{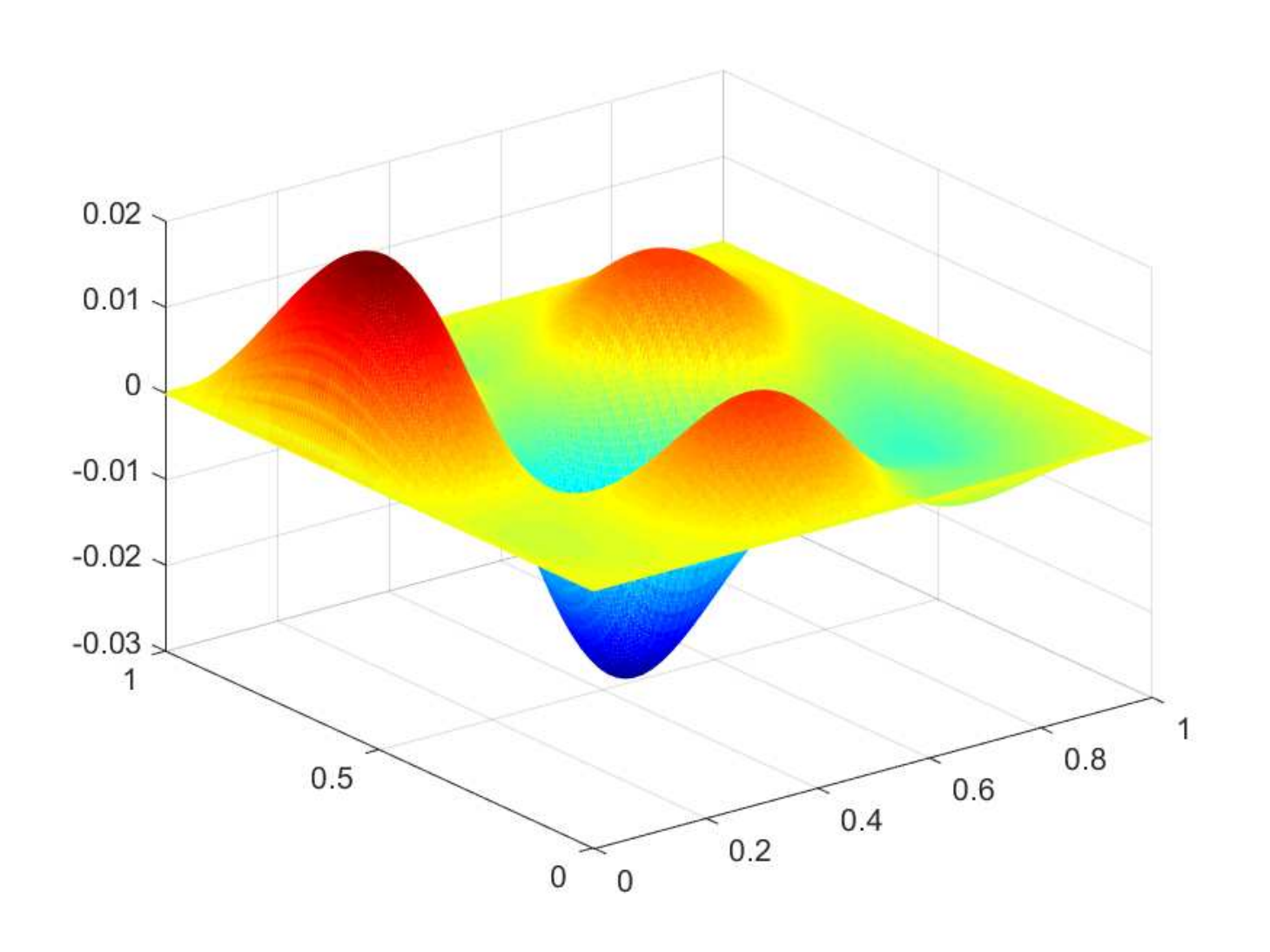}
		\includegraphics[width=0.3\textwidth]{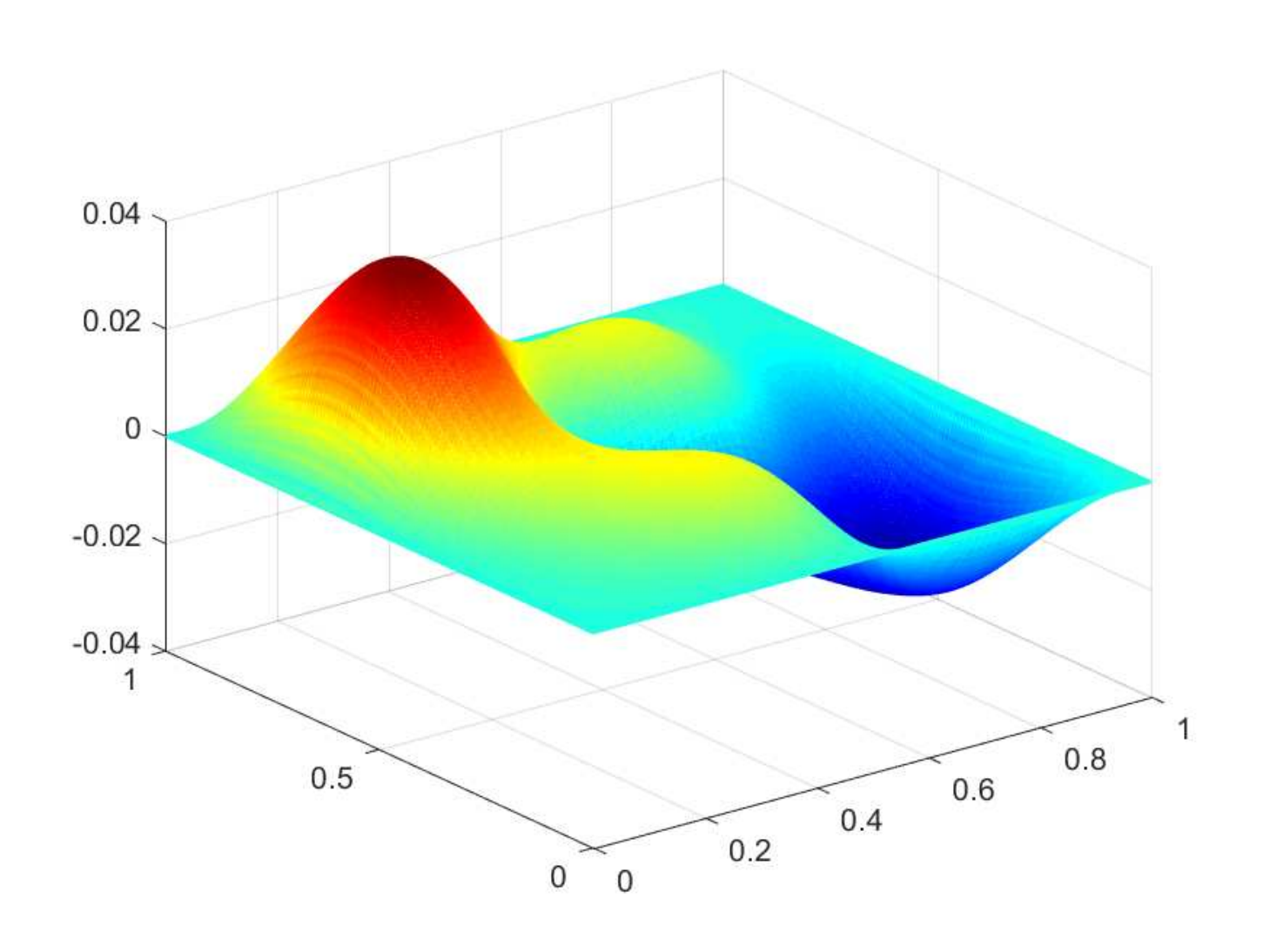}}
	\caption{Computed state $y^{\Delta t}_h$, error $y^{\Delta t}_h-y$ and $y^{\Delta t}_h-y_d$ with $h=\frac{1}{2^7}$ and $\Delta t=\frac{1}{2^8}$ (from left to right) at $t=0.5$ for Example 2.}
	\label{stateEx2_2}
\end{figure}
\begin{figure}[htpb]
	\centering{
		\includegraphics[width=0.3\textwidth]{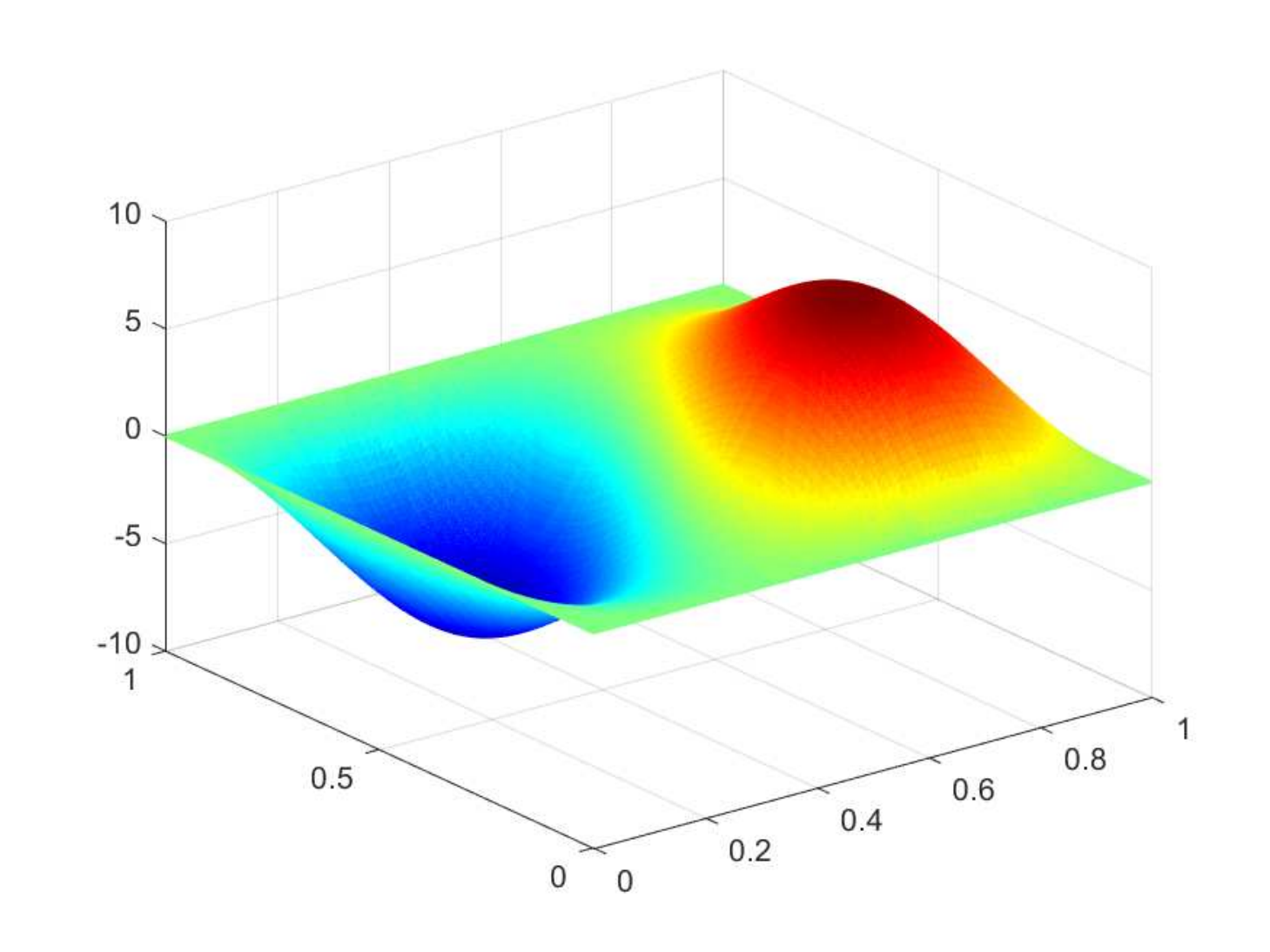}
		\includegraphics[width=0.3\textwidth]{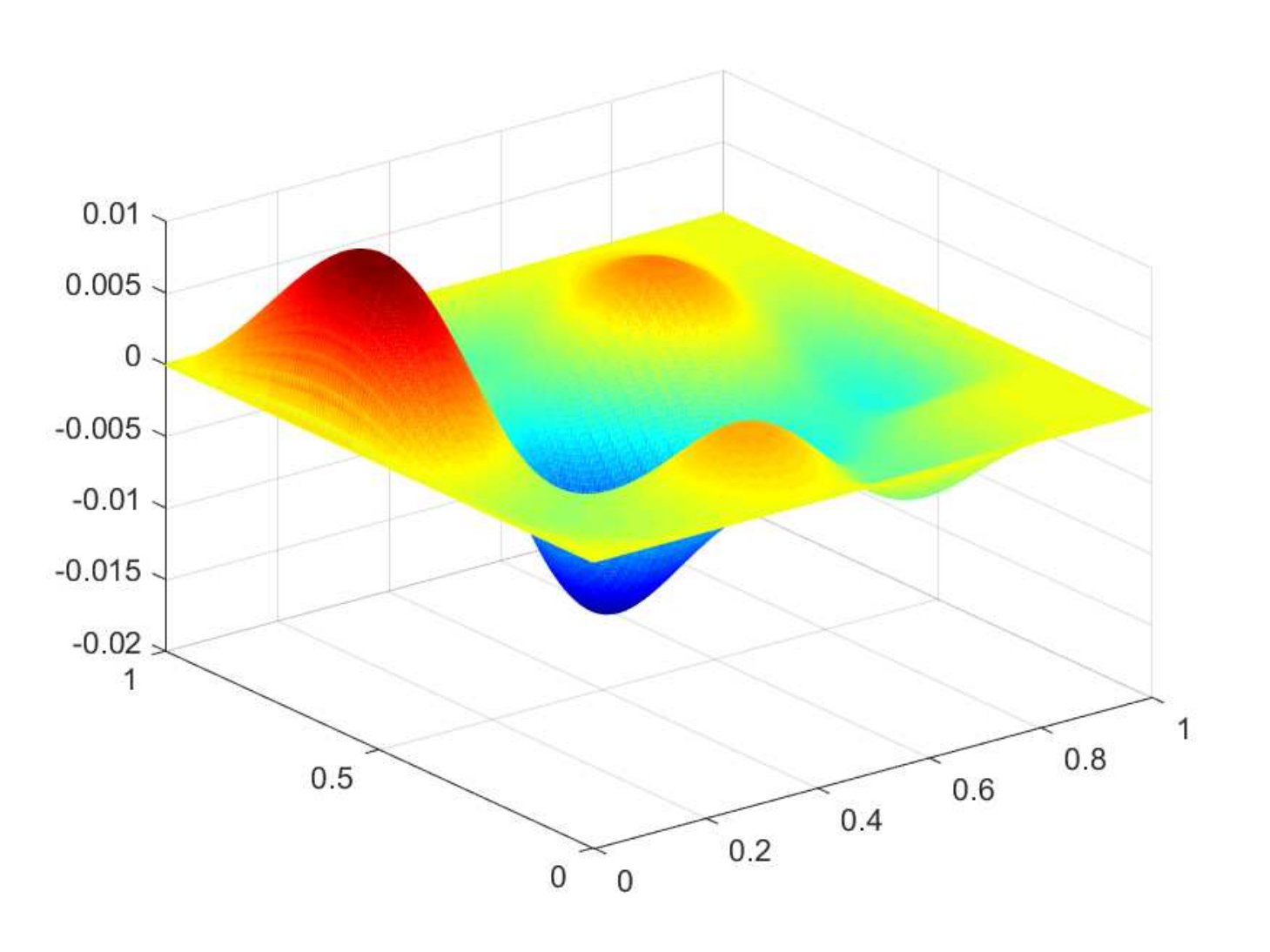}
		\includegraphics[width=0.3\textwidth]{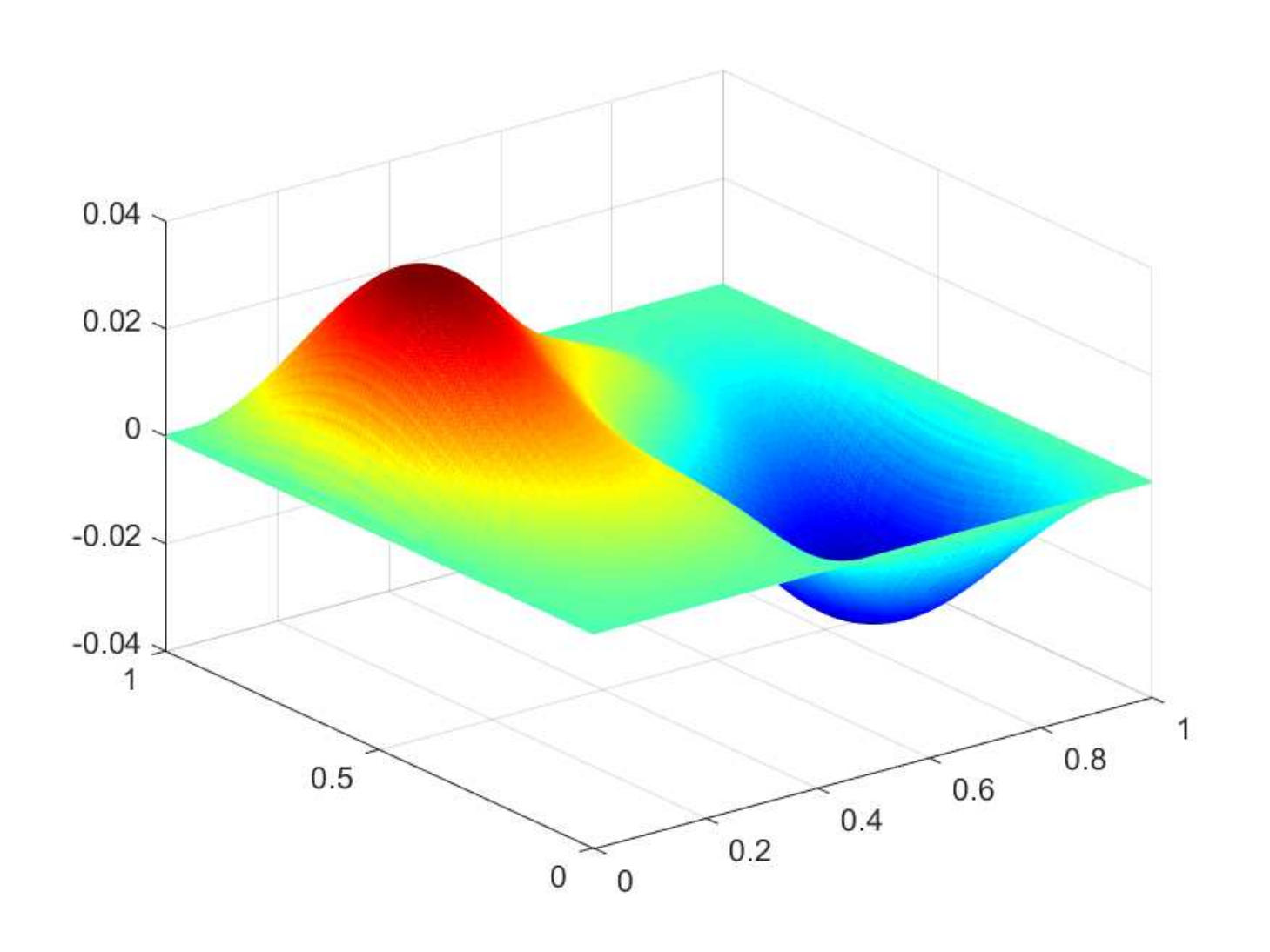}}
	\caption{Computed state $y^{\Delta t}_h$, error $y^{\Delta t}_h-y$ and $y^{\Delta t}_h-y_d$ with $h=\frac{1}{2^7}$ and $\Delta t=\frac{1}{2^8}$ (from left to right) at $t=0.75$ for Example 2.}
	\label{stateEx2_3}
\end{figure}

\begin{figure}[htpb]
	\centering{
		\includegraphics[width=0.45\textwidth]{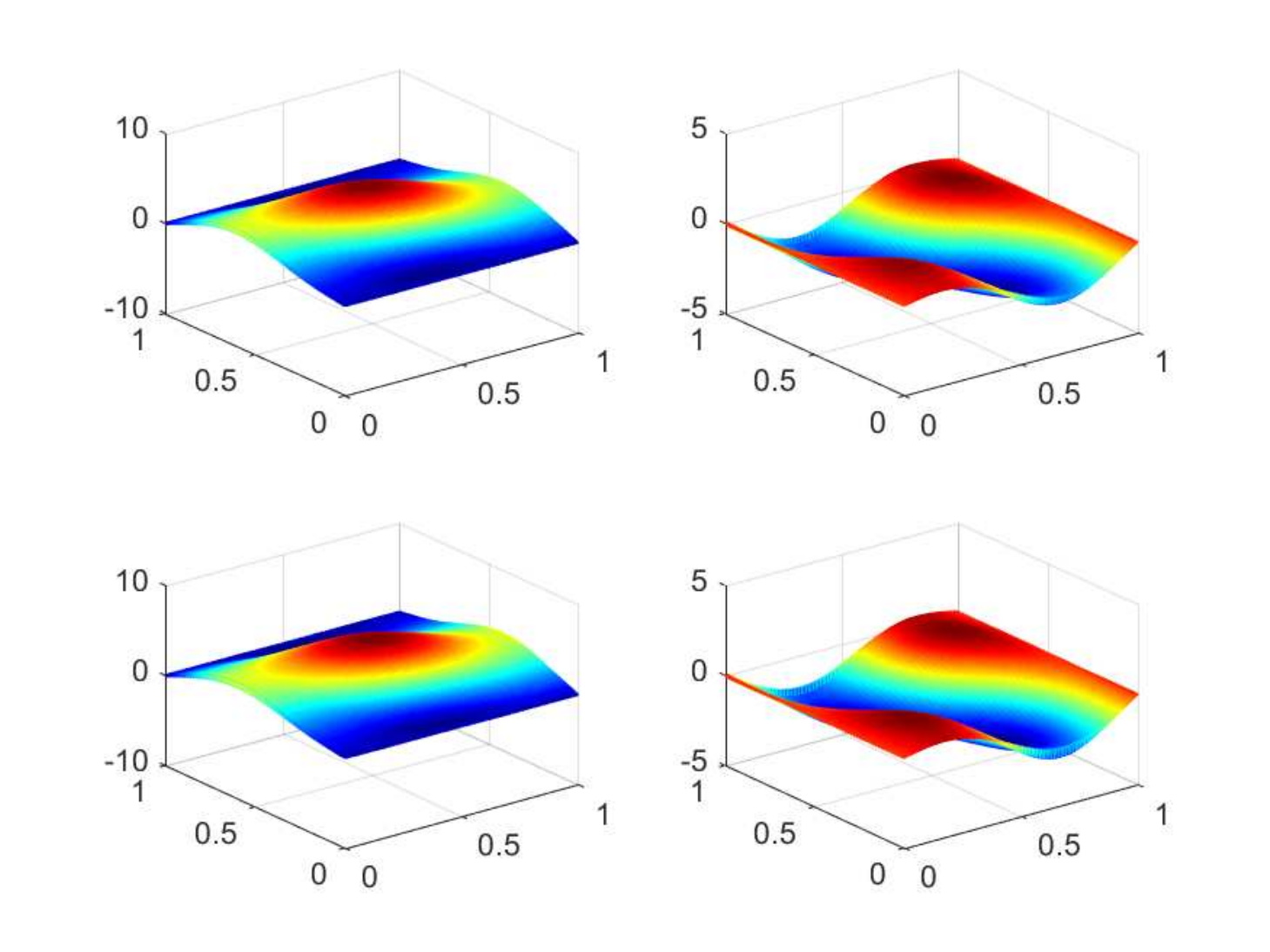}
	  \includegraphics[width=0.45\textwidth]{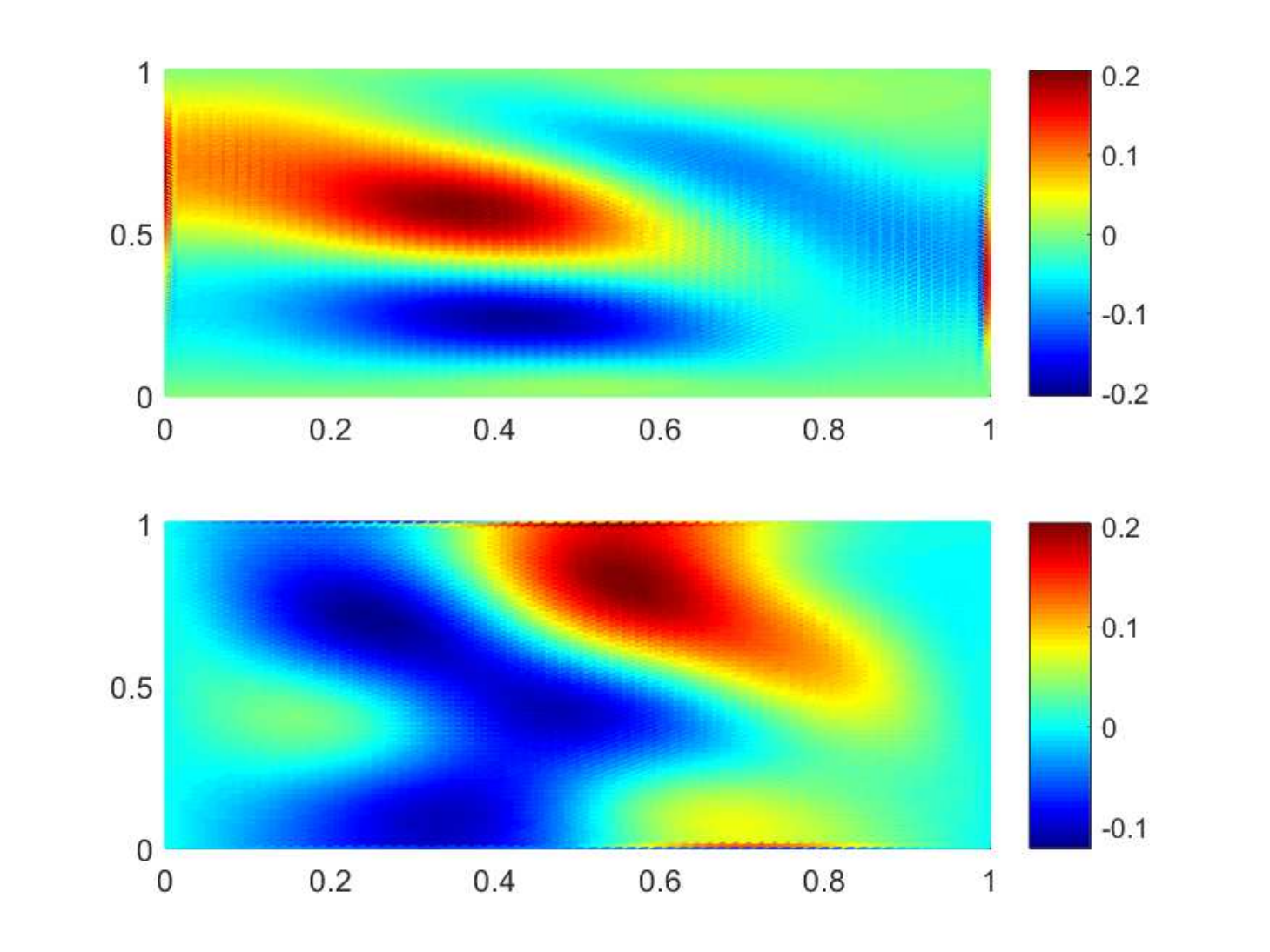}}
	\caption{Computed control $\bm{u}^{\Delta t}_h$ and exact control $\bm{u}$ (left, from top to bottom)  and the error $\bm{u}^{\Delta t}_h-\bm{u}$ (right) with $h=\frac{1}{2^7}$ and $\Delta t=\frac{1}{2^8}$ at $t=0.25$ for Example 2.}
	\label{controlEx2_1}
\end{figure}

\begin{figure}[htpb]
	\centering{
	\includegraphics[width=0.45\textwidth]{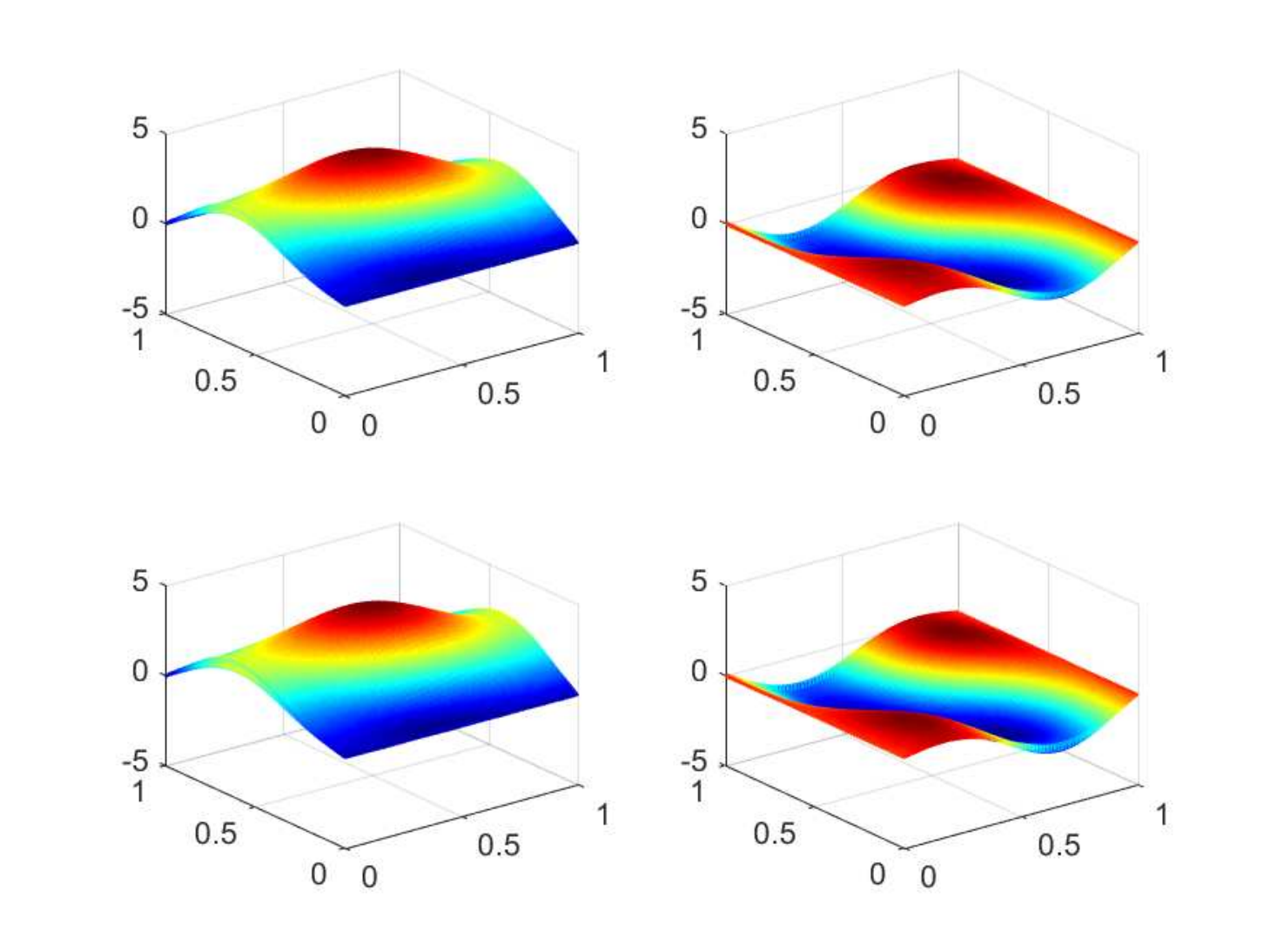}
		\includegraphics[width=0.45\textwidth]{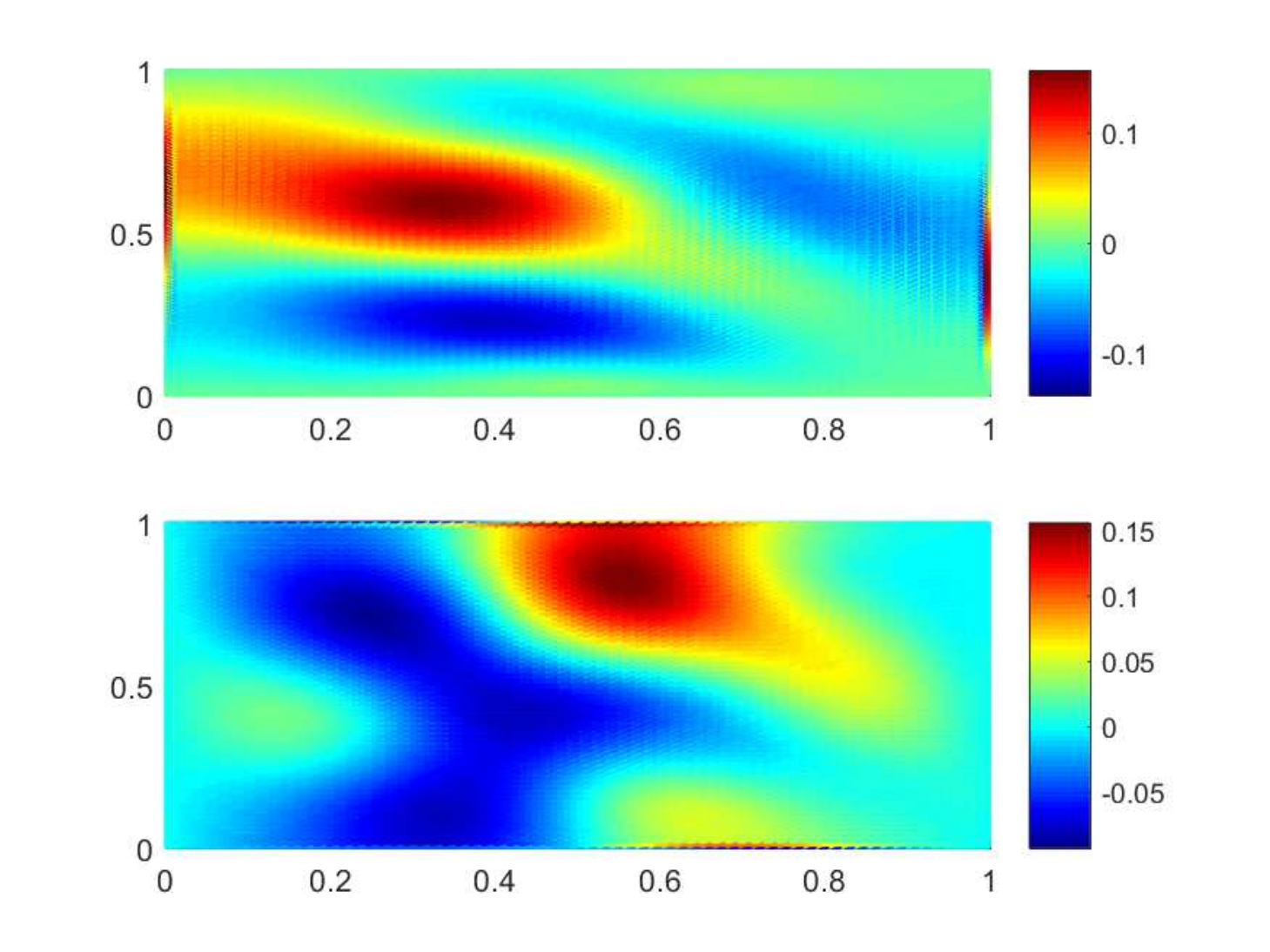}}
	\caption{Computed control $\bm{u}^{\Delta t}_h$ and exact control $\bm{u}$ (left, from top to bottom)  and the error $\bm{u}^{\Delta t}_h-\bm{u}$ (right) with $h=\frac{1}{2^7}$ and $\Delta t=\frac{1}{2^8}$ at $t=0.5$ for Example 2.}
	\label{controlEx2_2}
\end{figure}
\begin{figure}[htpb]
	\centering{
		\includegraphics[width=0.45\textwidth]{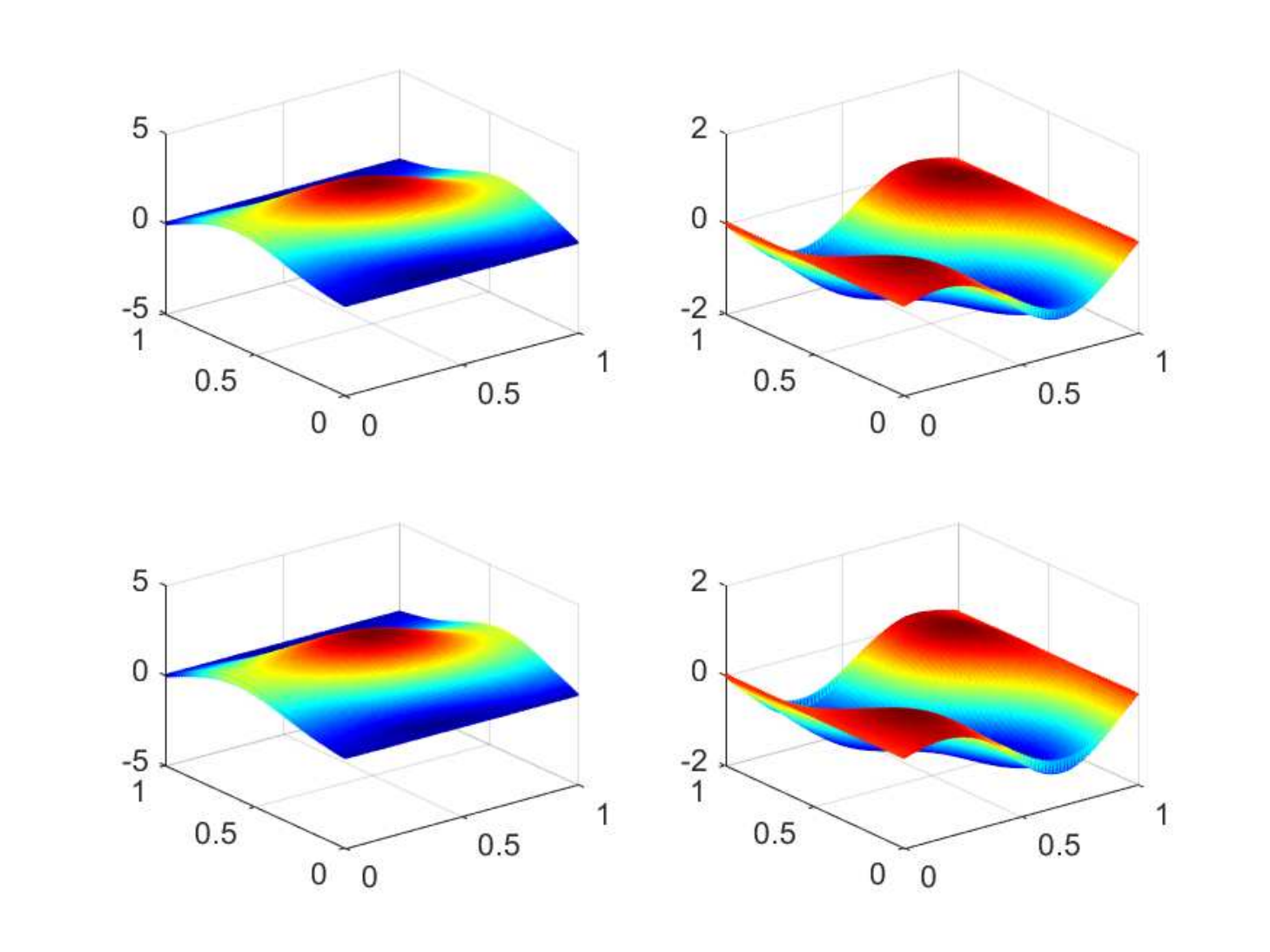}
		\includegraphics[width=0.45\textwidth]{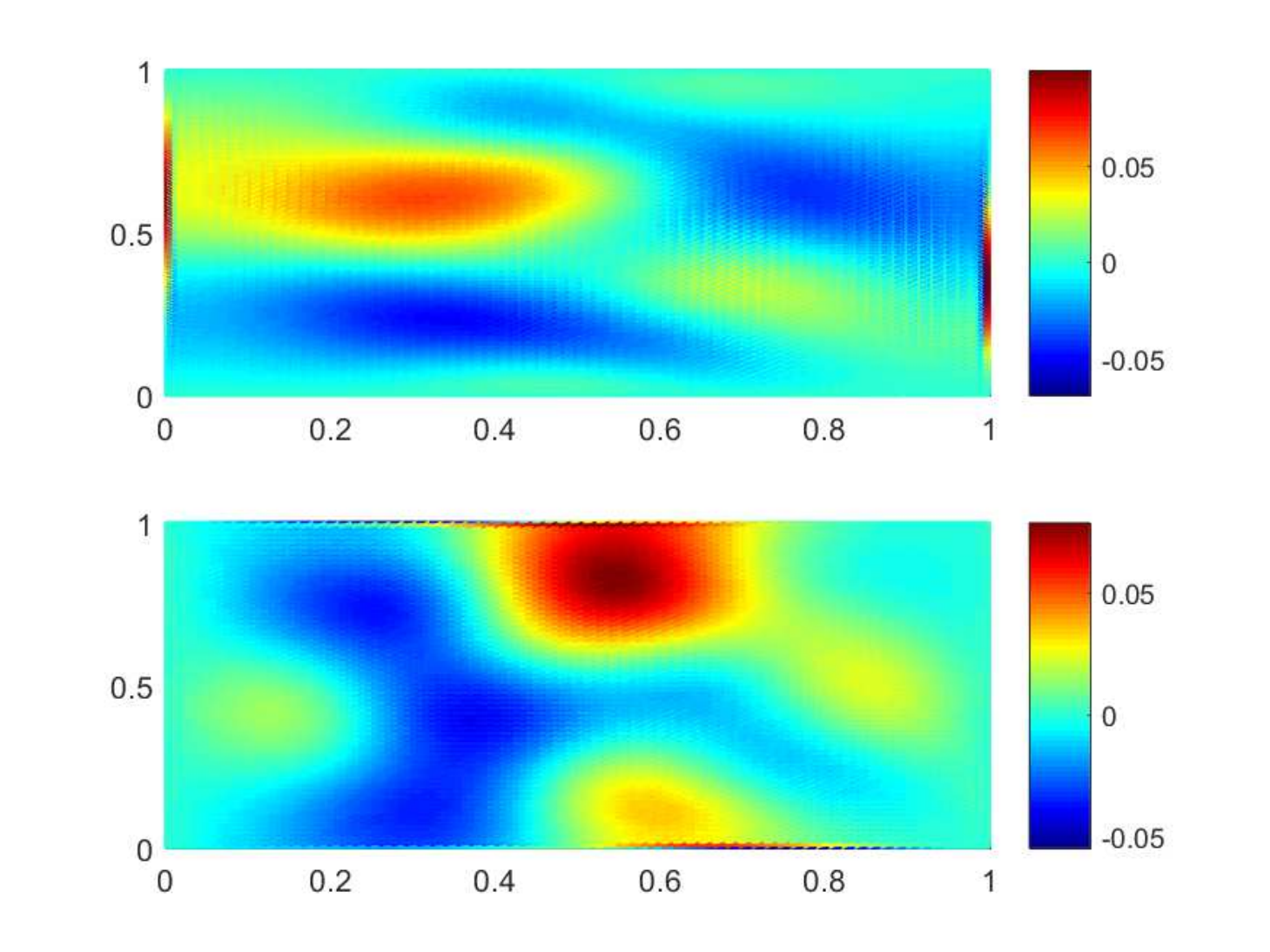}}
	\caption{Computed control $\bm{u}^{\Delta t}_h$ and exact control $\bm{u}$ (left, from top to bottom)  and the error $\bm{u}^{\Delta t}_h-\bm{u}$ (right) with $h=\frac{1}{2^7}$ and $\Delta t=\frac{1}{2^8}$ at $t=0.75$ for Example 2.}
	\label{controlEx2_3}
\end{figure}
\newpage
\section{Conclusion and Outlook}\label{se:conclusion}
We studied the bilinear control of an advection-reaction-diffusion system, where the control variable enters the model as a velocity field of the advection term. Mathematically, we proved the existence of optimal controls and derived the associated first-order optimality conditions. Computationally, the conjugate gradient (CG) method was suggested and its implementation is nontrivial. In particular, an additional divergence-free constraint on the control variable leads to a projection subproblem to compute the gradient; and the computation of a stepsize at each CG iteration requires solving the state equation repeatedly due to the nonlinear relation between the state and control variables. 
To resolve the above issues, we reformulated the gradient computation as a Stokes-type problem and proposed a fast preconditioned CG method to solve it. We also proposed an efficient inexactness strategy to determine the stepsize, which only requires the solution of one linear parabolic equation.  
An easily implementable nested
CG method was thus proposed. For the numerical discretization, we employed the standard piecewise linear finite element method and the Bercovier-Pironneau finite element method for the space discretizations of the bilinear optimal control and the Stokes-type problem, respectively, and a semi-implicit finite difference method for the time discretization. The resulting algorithm was shown to be
numerically efficient by some preliminary numerical experiments. 

 We focused in this paper on an advection-reaction-diffusion system controlled by a general form velocity field. In a real physical system, the velocity field may be
determined by some partial differential equations (PDEs), such as the Navier-Stokes equations. As a result, we meet some bilinear optimal control problems constrained by coupled PDE systems. Moreover, instead of (\ref{objective_functional}), one can also consider other types of objective functionals in the bilinear optimal control of an advection-reaction-diffusion system. For instance, one can incorporate $\iint_{Q}|\nabla \bm{v}|^2dxdt$ and $\iint_{Q}|\frac{\partial \bm{v}}{\partial t}|^2dxdt$ into the objective functional to promote that the optimal velocity field has the least rotation and is almost steady, respectively, which are essential in e.g., mixing enhancement for different flows \cite{liu2008}. All these problems are of practical interest but more challenging from algorithmic design perspectives, and they have not been well-addressed numerically in the literature. Our current work has laid a solid foundation for solving these problems and we leave them in the future.

\bibliographystyle{amsplain}
{\small
	
}
\end{document}